\newtheorem{theorem}{Theorem}
\newtheorem*{AL}{The Aizenman--Lebowitz Lemma}
\newtheorem{lemma}[theorem]{Lemma}
\newtheorem{proposition}[theorem]{Proposition}
\newtheorem{obs}[theorem]{Observation}
\theoremstyle{definition}
\newtheorem{defn}[theorem]{Definition}
\newtheorem{prob}[theorem]{Problem}
\numberwithin{theorem}{section}
\setlist[itemize]{leftmargin=1cm}
\setlist[enumerate]{leftmargin=1.2cm}
\renewcommand{\leq}{\leqslant}
\renewcommand{\geq}{\geqslant}
\renewcommand{\le}{\leqslant}
\renewcommand{\ge}{\geqslant}
\renewcommand{\to}{\rightarrow}
\let\epsilon\varepsilon
\let\eps\varepsilon
\def\eps{\varepsilon}
\def\le{\leqslant}
\def\ge{\geqslant}
\def\<{\langle}
\def\>{\rangle}
\newcommand{\ds}{\displaystyle}
\def\E{\mathbb{E}}
\def\Ex{\mathbb{E}}
\def\N{\mathbb{N}}
\def\P{\mathbb{P}}
\def\Pr{\mathbb{P}}
\def\H{\mathcal{H}}
\def\LL{\mathcal{L}}
\def\U{\mathcal{U}}
\def\Z{\mathbb{Z}}
\def\Exp{\mathrm{Exp}}
\def\Var{\mathrm{Var}}
\def\0{\mathbf{0}}
\def\k{\mathbf{k}}
\def\sh{\textup{short}}
\title[Nucleation and growth in two dimensions]{Nucleation and growth in two dimensions}
\author[B. Bollob\'as \and S. Griffiths \and R. Morris \and L.T. Rolla \and P.J. Smith]{B\'ela Bollob\'as \and Simon Griffiths \and Robert Morris \and \\ Leonardo Rolla \and Paul Smith}
\address{
B\'ela Bollob\'as \hfill\break
Department of Pure Mathematics and Mathematical Statistics, Wilberforce Road, Cambridge, CB3 0WA, UK, and Department of Mathematical Sciences, University of Memphis, Memphis, TN 38152, USA, and London Institute for Mathematical Sciences, 35a South Street, London, W1K 2XF, UK}
\email{b.bollobas@dpmms.cam.ac.uk}
\address{
    Simon Griffiths \hfill\break
    Departamento de Matem\'atica, PUC-Rio, Rua Marqu\^{e}s de S\~{a}o Vicente 225, G\'avea, 22451-900 Rio de Janeiro, Brasil
} 
\email{simon@mat.puc-rio.br}
  \address{
    Robert Morris\hfill\break
   IMPA, Estrada Dona Castorina 110, Jardim Bot\^anico, Rio de Janeiro, RJ, Brasil
 }
  \email{rob@impa.br}
 \address{
    Leonardo Rolla \hfill\break
    Departamento de Matem\'atica, Universidad de Buenos Aires, Ciudad Universitaria, 
    Capital Federal, 
    Argentina
 } 
 \email{leorolla@dm.uba.ar}
 \address{
 Paul Smith \hfill\break
 IMPA, Estrada Dona Castorina 110, Jardim Bot\^anico, Rio de Janeiro, RJ, Brasil}
\email{psmith@impa.br}
\thanks{
    This research was partially supported by: (BB) NSF grant DMS~1301614 and MULTIPLEX grant no.~317532; (SG) EPSRC grant EP/J019496/1, CNPq (Proc. 500016/2010-2 and Proc. 310656/2016-8), and a PUC-Rio Bolsa de incentivo \`a produtividade em ensino e pesquisa; (RM) CNPq (Proc.~479032/2012-2 and Proc.~303275/2013-8), FAPERJ (Proc.~201.598/2014), and ERC Starting Grant 680275 MALIG}
\begin{document}

\begin{abstract}
We consider a dynamical process on a graph $G$, in which vertices are infected (randomly) at a rate which depends on the number of their neighbours that are already infected. This model includes bootstrap percolation and first-passage percolation as its extreme points. We give a precise description of the evolution of this process on the graph $\Z^2$, significantly sharpening results of Dehghanpour and Schonmann. In particular, we determine the typical infection time up to a constant factor for almost all natural values of the parameters, and in a large range we obtain a stronger, sharp threshold. 
\end{abstract}

\maketitle

\thispagestyle{empty}
\enlargethispage{\baselineskip}

\section{Introduction}
Models of random growth on $\Z^d$ have been studied for many years, motivated by numerous applications, such as cell growth~\cite{Eden,Turing}, crystal formation~\cite{KS95} and thermodynamic ferromagnetic systems~\cite{CM13b,DS97b}. Two particularly well-studied examples of such models are bootstrap percolation (see, e.g.,~\cite{AL,BBDM,Hol,M17}) and first-passage percolation (see, e.g.,~\cite{ADH,BKS}); in the former, vertices become infected once they have at least $r$ already-infected neighbours, whereas in the latter vertices are infected at rate $1$ by each of their neighbours. 

In this paper we will study a particular family of growth processes on $\Z^2$ which interpolates between bootstrap percolation at one extreme, and first passage percolation at the other. Given parameters $n \ge 1$ and $1 \le k \le n$, each vertex $v \in \Z^2$ becomes infected randomly at rate $1/n$ if it has no infected neighbours, at rate $k/n$ if it has one infected neighbour, and at rate $1$ if it has two or more infected neighbours. (At the start all vertices are healthy, and infected vertices remain infected forever.) This model was first studied by Dehghanpour and Schonmann~\cite{DS97a}, who proved that the (random) time $\tau$ at which the origin is infected satisfies
\begin{equation}\label{eq:DSbound}
\tau \, = \, \left\{
\begin{aligned}
& \; \left( \ds\frac{n}{k} \right)^{1 + o(1)} & \textup{ if } k \le \sqrt{n}\\
& \left( \ds\frac{n^2}{k} \right)^{1/3 + o(1)} & \textup{ if } k \ge \sqrt{n}
\end{aligned} \right. 
\end{equation}
with high probability as $n \to \infty$. (We will informally refer to $\tau$ as the `relaxation time'.) They also proved similar bounds in higher dimensions, and in~\cite{DS97b} applied their techniques to study the metastable behavior of the kinetic Ising model with a small magnetic field and vanishing temperature. Corresponding results were obtained for models with $r$ different infection rates (depending on the number of infected neighbours, see Section~\ref{sec:higher:dims}) by Cerf and Manzo~\cite{CM13a,CM13b}, who adapted techniques from the study of bootstrap percolation, in particular those of~\cite{CC,CM}.

In this paper we will study the model of Dehghanpour and Schonmann in greater detail. In particular, we will determine $\tau$ up to a constant factor for almost all natural functions $k = k(n)$, and in a large range we will moreover determine a sharp threshold for $\tau$. Our main theorem is as follows. 
 
\begin{theorem}\label{thm:NG2d}
The following bounds hold with high probability as $n \to \infty$:
\begin{enumerate}
\item[$(a)$] If $k \ll \log n$ then
\[
\tau = \left(\frac{\pi^2}{18}+o(1)\right)\frac{n}{\log n}.
\]
\item[$(b)$] If $\log n \ll k \ll \sqrt{n}(\log n)^2$ then
\[
\tau = \left( \frac{1}{4} + o(1) \right) \frac{n}{k} \log \bigg( \frac{k}{\log n} \bigg).
\]
\item[$(c)$] If $\sqrt{n}(\log n)^2 \ll k \ll n$ then
\[
\tau = \Theta\left(\frac{n^2}{k \log (n/k)}\right)^{1/3}.
\]
\end{enumerate}
\end{theorem}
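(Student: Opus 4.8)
The proof splits into six pieces --- a matching upper and lower bound in each of the three regimes --- but all six rest on the same geometric picture, in which the infection spreads via \emph{droplets} (roughly square infected regions) that nucleate, grow, and coalesce, and on one common tool: an \emph{Aizenman--Lebowitz-type hierarchy} of nested internally-infected droplets. A droplet of side length $\ell$ typically grows by having one of its $\Theta(\ell)$ boundary sites --- each with exactly one infected neighbour, hence infected at rate $k/n$ --- become infected, after which the seeded side fills in quickly through the rate-$1$, two-neighbour mechanism. When $k\ll\log n$ this seeding is too slow to matter, and the origin is infected instead through a two-neighbour bootstrap cascade fed by the spontaneous (rate $1/n$) infections, which by time $t$ form a field of density $\approx t/n$; the critical droplet length for such a cascade, $e^{\Theta(n/t)}$, becomes polynomial in $n$ when $t=\Theta(n/\log n)$, and matching the sharp two-neighbour bootstrap constant of Holroyd gives the coefficient $\pi^2/18$. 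When $\log n\ll k\ll\sqrt n(\log n)^2$ a single droplet grows essentially deterministically, each doubling of its side costing time $\Theta(n/k)$, so reaching scale $\ell$ costs time of order $\frac nk\log\ell$; balancing the time a droplet needs to reach the origin against the rate-$1/n$ cost of nucleating it soon enough and near enough pins down both the relevant scale $k/\log n$ and the constant $\tfrac14$. When $k\gg\sqrt n(\log n)^2$ growth is instead limited by the time to fill a long side, itself a one-dimensional nucleation-and-growth problem, and balancing the resulting sub-exponential droplet growth against the density of nucleations within reach of the origin produces the exponent $1/3$ together with the logarithmic correction.

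For the upper bounds I would, in each regime, exhibit an explicit favourable scenario and show it occurs with high probability, exploiting monotonicity of the process throughout. In regimes (b) and (c) the scenario is essentially a single hierarchy: one conditions on an early nucleation close to the origin and controls the successive growth steps as nearly independent exponential waiting times whose sum concentrates around its mean; a union bound over the $n^{O(1)}$ relevant space-time nucleation sites then shows that \emph{some} hierarchy reaches the origin by the stated time. In regime (a) I would instead run a multiscale argument in the spirit of Holroyd's proof: show that the spontaneous infections present at time $(\frac{\pi^2}{18}+\eps)\frac n{\log n}$ stochastically dominate an i.i.d.\ field whose density exceeds the two-neighbour critical probability at the appropriate scale, and that the induced closure --- only accelerated by the rate-$k/n$ infections --- engulfs the origin within a further $o(n/\log n)$ time, since the dynamic closure of a spanning configuration proceeds through a telescoping sequence of growing rectangles whose total growth time is of lower order.

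For the lower bounds I would use the Aizenman--Lebowitz lemma: if the origin is infected by time $t$, then for every scale $\ell$ below the diameter of the region there is a connected droplet of side $\Theta(\ell)$ that became internally infected --- using only nucleations and infections inside it --- by time $t$. Iterating yields a full hierarchy, and the probability that any admissible hierarchy exists is bounded by summing, over all such hierarchies, the product of the probabilities that the individual growth steps occur in time; this sum has the same shape as the optimisation appearing in the upper bound and is $o(1)$ for $t$ below the claimed value. In regime (a) one needs in addition that, for $k\ll\log n$, the rate-$k/n$ infections cannot lower the two-neighbour bootstrap threshold by more than a $1+o(1)$ factor; in (b) the sharp constant requires showing that no combination of nucleations and merging droplets improves on the single-hierarchy trajectory by more than $1+o(1)$; in (c) one needs matching lower bounds on both the side-filling time and the number of nucleations required.

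The main obstacle is obtaining the \emph{sharp} constants $\pi^2/18$ and $\tfrac14$ in (a) and (b): the upper and lower bounds must be forced to agree to within a factor $1+o(1)$. In (a) this demands a dynamic refinement of Holroyd's variational analysis of two-neighbour bootstrap percolation, together with a proof that the one-neighbour acceleration is genuinely negligible in this range; in (b) it demands a tight analysis of the optimal nucleation-and-growth trajectory, ruling out any gain from coalescence or from atypically fast growth at an intermediate scale. The secondary obstacle, in regime (c), is correctly identifying the dominant growth mechanism at every scale --- in particular locating the crossover from seeding-limited to filling-limited growth near scale $n/(k\log(n/k))$ --- and controlling the spatial correlations well enough to make the $O(\cdot)$ and $\Omega(\cdot)$ bounds match.
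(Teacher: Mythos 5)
Your description of the lower bounds and of regime~$(a)$ is roughly aligned with the paper: for $(a)$ the upper bound really does reduce to Holroyd's theorem via stochastic domination plus a bootstrap-speed estimate, and the lower bound in $(a)$ is indeed a ``dynamic'' version of Holroyd's hierarchy argument in which one shows the $1$-neighbour infections contribute only $o(\log n)$ additional crossed rows/columns. For $(b)$ and $(c)$ the paper's lower bounds are actually lighter than what you describe --- rather than a full hierarchy, one couples with a (random or ``generous'') rectangles process, invokes an Aizenman--Lebowitz lemma at just one or two carefully chosen scales, and bounds the probability that any single rectangle of that size is internally filled; but your hierarchy version would, with effort, also work.

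The genuine gap is in your \emph{upper} bounds for regimes $(b)$ and $(c)$. You describe the favourable scenario as ``a single hierarchy: one conditions on an early nucleation close to the origin and controls the successive growth steps,'' i.e.\ a single droplet that grows until it engulfs the origin. That picture does not give the stated answer. A self-consistent single-droplet calculation in $(b)$ reads: the nearest nucleation by time $t$ is at distance $d\approx\sqrt{n/t}$, and by Theorem~\ref{thm:acc} it needs a further $\approx\frac{n}{4k}\log(n/t)$ to reach side $d$; solving $t\approx\frac{n}{4k}\log(n/t)$ gives, e.g.\ for $k=(\log n)^{1+\eps}$, a constant factor $\frac{1+\eps}{\eps}$ larger than $\frac{1}{4}\log(k/\log n)$. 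In $(c)$ the mismatch is sharper: at terminal velocity $\sqrt{k/n}$, a droplet covers distance $\sqrt{n/t}$ in time $(n^2/k)^{1/3}$, which loses exactly the factor $(\log(n/k))^{1/3}$ that Theorem~\ref{thm:NG2d}$(c)$ improves over Dehghanpour--Schonmann. The missing ingredient is the final \emph{bootstrap coalescence} phase. In the paper's upper bound one tiles a box of mesoscopic side $M$ by $m\times m$ squares with $m$ chosen so that the probability a given square receives a nucleation by time $\eps t$ is $\gtrsim C/\log(M/m)$; by Lemma~\ref{lem:acc:upper:fill} (resp.\ Lemmas~\ref{lem:acc:upper} and~\ref{lem:tv:upper}) each such nucleation fills its square in time $(\frac12+o(1))\frac nk\log m$ (resp.\ $O(t)$); and then Lemma~\ref{lem:boot:upper:weak} bootstrap-percolates the infected squares across $S(M)$ in time $o(t)$. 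In regime~$(b)$ this means a droplet only needs to reach side $m\approx\sqrt{k/(\log n\log m)}$, and since $\log m=(\frac12+o(1))\log(k/\log n)$, the factor $\frac12$ in the growth law becomes the $\frac14$ in the answer; in regime~$(c)$, $m\approx t\sqrt{k/n}$ is a factor $\sqrt{\log(n/k)}$ below $\sqrt{n/t}$, which is precisely the source of the logarithm. Without this coalescence step, a single-droplet (or single-hierarchy) union bound cannot do better than the second moment of droplets that reach the origin unaided, and that gives the weaker bounds above, not the sharp constants you state.

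A smaller point: the phrase ``a union bound over the $n^{O(1)}$ relevant space-time nucleation sites then shows that some hierarchy reaches the origin'' does not parse as a high-probability statement --- a union bound only bounds a failure probability from above. To prove that the origin is infected whp you need either the density/bootstrap argument the paper uses, or a genuine second-moment argument, not a first-moment count of favourable hierarchies.
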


The bounds above are obtained via a structural description of the nucleation and growth process. In regime~$(a)$, the behaviour of the process is similar to that of 2-neighbour bootstrap percolation (see Section~\ref{sec:boot:intro}), since most nucleations do not have time to grow at all by time $\tau$, and the threshold is the same as that determined by Holroyd~\cite{Hol} in the case $k = 1$. In regime~$(b)$ the behaviour changes significantly, and individual nucleations grow (at an accelerating rate) to size almost $\sqrt{k}$ before combining together in a final (and extremely rapid) `bootstrap-like' phase. In regime~$(c)$ the behaviour changes again, as individual droplets reach their `terminal velocity' (see Section~\ref{sec:growth}) before meeting one another. 

We will give a more detailed description of the behaviour of the process in Section~\ref{sec:outline}, together with an outline of the proof. In Sections~\ref{sec:AR} and~\ref{sec:TV} we prove various bounds on the growth of a single droplet, and in Section~\ref{upper:sec} we deduce the (relatively straightforward) upper bounds in Theorem~\ref{thm:NG2d}. In Sections~\ref{sec:lower:boot},~\ref{sec:lowerAC} and~\ref{sec:lowerTV} we prove our lower bounds on $\tau$, and in Section~\ref{sec:problems} we discuss open problems, including more general update rules and higher dimensions.

\section{Outline of the proof}\label{sec:outline}

In this section we will describe the qualitative behaviour of the nucleation and growth process that lies behind the quantitative bounds in Theorem~\ref{thm:NG2d}. As a warm-up, let us consider the 1-dimensional case, in which vertices of $\Z$ are infected at rate $1$ or $1/n$ depending on whether they have an infected neighbour or not. After time $\Theta(\sqrt{n})$, a nucleation will appear at distance $d = \Theta(\sqrt{n})$ from the origin; this infection will then spread sideways at rate $1$, reaching the origin after time $d \pm O(\sqrt{d})$. It follows easily that $\tau = \Theta(\sqrt{n})$ with high probability,\footnote{More precisely, $\ds\lim_{C \to \infty} \lim_{n \to \infty} \Pr\big( \tau \ge C \sqrt{n} \big) = \ds\lim_{c \to 0} \lim_{n \to \infty}  \Pr\big( \tau \le c\sqrt{n} \big) = 0$.} and moreover that there is no sharp threshold for $\tau$. 

In two dimensions the situation is substantially more complicated, so let us begin by considering the growth of a single nucleation, which produces (after some time) a (roughly) square `droplet' growing around it. The growth of the droplet is controlled by iterated 1-dimensional processes on its sides; initially the growth is slow, since it takes (expected) time $n/km$ for a new infection to appear on a given side of an $m \times m$ square, but it accelerates as the droplet grows. When $m = \Theta(\sqrt{n/k})$ the droplet stops accelerating and reaches its `terminal velocity', at which it takes time $\Theta(\sqrt{n/k})$ to grow one step in each direction, cf. the 1-dimensional setting.   

The main component missing from the above heuristic is the interaction between different droplets. This is most important (and obvious) when $k \ll \log n$, when the process behaves almost exactly like bootstrap percolation (see Section~\ref{sec:boot:intro}), and nucleations are likely to be swallowed by a large `critical droplet' before they have time to grow on their own. Above $k = \Theta(\log n)$ this behaviour changes, and the rate of growth of each individual nucleation becomes important; however, the process still ends with a significant `bootstrap phase', in which various small droplets combine to quickly infect the origin. This bootstrap phase only reduces $\tau$ by (at most) a small constant factor in regime~$(b)$; in regime~$(c)$, on the other hand, it saves a factor of $(\log (n/k))^{1/3}$. It is only once $k = \Omega(n)$ that the first passage percolation properties of the process predominate, and our techniques break down.

We will next discuss in more detail the two key dynamics discussed above: the growth of a droplet from a single nucleation, and bootstrap percolation.

\subsection{Growth from a single nucleation}\label{sec:growth}

Consider the variant of our nucleation and growth model in which the only nucleation occurs at time zero, at the origin, and a droplet grows according to the usual $1$- and $2$-neighbour rates (i.e., $k/n$ and $1$). This model was introduced and studied by Kesten and Schonmann~\cite{KS95}, who determined the limiting behaviour of the infected droplet. In order to prove Theorem~\ref{thm:NG2d} we will require some rather more detailed information about the growth, both during the `accelerating phase' and the `terminal velocity phase'. 

Let $S(m)\subset\Z^2$ denote the square centred on the origin with $2m+1$ sites on each side, and let $T^-(m)$ and $T^+(m)$ (respectively) denote the time at which the first site of $\Z^2 \setminus S(m-1)$ is infected, and the first time at which all of $S(m)$ is infected, in the Kesten--Schonmann process. The following theorem  describes the growth of a droplet in the accelerating phase.

\begin{theorem}\label{thm:acc} 
Fix $\delta > 0$ and let $n \in \N$. For every $1 \le k = k(n) \ll n$ and every $1 \ll m = m(n) \ll \sqrt{(n / k) \log (n/k)}$, we have
\[
\left(\frac{1}{2} - \delta\right) \frac{n}{k} \log{m} \, \le \, T^{-}(m) \, \le \, T^{+}(m) \, \le \, \left(\frac{1}{2} + \delta\right) \frac{n}{k} \log{m}
\]
with high probability as $n \to \infty$. 
\end{theorem}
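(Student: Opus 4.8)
The plan is to analyze the growth of the single droplet by tracking the one-dimensional processes that run along its four sides, and to show that the time to grow from an $m \times m$ square to roughly a $2m \times 2m$ square is, up to a $(1+o(1))$ factor, $\frac{n}{2k}\log 2 - \frac{n}{2k}\log 1 = \Theta(n/k)$... more precisely, that the time to advance one side from distance $j$ to distance $j+1$ is concentrated around $\frac{n}{2kj}$, so that summing a telescoping series $\sum_{j \le m} \frac{n}{2kj} \sim \frac{n}{2k}\log m$ gives the claimed answer. I would first record the basic mechanism: once a (roughly) square droplet of radius $j$ is present, a new infection on a fixed side appears at rate $\approx kj/n$ (each of the $\approx 2j$ boundary sites adjacent to the droplet flips at rate $k/n$, and we need the nearest such site on a given side), after which the $2$-neighbour rule lets it zip along that side in time $O(j)$, which is negligible compared with $n/(kj)$ precisely in the range $m \ll \sqrt{(n/k)\log(n/k)}$ — this is exactly where the hypothesis on $m$ is used, since we need $\sum_{j\le m} j = O(m^2) \ll (n/k)\log m \cdot$ (something), i.e. the total ``filling in'' time stays lower order.

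For the \emph{upper bound} $T^+(m) \le (\tfrac12+\delta)\tfrac{n}{k}\log m$, I would couple the real process from below by a cleaner auxiliary process in which, at each stage $j$, we wait for the \emph{first} of the four sides to receive a new nucleation (rate $\approx 4kj/n$, so expected wait $\approx n/(4kj)$) and then, pessimistically, wait an extra deterministic $O(j)$ to fill the side and also force the other three sides forward — actually it is cleaner to grow the bounding square one layer at a time and show each layer is completed in time $O(j) + \mathrm{Exp}(n/(kj))$-ish, then sum. The sum of expectations is $(1+o(1))\frac{n}{4k}\log m$ if we only wait for one side, but since a droplet needs all four sides to advance to contain $S(m)$, the honest bound is a constant times this; getting the constant to be exactly $\frac12$ requires being careful that the four sides advance essentially independently and that advancing to radius $m$ in \emph{all} directions costs $\sum_j \frac{n}{2kj}$ — I would handle concentration of the sum via a second-moment / Chebyshev argument on the sum of nearly-independent geometric (exponential) waiting times, whose variance is $O\big(\sum_j (n/kj)^2\big) = O((n/k)^2)$, negligible against the square of the mean $\big(\frac{n}{2k}\log m\big)^2$ since $m \to \infty$.

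For the \emph{lower bound} $T^-(m) \ge (\tfrac12-\delta)\tfrac{n}{k}\log m$, I would argue that to infect any site outside $S(m-1)$ the droplet must have passed through every intermediate radius $j$, and at radius $j$ the wait for the next infection that extends the droplet is stochastically at least an exponential of rate $\le C k j / n$ (crudely bounding the number of sites that could extend the current infected region in one of the relevant directions by $O(j)$; even generously allowing all $O(j)$ perimeter sites to contribute gives rate $O(kj/n)$). Summing these independent-ish lower bounds and again applying concentration gives $\ge (1-o(1))\,c\,\frac{n}{k}\log m$; pinning the constant to $\frac12$ means showing the \emph{effective} number of sites whose infection genuinely advances the droplet toward radius $m$ is $(2+o(1))j$ rather than a larger constant times $j$, i.e. that ``useful'' flips happen at rate $(2+o(1))kj/n$ in the limit. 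I expect this constant-chasing — reconciling the upper and lower constants at exactly $\tfrac12$, i.e. correctly accounting for how the four growing sides and the two-sided growth along each side combine so that the per-step rate is $(2j)(k/n) \cdot 2 = $ the right thing — to be the main obstacle; the order-of-magnitude bounds and the concentration (which is easy because $\log m \to \infty$ swamps the variance) are routine, and the only genuinely delicate input is verifying that for $m \ll \sqrt{(n/k)\log(n/k)}$ the $O(m^2)$ total side-filling time is indeed $o\big(\frac{n}{k}\log m\big)$, which I would check by noting $m^2 \ll (n/k)\log(n/k)$ and $\log(n/k) = O(\log m \cdot \text{stuff})$ in the relevant subrange, splitting off the (negligibly few) largest scales if necessary.
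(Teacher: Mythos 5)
Your overall strategy (layer‑by‑layer growth, sum of exponential waiting times, concentration because $\log m\to\infty$ swamps the variance) matches the paper's, and your Chebyshev concentration would be adequate for the theorem as stated (the paper instead uses Freedman's inequality to prove the stronger, quantitative Lemmas~\ref{lem:acc:upper} and~\ref{lem:acc:lower}, which it needs later). But there is a genuine gap at exactly the point you flag as ``the main obstacle'': your radius/four‑sides bookkeeping does not pin down the constant $\tfrac12$, and your rate computations are internally inconsistent ($kj/n$ vs.\ $2kj/n$ per side; $4kj/n$ vs.\ $8kj/n$ for ``first of four''). The missing idea is to track the \emph{semi‑perimeter} $i$ of the (rectangular closure of the) droplet rather than its radius. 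A rectangle with semi‑perimeter $i$ has \emph{exactly} $2i$ external sites with precisely one infected neighbour, so the hazard rate for the next $1$‑neighbour infection is exactly $2ki/n$; and a single such infection followed by a $2$‑neighbour sweep along that side increases the semi‑perimeter by exactly $1$ (see Observation~\ref{obs:basiccoupling}). Hence $T_m$ is coupled between $\sum_{i\ge 2}Y_i$ and $\sum_{i\ge 2}Y_i+\sum Z_i$ with $Y_i\sim\Exp(n/2ki)$, and $\sum_{i=2}^m n/(2ki)=(1+o(1))\tfrac{n}{2k}\log m$ gives the constant with no constant‑chasing, regardless of whether the rectangle stays square. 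Your worry about ``how the four growing sides combine'' dissolves entirely in this parametrization.

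Your lower‑bound sketch has a related gap: parametrizing by ``radius $j$'' is not well defined for a thin rectangle, and bounding the extension rate by ``$O(j)$ perimeter sites'' with an unspecified constant $C$ cannot give a matching $\tfrac12$. The paper's fix (Observation~\ref{obs:basiccoupling:lower}) is again the semi‑perimeter: pass to the modified process in which $2$‑neighbour infections are instantaneous — this can only speed things up and, crucially, $2$‑neighbour infections never increase the semi‑perimeter of the rectangular closure — so the semi‑perimeter increases by $1$ at rate exactly $2ki/n$, and $T^{-}(m)\ge\sum_{i\ge 2}Y_i$ follows. Finally, your verification that the $O(m^2)$ filling‑in time is $o\big(\tfrac{n}{k}\log m\big)$ is in the right spirit, but note that the hypothesis is $m\ll\sqrt{(n/k)\log(n/k)}$, not $m\ll\sqrt{(n/k)\log m}$; one should check directly (as the paper does, implicitly, in the sketch after Lemma~\ref{lem:Freedman}) that $m\ll\sqrt{(n/k)\log(n/k)}$ already forces $m^2\ll\tfrac{n}{k}\log m$, rather than appealing vaguely to ``$\log(n/k)=O(\log m\cdot\text{stuff})$''.
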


Theorem~\ref{thm:acc} will follow immediately from some stronger and more technical results, which we will state and prove in Section~\ref{sec:AR}, and which we will need in order to prove Theorem~\ref{thm:NG2d}. The long-term growth of the Kesten--Schonmann droplet is described by the following theorem, a proof of which can be found in~\cite{Simon}. (We refer the interested reader also to the papers~\cite{RSSV} and~\cite{SSV}, where similar results were proved in a closely related setting.) We will not need such strong bounds, however, and we will prove the results we need from first principles, see Section~\ref{sec:TV}. 

\begin{theorem}\label{thm:tv} 
Fix $\delta > 0$ and let $n \in \N$. For every $1 \le k = k(n) \ll n$ and every $m = m(n) \gg \sqrt{ n/k } \log(n/k)$, we have
\[
\bigg( \frac{1}{\sqrt{2}} -  \delta \bigg) m \sqrt{ \frac{n}{k} } \, \le\, T^{-}(m) \, \le \, T^{+}(m)\,\le \, \bigg( \frac{1}{\sqrt{2}} + \delta \bigg) m \sqrt{ \frac{n}{k} }
\]
with high probability as $n \to \infty$. 
\end{theorem}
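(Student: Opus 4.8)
The plan is to reduce the two-dimensional droplet growth, in the terminal-velocity regime $m \gg \sqrt{n/k}\log(n/k)$, to a quasi-one-dimensional first-passage-type problem along the sides of the droplet, exactly as in the heuristic of Section~\ref{sec:growth}. First I would set $L = L(n) = C\sqrt{(n/k)\log(n/k)}$ for a large constant $C$, and appeal to Theorem~\ref{thm:acc} to get that by time $O\big((n/k)\log L\big) = o\big(m\sqrt{n/k}\big)$ the infected region already contains a square droplet of side $\Theta(L)$ with high probability; this is negligible on the time scale $m\sqrt{n/k}$, so it suffices to analyse growth \emph{from} a droplet of side $\asymp L$ up to side $m$. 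The key point is that once a side of the droplet has length $\ell \ge L$, the time for the droplet to advance one step on that side is dominated by the time for a \emph{single} new nucleation to appear somewhere along that side at rate $k/n$ per site — giving mean $\asymp n/(k\ell)$ — because, conditional on such a nucleation, the 1-neighbour rule lets it spread sideways at rate $1$ and fill the remaining $\le \ell$ sites of that row in time $O(\ell) = o(n/(k\ell))$ when $\ell \gg \sqrt{n/k}$. Wait — one must be a little careful: the sideways spread must catch up before the \emph{next} row's nucleation forces an awkward geometry, but since $\ell \ge L = C\sqrt{(n/k)\log(n/k)}$ we have $\ell \gg \sqrt{n/k} \gg $ (the typical gap between the $O(1)$ relevant nucleations), so the droplet stays approximately square and the per-step times $X_i$ are, up to lower-order corrections, independent with $\Ex[X_i] \asymp n/(k \cdot i)$ — no, more precisely, advancing from side $2i+1$ to side $2i+3$ requires two such steps (top and bottom, or left and right), each of mean $\tfrac{1}{2}\cdot\tfrac{n}{k}\cdot\tfrac{1}{2i}\cdot(1+o(1))$ summed appropriately.

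The second step is the summation and concentration. Writing $T^{\pm}(m) \approx \sum_{i=L}^{m} X_i$ with $X_i$ the time to grow from side $i$ to side $i+1$, the heuristic mean is $\sum_{i=L}^m \frac{c\,n}{k\,i}$, but this would give a $\log m$ factor, not the linear-in-$m$ behaviour claimed — so in fact the correct picture in this regime is the \emph{opposite}: the droplet has reached terminal velocity, meaning the per-step time has \emph{stopped} decreasing and saturates at $\Theta(\sqrt{n/k})$ once $i \gtrsim \sqrt{n/k}$, because at that point the number of sites on a side is $\asymp \sqrt{n/k}$ and a new nucleation appears on it in time $\asymp \frac{n}{k}\cdot\frac{1}{\sqrt{n/k}} = \sqrt{n/k}$, which is also the time for sideways filling — the two time scales balance, which is precisely the definition of terminal velocity. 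Hence for $i \ge \sqrt{n/k}$ each step takes time $\big(\tfrac{1}{\sqrt 2}+o(1)\big)\sqrt{n/k}$ (the constant $1/\sqrt2$ coming from the precise balance of the Poisson clocks on a side of the optimal length, to be pinned down by a variational computation over the side length at which nucleation-time plus fill-time is minimised), and summing over the $\approx m$ values of $i$ from $\sqrt{n/k}$ to $m$ gives $\big(\tfrac{1}{\sqrt2}+o(1)\big)m\sqrt{n/k}$, with the contribution from $i \le \sqrt{n/k}$ being $O\big(\sqrt{n/k}\cdot\sqrt{n/k}\big) = O(n/k) = o\big(m\sqrt{n/k}\big)$. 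Concentration then follows because $X_i$ for $i$ in dyadic blocks are sums of (near-)independent exponential-type variables with controlled means and variances, so a routine second-moment / Chernoff argument over $O(\log m)$ blocks gives the $(1\pm\delta)$ window with high probability; the upper bound on $T^+(m)$ (filling the whole square, not just reaching its boundary) costs only an extra additive $O(m)$ from a final sideways sweep, which is again $o\big(m\sqrt{n/k}\big)$.

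For the matching lower bound on $T^-(m)$ I would argue that growth cannot be faster than terminal velocity: a weight/energy argument (as in the bootstrap-percolation literature, cf.~\cite{CC,CM}) shows that to advance the droplet one step on a side of current length $\ell$ the process must \emph{either} wait for a rate-$k/n$ nucleation on that side (time $\gtrsim n/(k\ell)$ in expectation, and not too much less with high probability by a union bound over the $O(\ell)$ sites and the $O(m)$ steps) \emph{or} wait for the slower rate-$1/n$ spontaneous infections, and in either case, optimising over $\ell \le $ (droplet size), the per-step cost is $\ge \big(\tfrac{1}{\sqrt2}-o(1)\big)\sqrt{n/k}$; summing over the $\ge m - o(m)$ necessary steps gives the claim. \textbf{The main obstacle} I anticipate is making the "droplet stays approximately square and steps are approximately independent" reduction fully rigorous — in particular controlling the rare events where several nucleations land on one side before sideways filling completes, producing a temporarily elongated droplet whose longer side then grows faster and could in principle speed things up by a constant factor, which would spoil the sharp constant $1/\sqrt2$. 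Handling this requires a careful coupling showing such excursions self-correct on a time scale $O(\sqrt{n/k}) = o\big(m\sqrt{n/k}\big)$ and do not accumulate; fortunately, as the excerpt notes, the \emph{sharp} version of this is already available in~\cite{Simon}, so here we only need the cruder $(1\pm\delta)$ statement, for which a softer coupling — sandwiching the true droplet between two deterministic-speed square droplets — should suffice. I would carry out: (1) the reduction to side-wise growth; (2) the terminal-velocity per-step estimate with its constant; (3) summation + concentration for the upper bound; (4) the weight-function lower bound; (5) assembling the window.
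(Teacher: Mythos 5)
The paper does not actually prove Theorem~\ref{thm:tv}. It is stated for context, and the proof is attributed to Griffiths' unpublished manuscript~\cite{Simon}; the text immediately after the statement says that the authors ``will not need such strong bounds'' and will prove only what they need ``from first principles, see Section~\ref{sec:TV}.'' What Section~\ref{sec:TV} contains is the constant-factor version: Lemma~\ref{lem:tv:upper} gives $T^+(m)\le Cm\sqrt{n/k}$ via a row-by-row coupling, and Lemma~\ref{lem:Xstar:countingpaths} together with Lemma~\ref{lem:tv:lower} gives the matching lower bound via path-counting in a ``generous'' half-plane process. Those weaker lemmas are all that the main theorem uses, so there is no in-paper proof of the sharp constant against which to compare your proposal.

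On the substance of your proposal: the overall shape (reduce to a quasi-one-dimensional process on each side, identify a terminal per-row time, sum and concentrate) is reasonable and matches the spirit of the tools the paper develops for the weaker bounds, but your derivation of the constant $1/\sqrt{2}$ is not correct. The ``variational computation over the side length at which nucleation-time plus fill-time is minimised'' does not apply: the side length $\ell$ is not a free parameter, it is $\asymp i$ and grows to $m$. In the terminal regime $\ell\gg\sqrt{n/k}$, so the picture ``one nucleation (mean wait $n/(k\ell)\ll\sqrt{n/k}$) followed by a one-sided sweep of length $\ell\gg\sqrt{n/k}$'' gives the wrong order altogether. What actually happens is that many nucleations appear along the side at Poisson space-time density $k/n$, each spreads bidirectionally at rate $1$, and the row is covered when the union of backward light cones (triangles of area $\asymp t^2$) covers it; this is precisely the heuristic the paper gives at the end of Section~\ref{sec:growth} and the mechanism encoded in Lemma~\ref{lem:Xstar:countingpaths}. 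Extracting the sharp constant $1/\sqrt{2}$ from that requires analysing the coupled multi-row growth (the row above starts nucleating before the row below is fully covered, so the front is jagged), not a single-row balance. Your intermediate claim that ``the number of sites on a side is $\asymp\sqrt{n/k}$'' once $i\gtrsim\sqrt{n/k}$ is also wrong: the side has $\asymp i$ sites, which grows up to $m$; what stabilises is the per-row coverage time, for the light-cone reason above, not the side length.

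Finally, the concentration step and the ``weight/energy'' lower bound are stated too loosely to be checked as written, and the obstacle you correctly flag (temporarily elongated droplets whose longer sides grow faster) is genuine but not dispatched by a ``softer coupling between two deterministic-speed square droplets.'' Identifying the exact asymptotic speed in a growth model of this type is the content of a shape/subadditivity argument of FPP type, which is presumably what~\cite{Simon} (and the related~\cite{RSSV,SSV}, also cited by the paper) supply. If your aim were only the constant-factor bounds that Theorem~\ref{thm:NG2d} actually requires, your sketch is in line with Section~\ref{sec:TV}; for the sharp statement as written you would need the missing pieces above.
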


In order to motivate the statements above, let us do a quick (and imprecise) calculation. Suppose for simplicity that our droplet is currently a rectangle with semi-perimeter $i$, and consider the expected time $T$ it takes for this semi-perimeter to grow by one. We have to wait for a 1-neighbour infection on one of the sides of the droplet, and after this it is sufficient to wait for at most $i$ further 2-neighbour infections, so 
\begin{equation}\label{eq:sketch:Tapprox}
\frac{n}{2ki}  \, \le \, T \, \le \, \frac{n}{2ki} + i.
\end{equation}
Being somewhat more precise, if we let $Y_1,Y_2,\dots$ and $Z_1,Z_2,\dots$ be independent random variables such that\footnote{In this paper we write $X\sim\Exp(\lambda)$ to mean $X$ has the exponential distribution with mean $\lambda$.} $Y_i \sim \Exp\left(n/2ki\right)$ and $Z_i \sim \Exp(1)$ for each $i \in \N$, then there exist natural couplings in which we have
$$ \sum_{i=2}^{\Omega(m)} Y_i  \, \le \, T^{-}(m) \, \le \, T^{+}(m) \, \le \,  \sum_{i=2}^{O(m)} Y_i + \sum_{i=1}^{O(m^2)} Z_i.$$
Now, if $m \gg 1$, then
$$\Ex\bigg[ \sum_{i=2}^{\Theta(m)} Y_i \bigg] \, = \, \big( 1 + o(1) \big) \frac{n}{2k} \log{m} \qquad \text{and} \qquad \Ex\bigg[ \sum_{i=2}^{O(m^2)} Z_i \bigg] \, = \, O(m^2),$$
and if $m \ll \sqrt{(n / k) \log (n/k)}$ then $m^2 \ll \frac{n}{k} \log{m}$. To prove Theorem~\ref{thm:acc}, it therefore suffices to prove a suitable concentration inequality for the sum $\sum_{i=2}^m Y_i$. We will do so using the following special case of Freedman's concentration inequality~\cite{F75}. 

\begin{lemma}[Freedman's inequality]\label{lem:Freedman}
Let $a,C>0$ and let $X_1,\dots,X_m$ be independent random variables such that $\E [X_i] \leq 0$ and $|X_i|\leq C$ almost surely for each $1\leq i\leq m$. Then, for every $a > 0$, 
\[
\P\bigg( \sum_{i=1}^m X_i \geq a \bigg) \leq \exp\left(-\frac{a^2}{2\sum_{i=1}^m \Var(X_i) + 2Ca}\right).
\]
\end{lemma}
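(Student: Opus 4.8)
The plan is to use the classical exponential martingale (Bernstein/Chernoff) method, adapted to the martingale setting via a supermartingale argument. First I would set $S_0 = 0$ and $S_j = \sum_{i=1}^j X_i$, and fix a parameter $\lambda > 0$ to be optimised at the end. The goal is to control $\E\big[e^{\lambda S_m}\big]$ and then apply Markov's inequality: $\P(S_m \ge a) \le e^{-\lambda a}\,\E[e^{\lambda S_m}]$. The key step is the pointwise bound, valid for all real $x$ with $|x| \le C$ and all $\lambda$ with $0 < \lambda C \le 1$ (say), of the form
\[
e^{\lambda x} \, \le \, 1 + \lambda x + \psi(\lambda)\, x^2, \qquad \text{where } \psi(\lambda) = \frac{e^{\lambda C} - 1 - \lambda C}{C^2}.
\]
This is the standard convexity estimate: the function $x \mapsto (e^{\lambda x} - 1 - \lambda x)/x^2$ is increasing, so it is maximised at $x = C$ on the interval $[-C, C]$. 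Taking expectations and using $\E[X_i] \le 0$ gives $\E\big[e^{\lambda X_i}\big] \le 1 + \psi(\lambda)\,\Var(X_i) \le \exp\big(\psi(\lambda)\,\Var(X_i)\big)$.

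Since the $X_i$ are independent, multiplying these bounds yields
\[
\E\big[e^{\lambda S_m}\big] \, \le \, \exp\!\bigg( \psi(\lambda) \sum_{i=1}^m \Var(X_i) \bigg),
\]
so that, writing $V = \sum_{i=1}^m \Var(X_i)$, we get $\P(S_m \ge a) \le \exp\big(\psi(\lambda) V - \lambda a\big)$. It then remains to choose $\lambda$ to make the exponent as negative as possible. Rather than optimising $\psi$ exactly (which gives a messier expression), I would use the elementary inequality $e^t - 1 - t \le \frac{t^2/2}{1 - t/3}$ for $0 \le t < 3$, which gives $\psi(\lambda) \le \frac{\lambda^2/2}{1 - \lambda C/3}$, and then take $\lambda = \dfrac{a}{V + Ca/3}$. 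Substituting and simplifying the resulting expression yields the Bernstein-type bound $\exp\!\big(-\frac{a^2}{2V + 2Ca/3}\big)$, which is stronger than (hence implies) the stated bound $\exp\!\big(-\frac{a^2}{2V + 2Ca}\big)$; alternatively one can make an even cruder choice of $\lambda$ tailored to produce exactly the stated constant. Either way the conclusion follows.

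I do not expect any genuine obstacle here — this is a textbook computation. The only point requiring a little care is the pointwise exponential estimate and the verification that the optimising $\lambda$ lies in the range where all the inequalities used are valid (in particular $\lambda C < 3$, which holds automatically for the chosen $\lambda$ since $Ca/3 \le V + Ca/3$ forces $\lambda C \le 3$, with strict inequality unless $V = 0$; the degenerate case $V = 0$ is handled separately and directly). One should also note that the lemma as stated does not assume a martingale/filtration structure — the $X_i$ are genuinely independent — so the independence step is immediate and no conditional-expectation bookkeeping is needed; this is indeed just the special case of Freedman's inequality in which the predictable quadratic variation is deterministic.
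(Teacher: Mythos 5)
The paper does not prove this lemma itself---it is cited directly from Freedman~\cite{F75}---so there is no internal proof to compare against; your exponential-moment (Bernstein) strategy is the standard one, but as written it contains a genuine gap. Taking expectations in the pointwise bound $e^{\lambda x}\le 1+\lambda x+\psi(\lambda)x^2$ and then using $\E[X_i]\le 0$ gives $\E[e^{\lambda X_i}]\le 1+\psi(\lambda)\E[X_i^2]$, \emph{not} $1+\psi(\lambda)\Var(X_i)$. Since $\E[X_i^2]=\Var(X_i)+(\E[X_i])^2\ge\Var(X_i)$, the move from $\E[X_i^2]$ to $\Var(X_i)$ goes the wrong way, and carrying $\sum\E[X_i^2]$ through the rest of the computation yields a bound with $\sum\E[X_i^2]$ in the denominator, which is strictly weaker than the stated inequality whenever some $\E[X_i]<0$ (and cannot simply be absorbed into the $2Ca$ term, since $\sum(\E[X_i])^2$ may be large).

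The fix is to centre first: set $\tilde{X}_i=X_i-\E[X_i]$, note that $\sum\E[X_i]\le 0$ implies $\P\big(\sum X_i\ge a\big)\le\P\big(\sum\tilde{X}_i\ge a\big)$, and observe that $\E[\tilde{X}_i]=0$, $\Var(\tilde{X}_i)=\Var(X_i)$ and $|\tilde{X}_i|\le 2C$. Running your argument verbatim for $\tilde{X}_i$ (with $2C$ in place of $C$) gives the exponent $-a^2/\big(2\sum\Var(X_i)+\tfrac{4}{3}Ca\big)$, and since $\tfrac{4}{3}C<2C$ this implies the stated bound. Everything else in your write-up (the monotonicity of $u\mapsto(e^u-1-u)/u^2$, the elementary bound $e^t-1-t\le\tfrac{t^2/2}{1-t/3}$, the choice of $\lambda$, the remark that independence obviates any filtration bookkeeping) is correct.
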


When $m \gg \sqrt{n/k}$, the bounds in~\eqref{eq:sketch:Tapprox} become rather weak, and the nature of the growth changes. Instead of waiting for a single nucleation on each side of the droplet, it becomes more efficient to wait for time $\Theta(\sqrt{n/k})$, during which time 1-neighbour infections appear with density $\Theta(\sqrt{k/n})$, and then wait for the same amount of time for the space in between to be filled in. Although not all sites on the boundary of the droplet will be infected within time $O(\sqrt{n/k})$, it is not too hard to get around this problem, as long as we are willing to give away a multiplicative constant, see Lemmas~\ref{lem:tv:upper} and~\ref{lem:tv:lower}. 


\subsection{Bootstrap percolation}\label{sec:boot:intro}

The main difference between the Kesten--Schonmann model, discussed in the previous subsection, and the Dehghanpour--Schonmann model, studied in this paper, is that in the latter different droplets interact via a process known as `bootstrap percolation'. This is a particularly simple and well-studied deterministic cellular automaton (that is, a discrete dynamic system whose update rule is homogeneous and local) which has previously found applications in the study of the Glauber dynamics of the Ising model~\cite{FSS,M11}. We will use some standard techniques from the area to prove  Theorem~\ref{thm:NG2d}.

The classical `$r$-neighbour' bootstrap process, which was introduced in 1979 by Chalupa, Leath and Reich~\cite{CLR}, is as follows. Given a graph $G$, an integer $r \in \N$ and a set $A \subset V(G)$, we set $A^{(0)} = A$ and
$$A^{(i+1)} \, = \, A^{(i)} \cup \big\{ v \in V(G) \,:\, |N(v) \cap A^{(i)}| \ge r \big\}$$
for each $i \ge 0$. We write $\< A \>_r = \bigcup_i A^{(i)}$ for the set of eventually-infected vertices, and say that $A$ \emph{percolates} under the $r$-neighbour process if $\< A \>_r = V(G)$. The main parameter of interest is the critical probability
\begin{equation}\label{def:pc:r:boot}
p_c(G,r) \, = \, \inf\Big\{ p \in (0,1) \,:\, \Pr_p\big( \< A \>_r = V(G) \big) \ge 1/2 \Big\},
\end{equation}
where $\Pr_p$ denotes that $A$ is a $p$-random subset of $V(G)$, that is, each vertex $x \in V(G)$ is an element of $A$ with probability $p$, all independently of one another. For the lattice $\Z^d$, it was shown by Schonmann~\cite{Sch} that $p_c(\Z^d,r) = 0$ if $r \le d$ and $p_c(\Z^d,r) = 1$ otherwise. Much more precise bounds for finite grids (or tori) were proved in~\cite{AL,BBM,CC,CM,Hol}, culminating in the work of Balogh, Bollob\'as, Duminil-Copin and Morris~\cite{BBDM}, who proved that
\begin{equation*}\label{eq:BBDM:thm}
p_c\big( [n]^d,r \big) \, = \, \bigg( \frac{\lambda(d,r) + o(1)}{\log_{(r-1)} (n)} \bigg)^{d-r+1}
\end{equation*}
as $n \to \infty$ for every fixed $d \ge r \ge 2$, where $\lambda(d,r) > 0$ is an explicit constant, and the function $\log_{(r)}$ denotes an $r$-times iterated logarithm. In the case $d = r = 2$ this result was first proved by Holroyd~\cite{Hol}, who moreover showed that $\lambda(2,2) = \pi^2/18$, so 
\begin{equation}\label{eq:Hol:thm}
p_c\big( [n]^2,2 \big) \, = \, \bigg( \frac{\pi^2}{18} + o(1) \bigg) \frac{1}{\log n}.
\end{equation}
The upper bound in regime~$(a)$ of Theorem~\ref{thm:NG2d} follows easily from~\eqref{eq:Hol:thm} (see Proposition~\ref{prop:a:upper}), and to prove the corresponding lower bound we will adapt the proof from~\cite{Hol}, see Section~\ref{sec:lower:boot}. 

We will use various basic techniques from the study of bootstrap percolation in our analysis of the nucleation and growth process. Perhaps the most important of these is the so-called `rectangles process', which was introduced over 25 years ago by Aizenman and Lebowitz~\cite{AL}. Since modifications of this process will play a key role in several of the proofs below, let us briefly describe it in the setting in which it was first used: the 2-neighbour bootstrap process on $\Z^2$.

\begin{defn}[The rectangles process]
Let $A = \{x_1,\ldots,x_m\}$ be a finite set of sites in $\Z^2$, and consider the collection $\big\{ (R_1,A_1), \ldots, (R_m,A_m)\big\}$, where $A_j = \{x_j\}$ and $R_j = \< A_j \>_2$ for each $j \in [m]$. Now repeat the following steps until STOP:
\begin{itemize}
\item[1.] If there exist two rectangles $R_i$ and $R_j$ in the current collection at distance at most two from one another, then choose such a pair, remove them from the collection, and replace them by $(\< A_i \cup A_j \>_2, A_i \cup A_j)$. 
\item[2.] If there do not exist such a pair of rectangles, then STOP.
\end{itemize}
\end{defn}

It is easy to see that (the union of) the final collection of rectangles is exactly the closure $\< A \>_2$ under the 2-neighbour bootstrap process on $\Z^2$. Moreover, at each step of the process, the current collection of rectangles are \emph{disjointly internally spanned} by $A$; that is, we have $R_j = \< A_j \>_2$ for each $j$, and the sets $A_j$ are disjoint. 

The rectangles process also proves the following key lemma. Let $\phi(R)$ denote the semi-perimeter of the rectangle $R$. 

\begin{AL}
If $R = \< A \>_2$ is a rectangle, then for every $1 \leq \ell \leq \phi(R)$, there exists a rectangle $R' \subset R$ with 
$$\ell \leq \phi(R') \leq 2\ell,$$
such that $\< R' \cap A \>_2 = R'$. 
\end{AL} 

\begin{proof}
At each step of the rectangles process, the maximum semi-perimeter of a rectangle in the current collection at most doubles. 
\end{proof}

In order to prove the lower bounds in Theorem~\ref{thm:NG2d}, we will couple the nucleation and growth process with two different variants of the rectangles process, and prove corresponding Aizenman--Lebowitz-type lemmas, see Lemmas~\ref{le:AL:RRP} and~\ref{le:AL:TV}. In order to define the various random variables involved in our couplings, we will find the following notation useful.

\begin{defn}
Given $S \subset \Z^2$, we denote by $[S]_t$ the set of sites infected at time $t$ under the Kesten--Schonmann process with initially infected set $S$. That is, if we infect the set $S$ at time zero, and then only allow 1- and 2-neighbour infections.   
\end{defn}

We will use the following lemma, which can be easily proved using the methods of~\cite{Hol} (or indeed the earlier methods of~\cite{AL}). For much more precise results on the time of bootstrap percolation, see~\cite{BBS}.

\begin{lemma}\label{lem:boot:upper:weak}
There exists a constant $C > 0$ such that the following holds for every $n \in \N$ and $1 \le k \le n$. Let $M > m > 0$, and suppose $[M]^2$ is partitioned into $m \times m$ squares in the obvious way. Let $A$ consist of the union of a $p$-random subset of the $m \times m$ squares. If
$$p \, \ge \, \frac{C}{\log (M/m)},$$
then $[A]_{M (\log (M/m))^4} = [M]^2$ with high probability as $M/m \to \infty$. 
\end{lemma}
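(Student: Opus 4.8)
The plan is to reduce the statement to the renormalised $2$-neighbour bootstrap process on a coarse-grained lattice and then feed it into the known time estimates for bootstrap percolation on $[n]^2$. First I would set up the renormalisation: identify each $m \times m$ block of the partition of $[M]^2$ with a single site of a grid $[N]^2$, where $N = M/m$ (assume for convenience $m \mid M$; the general case costs only a bounded factor). The key observation is a one-step \emph{internal spanning} fact: if two blocks $B_1, B_2$ of the partition are adjacent (share a side) and both are fully infected, then in the Kesten--Schonmann process restricted to $B_1 \cup B_2$ the $2 \times m$ (or $m \times 2$) strip between them, and hence $B_1 \cup B_2$, becomes fully infected after a further $O(m)$ time, since $2$-neighbour infections fill a rectangle of semi-perimeter $O(m)$ in time $O(m)$ by the sum-of-$\Exp(1)$ bound in~\eqref{eq:sketch:Tapprox} (with a trivial concentration estimate, e.g.\ Freedman's inequality, Lemma~\ref{lem:Freedman}). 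Thus each time two infected components of blocks become $2$-neighbour-adjacent in the coarse lattice, they merge into a fully-infected rectangle in the fine lattice within time $O(m)$.

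Next I would run the rectangles process (in its coarse-lattice incarnation) on the random set $A'$ of fully-infected blocks, which is a $p$-random subset of $[N]^2$ with $p \ge C/\log N$. By~\eqref{eq:Hol:thm}, for $C$ large enough this $p$ lies above $p_c([N]^2, 2)$, so with high probability as $N \to \infty$ the set $A'$ percolates under $2$-neighbour bootstrap on $[N]^2$; equivalently $[A]_\infty \supseteq [M]^2$ in the Kesten--Schonmann process (here I would note that everything is monotone, so we may as well expose only these merges). It remains to control the \emph{time}. Each merge step takes time $O(m)$ in the fine lattice, and there are at most $N^2$ merges, but that naive bound gives $O(N^2 m) = O(M^2/m)$, which is too large. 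Instead I would use the bound on the \emph{number of steps} / the \emph{time of bootstrap percolation} from~\cite{Hol} (cited just before the lemma): with high probability the $2$-neighbour bootstrap closure of a $p$-random set on $[N]^2$ with $p \ge C/\log N$ is reached after $O(N \cdot \mathrm{polylog}\, N)$ "rounds", and more usefully via the rectangles process one can order the merges so that at all times the relevant rectangles have semi-perimeter $O(N)$; summing the $O(m)$ fine-lattice costs along a spanning sequence of merges of a rectangle of coarse-semi-perimeter $s$ gives fine-lattice time $O(sm \cdot \log N)$ crudely, and a slightly more careful accounting (charging each of the $O(N^2)$ merges but noting the Kesten--Schonmann $2$-neighbour fill of the union is simultaneous across a whole rectangle once its spanning blocks are present) yields total time $O(M (\log N)^4) = O\big(M(\log(M/m))^4\big)$. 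The exponent $4$ is deliberately generous, matching the statement, so I would not optimise it.

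The main obstacle is the timing bookkeeping in the last step: one must be careful that the $O(m)$-time fine-lattice fills for different coarse merges can be run concurrently rather than sequentially, and that the coarse bootstrap process genuinely terminates quickly (not just eventually). For the former, I would exploit monotonicity and the fact that the Kesten--Schonmann process on disjoint regions evolves independently, so the total wall-clock time to realise all merges in a rectangle of coarse-diameter $D$ is $O(D \cdot m) + O(m) \cdot (\text{depth of the merge tree})$, and the merge tree can be taken to have depth $O(\log N)$ by always merging comparably-sized pieces (this is exactly the Aizenman--Lebowitz doubling observation). For the latter, I would quote the time estimate implicit in Holroyd's proof (or cite~\cite{BBS} for sharper bounds, as the remark before the lemma suggests) to get that the coarse process fills $[N]^2$ within $N(\log N)^{O(1)}$ rounds w.h.p. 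Putting these together and absorbing constants into the exponent gives $[A]_{M(\log(M/m))^4} = [M]^2$ with high probability as $M/m \to \infty$, as required.
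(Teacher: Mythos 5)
The overall plan---coarse-grain to a bootstrap model on $[N]^2$ with $N=M/m$, apply a $\Theta(1/\log N)$ threshold, and bound fine-lattice merging times along an Aizenman--Lebowitz merge tree---is the right kind of argument, and is what the paper gestures at in referencing the methods of \cite{Hol} and \cite{AL} (it gives no proof of its own). However, the specific reduction you state does not hold. You identify the coarse process with \emph{standard} $2$-neighbour bootstrap on $[N]^2$ and quote \eqref{eq:Hol:thm}, asserting that percolation of the coarse model is \emph{equivalent} to the fine process filling $[M]^2$. This is false in the direction you need: if blocks $B(i,j)$ and $B(i+2,j)$ are fully infected and $B(i+1,j)$ is not, then the coarse site $(i+1,j)$ is infected in one step of $2$-neighbour bootstrap on $[N]^2$, yet no site of $B(i+1,j)$ ever acquires two infected neighbours on the fine lattice, so $B(i+1,j)$ does not fill by $2$-neighbour infections (and for small $k$ the $1$-neighbour infections are far too slow to close such a gap within time $M(\log(M/m))^4$). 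The coarse process actually induced by the fine $2$-neighbour dynamics merges blocks only when they are $8$-adjacent; this is a strictly more restrictive (modified-type) rule. It still has critical probability $\Theta(1/\log N)$, so the conclusion survives once $C$ is large enough, but that is a different fact from \eqref{eq:Hol:thm} and needs to be identified and proved (or cited) separately---your write-up does not do this.

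Separately, your ``key observation'' is vacuous: two adjacent fully infected $m\times m$ blocks already form a fully infected $m\times 2m$ rectangle, with no strip between them. The observations you actually need are that a block with two fully infected \emph{perpendicular} neighbours (say left and above) fills from the shared corner by $2$-neighbour infections in time $O(m)$ w.h.p., and that two \emph{diagonally} adjacent fully infected blocks fill their $2m\times 2m$ bounding square in time $O(m)$ w.h.p.; these are precisely the fine-lattice realisations of the $8$-adjacent coarse merges. The timing bookkeeping at the end (disjointness of the spanned rectangles, $O(\log N)$ doubling depth, giving $O(M\log N)\ll M(\log(M/m))^4$) is plausible in outline, but it needs to be attached to the corrected coarse process once the renormalisation is repaired.
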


We will use Lemma~\ref{lem:boot:upper:weak} to prove the upper bounds in regimes~$(b)$ and~$(c)$ of Theorem~\ref{thm:NG2d}. Finally, let us observe that we may  always restrict our attention to the square $S(n)$. 

\begin{lemma}\label{lem:Sn}
With high probability as $t \to \infty$, there is no path of infections from outside the square $S(t)$ to the origin in time $o(t)$. 
\end{lemma}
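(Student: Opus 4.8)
The goal is to show that with high probability there is no fast ``long-range'' path of infections reaching the origin; that is, any chain of infections that starts outside the box $S(t)$ cannot arrive at $\0$ in time $o(t)$. The plan is to bound the rate at which infection can propagate \emph{at all}, irrespective of the number of infected neighbours, and to combine this with a union bound over the possible ``entry points'' at distance $\ge t$ from the origin.

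First I would set up the notion of an \emph{infection path}: a sequence of sites $v_0, v_1, \dots, v_\ell = \0$ with $v_0 \notin S(t)$, consecutive sites adjacent in $\Z^2$, and such that each $v_{j}$ is infected after $v_{j-1}$, with the infection of $v_j$ ``caused'' by that of its neighbours. Since any vertex is infected at rate at most $1$ (the maximum of the three rates $1/n$, $k/n$, $1$ is $1$ once $k \le n$), we may dominate the whole process from above by first-passage percolation on $\Z^2$ with i.i.d.\ $\Exp(1)$ edge-weights: the time at which $\0$ can be reached via a path originating outside $S(t)$ is stochastically at least the minimum, over all such paths $\gamma$, of $\sum_{e \in \gamma} \omega_e$ where $\omega_e \sim \Exp(1)$ independently. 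More carefully, one assigns to each directed edge $(u,v)$ an independent $\Exp(1)$ clock governing the potential infection of $v$ from $u$, and notes that in the real dynamics $v$ is infected no earlier than $u$'s infection time plus this clock; hence the infection time along any path dominates a sum of $\ell \ge t$ i.i.d.\ $\Exp(1)$ variables.

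Next I would invoke the standard large-deviation estimate for first-passage percolation (or simply for sums of exponentials together with a counting bound on self-avoiding paths): there is a constant $c > 0$ such that, with probability tending to $1$, \emph{every} path $\gamma$ in $\Z^2$ of length $\ell$ starting within distance, say, $2t$ of the origin has passage time at least $c\ell$. Concretely, for a fixed path of length $\ell$ the probability that $\sum_{i=1}^\ell \omega_i \le c\ell$ decays exponentially in $\ell$ (for $c$ small), while the number of self-avoiding paths of length $\ell$ from a given starting region grows only like $C^\ell$; choosing $c$ small enough that $C e^{-\Theta(1)} < 1$ and summing over $\ell \ge t$ and over the $O(t^2)$ possible endpoints on $\partial S(t)$ (or starting points just outside it) gives a bound that is $o(1)$ as $t \to \infty$. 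This yields that every infection path from outside $S(t)$ to $\0$ takes time at least $c t$, which in particular rules out time $o(t)$.

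The one genuinely fiddly point — and the step I expect to need the most care — is justifying the stochastic domination by i.i.d.\ exponential first-passage percolation \emph{in the presence of the boundary condition ``path from outside $S(t)$''}: the real process does have vertices infected at rate $1$ once they have two infected neighbours, so infection does not literally travel along a single path. The clean way around this is to observe that in any realisation, if $\0$ is infected by time $s$ then there is a sequence of infection events $v_0 \to v_1 \to \cdots \to \0$ in which $v_{j}$ was already a neighbour of an infected vertex at the moment $v_{j-1}$'s infection ``triggered'' $v_j$; extracting such a witnessing path and then comparing each step to the governing $\Exp(1)$ clock (using that an $\Exp(\lambda)$ waiting time with $\lambda \le 1$ stochastically dominates an $\Exp(1)$ one — wait, it is dominated, which is exactly the direction we want for a \emph{lower} bound on time) makes the domination rigorous. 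I would therefore carry out this ``witness-path extraction'' carefully, note the rate bound $\max\{1/n, k/n, 1\} \le 1$, and then cite the FPP large-deviation bound above; the remainder is the routine union bound.
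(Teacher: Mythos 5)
Your proof is correct and takes essentially the same route as the paper's: since every vertex is infected at rate at most $1$, each step of an infection path stochastically dominates an $\Exp(1)$ waiting time, and a first-moment bound over the at most $3^{m+1}$ paths of length $m\ge t$ from outside $S(t)$ to the origin shows the expected number of such paths traversed in time $o(t)$ is $o(1)$. The paper's proof is a two-line version of this same counting argument; the FPP-domination and witness-path framing you use is just a more explicit way of setting up the same per-step $\Exp(1)$ lower bound and union bound.
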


\begin{proof}
We simply count the expected number of such paths, and apply Markov's inequality. To spell it out, for each $m \ge t$ there are at most $3^{m+1}$ paths of length $m$ from outside $S(t)$ to the origin, and each contains at least $m/2$ steps of length $o(1)$ with probability $o(1)^m$. 
\end{proof}

Since the bounds in Theorem~\ref{thm:NG2d} are all $o(n)$, it follows that we may always set all rates outside $S(n)$ equal to zero.

\subsection{A sketch of the proof of Theorem~\ref{thm:NG2d}}

Having introduced our main tools, let us outline how they imply the bounds on $\tau$ in our main theorem. In the sketches below, let us write $\eps$ (resp. $C$) for an arbitrarily small (resp. large) positive constant.\medskip

\noindent \textbf{Regime $(a)$}: As noted above, it is easy to deduce the upper bound (which in fact holds for all $1 \le k \le n$) from Holroyd's theorem~\eqref{eq:Hol:thm}; to prove the lower bound when $k \ll \log n$ we repeat\footnote{In fact this is an oversimplification, since there are a number of additional technical complications to overcome in order to obtain the claimed bound, see Section~\ref{sec:lower:boot}.} the proof of Holroyd~\cite{Hol}, showing that at each step of the hierarchy the effect of the 1-neighbour infections is negligible. The basic idea is that our droplet(s) will meet various `double gaps', which it will take them some time to cross (using 1-neighbour infections). More precisely, each such crossing takes time roughly $n / (k \log n)$, and so the probability that we cross $\Omega(\log n)$ such double gaps is polynomially small in $n$. We therefore have at most ${\log n \choose o(\log n)} = n^{o(1)}$ choices for the positions of our double gaps, and this allows us to use the union bound. The main difficulty lies in finding a set of $\Omega(\log n)$ double gaps that must be crossed in a certain order, which allows us to couple the total time taken with a sum of exponential random variables.

\medskip

\noindent \textbf{Regime $(b)$}: To prove the upper bound in this regime, we choose $m$ so that $\eps m^2 \log m = C k / \log n$, partition $S(n^{1/3})$ into translates of $S(m)$, and observe that in each, the probability that there is at least one nucleation by time $\eps (n/k) \log m$ is roughly $C / \log n$. Moreover, if a copy of $S(m)$ contains a nucleation, then (by Theorem~\ref{thm:acc}) with high probability it is entirely infected by time $(1/2 + \eps) (n/k) \log m$. Finally, by Lemma~\ref{lem:boot:upper:weak}, the entirely infected copies of $S(m)$ will (bootstrap) percolate in $S(n^{1/3})$, and this complete infection occurs in time $o(n/k)$. 

To prove the lower bound, we restrict to $S(n)$ (since nucleations outside this box do not have time to reach the origin), and perform the following coupling: first run for time $t = (1/2 - \eps) (n/k) \log m$ adding only nucleations; then re-run time, adding 1-neighbour infections as they occur, and $2$-neighbour infections instantaneously, according to the rectangles process described above. By the Aizenman--Lebowitz lemma, there are three possibilities:
\begin{itemize}
\item[$(i)$] there exists a nucleation within distance $o(\sqrt{n/t})$ of the origin, \smallskip
\item[$(ii)$] there exists,  within distance $\sqrt{k \log n}$ of the origin, a rectangle $R$ of semi-perimeter roughly $\sqrt{n/t}$ that is internally filled by time $t$.\smallskip
\item[$(iii)$] there exists in $S(n)$ a rectangle $R$ of semi-perimeter roughly $\sqrt{k \log n}$ that is internally filled by time $t$. 
\end{itemize}
We use Markov's inequality to show that each of these possibilities is unlikely.

To be slightly more precise, we will first bound the number of nucleations in $R$, and then bound the probability that fewer nucleations grow to fill $R$ by time $t$. A key observation is that, since $2$-neighbour infections do not increase the perimeter of the droplet, the rate of growth can be bounded as in Theorem~\ref{thm:acc}. Our bound on the probability (with the maximum allowed number of nucleations) is just strong enough to beat the number of choices for the rectangle $R$ (i.e., in case $(ii)$ it is super-polynomial in $k$, and in case $(iii)$ it is super-polynomial in $n$). 

One interesting subtlety in the proof is that if $k$ is close to the upper bound (more precisely, if $\sqrt{n} (\log n)^{3/2} \ll k \ll \sqrt{n} (\log n)^{2}$) the droplets are typically already growing at terminal velocity when they start to combine in the bootstrap phase. However, this only causes problems in the proof of the upper bound.

\medskip

\noindent \textbf{Regime $(c)$}: The proof of the upper bound is similar to that in regime $(b)$. Indeed, we will set $t = \big(\frac{n^2}{k \log (n/k)} \big)^{1/3}$, $m = t \sqrt{k/n}$ and $M = m \cdot (n/k)^{1/4}$, and partition $S(M)$ into translates of $S(m)$. The probability that there is at least one nucleation in $S(m)$ by time $Ct$ is at least $C / \log (n/k)$, each such nucleation grows to fill its translate of $S(m)$ in time $Ct$ (see Lemma~\ref{lem:tv:upper}), and by Lemma~\ref{lem:boot:upper:weak} these entirely infected copies of $S(m)$ bootstrap percolate to infect in $S(M)$ in time $o(t)$. 

To prove the lower bound, the main step is to define a `generous rectangles process', and to show that with high probability this process does indeed contain the original process (see Lemmas~\ref{lem:tv:lower} and~\ref{lem:TV:coupled}). Using this coupling we can prove an Aizenman--Lebowitz-type lemma, and then (modulo some non-trivial technical differences) repeat the proof described above. 


\section{Accelerating regime}\label{sec:AR}

In this section we will prove the following two lemmas, which give lower and upper bounds respectively on the rate of growth of a single droplet. First, let $T_m$ be the random time it takes in the Kesten--Schonmann process for a single nucleation at the origin to grow to contain a rectangle of semi-perimeter $m$. That is, define
$$T_m \, = \, \inf \big\{ t \ge 0 : \exists \, R \subset [\0]_t \text{ with } \phi(R) \ge m \big\}$$
for each $m \in \N$. The results of this section hold for every function $1 \le k = k(n) \le n$. 

\begin{lemma}\label{lem:acc:upper}
Fix $\delta > 0$ sufficiently small, and let $n \in \N$ be sufficiently large. If $m \in \N$ satisfies 
$$m \, \leq \, \delta\sqrt{\frac{n}{k} \log m},$$ 
then
\[
\P\left( T_m \geq (1+\delta)\frac{n}{2k}\log m \right) \leq \exp\Big( -\delta^2 \sqrt{\log m}\Big).
\]
\end{lemma}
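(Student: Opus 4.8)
The plan is to follow the heuristic given after Theorem~\ref{thm:tv}: couple $T_m$ with a sum of exponential random variables $\sum_{i=2}^{m} Y_i$, where $Y_i \sim \Exp(n/2ki)$, plus a small error term coming from the $2$-neighbour infections that fill in a side, and then apply Freedman's inequality (Lemma~\ref{lem:Freedman}) to the centred sum.

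First I would set up the coupling. Starting from the single nucleation at $\0$, I track a growing rectangle $R(t) \subseteq [\0]_t$ and its semi-perimeter $\phi(R(t))$. To grow the semi-perimeter from $i$ to $i+1$, it suffices to wait for one new $1$-neighbour infection to appear adjacent to one of the (at most four) sides of the current rectangle — each side of length at most $i$ contributes rate at least $(k/n)\cdot(\text{length})$, but to be safe I only need the bound that \emph{some} relevant site flips at rate at least $ki/n$ — and then wait for at most $i$ further $2$-neighbour infections (at rate $1$ each) to complete the new row or column. Thus there are independent $Y_i \sim \Exp(n/2ki)$ (using $i$ rather than $2i$ to absorb constants safely, or being careful about the exact constant — here the claimed constant is $n/2k$, so I must make sure the $1$-neighbour rate on the boundary is at least $2ki/n$, which holds once the rectangle is genuinely $2$-dimensional, i.e. has both side-lengths $\gtrsim 1$; the first few steps contribute only $O(1)$ and are harmless) and $Z_{i,1},\dots,Z_{i,i}\sim\Exp(1)$ such that
\[
T_m \, \le \, \sum_{i=2}^{m} Y_i \, + \, \sum_{i=2}^{m}\sum_{j=1}^{i} Z_{i,j}.
\]
The double sum has expectation $O(m^2)$ and is easily shown (e.g. by another application of Freedman, or just Markov on the sum of i.i.d.\ exponentials) to be $O(m^2\log m)$, say, with probability $1 - o(1)$; since $m \le \delta\sqrt{(n/k)\log m}$ gives $m^2 \le \delta^2 (n/k)\log m$, this whole contribution is at most $\delta^2\cdot\frac{n}{k}\log m \cdot \text{polylog}$, which I will need to be $\ll \delta \cdot \frac{n}{2k}\log m$; so I should take the hypothesis constant genuinely small, or replace $m^2\log m$ bound by the cleaner $O(m^2)$ with probability $1-e^{-\Omega(m)}$ via a Chernoff bound for sums of exponentials, which comfortably beats $e^{-\delta^2\sqrt{\log m}}$.

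The main work is the concentration of $\sum_{i=2}^{m} Y_i$. Its mean is $\sum_{i=2}^m \frac{n}{2ki} = (1+o(1))\frac{n}{2k}\log m$. I apply Freedman's inequality to $X_i := Y_i - \Ex[Y_i]$, except that exponentials are unbounded, so I first truncate: let $\tilde Y_i = Y_i \wedge L$ with $L$ chosen so that $\Pr(Y_i \ne \tilde Y_i)$ summed over $i$ is $o(e^{-\delta^2\sqrt{\log m}})$; since $\Ex[Y_i] = n/2ki \ge n/2km$ is large, taking $L = \Theta((n/k)\log m)/\sqrt{\log m}$-ish makes each $\Pr(Y_i > L)$ superpolynomially small in $m$ while $C = L$ and $\sum \Var(\tilde Y_i) \le \sum (n/2ki)^2 = O((n/k)^2)$. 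Then Freedman gives
\[
\Pr\!\bigg(\sum_{i=2}^m \tilde Y_i - \Ex\Big[\sum \tilde Y_i\Big] \ge \tfrac{\delta}{2}\cdot\tfrac{n}{2k}\log m\bigg) \, \le \, \exp\!\bigg(\!-\frac{(\delta/2)^2 (n/2k)^2(\log m)^2}{O((n/k)^2) + O\big((n/k)\log m/\sqrt{\log m}\big)\cdot(n/k)\log m}\bigg),
\]
and the denominator is $O((n/k)^2(\log m)^{3/2})$, so the exponent is $-\Omega(\delta^2 (\log m)^{1/2})$, matching the claimed bound. I expect the genuine obstacle to be bookkeeping the truncation level $L$ so that simultaneously (i) the truncation-tail probability beats $e^{-\delta^2\sqrt{\log m}}$, (ii) the $Ca$ term in Freedman's denominator does not dominate $\sum\Var(X_i)$ by more than a factor $(\log m)^{O(1)}$, and (iii) the shift in mean caused by truncation is negligible; all three are compatible precisely because $m$ is polynomially bounded in $n/k$ while $\log m$ is unbounded, but one has to choose the parameters in the right order. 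The other minor subtlety — that the rectangle's side-lengths are balanced enough that a single $1$-neighbour infection plus $\le i$ fill-ins really does increase $\phi$ by $1$ — is handled by always growing the \emph{shorter} side first, which keeps the rectangle within bounded aspect ratio and is standard in the bootstrap-percolation literature cited.
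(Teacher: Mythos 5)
Your proposal is correct and follows essentially the same route as the paper: couple $T_m$ to $\sum_{i=2}^m Y_i + \sum Z_{i,j}$ with $Y_i \sim \Exp(n/2ki)$, dispose of the $O(m^2)$ worth of rate-$1$ fill-in terms using the hypothesis $m^2 \le \delta^2(n/k)\log m$ together with a Poisson/Chernoff tail, and apply Freedman's inequality to $\sum Y_i$ after truncating at level $C \asymp (n/k)\sqrt{\log m}$, balancing $\sum\Var(X_i) = O((n/k)^2)$ against $Ca = O((n/k)^2(\log m)^{3/2})$ to obtain the exponent $-\Omega(\delta^2\sqrt{\log m})$. The paper's only (cosmetic) simplification is to truncate the already-centred variable, i.e.\ set $X_i = \min\{Y_i - \E[Y_i],\, (n/k)\sqrt{\log m}\}$, so that $\E[X_i]\le 0$ holds automatically and your worry~(iii) about the mean shift disappears; the tail of the truncation is then handled by a separate union bound exactly as you anticipate.
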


For the lower bound we will need to understand the action on arbitrary (finite) initial configurations of a `modified' Kesten--Schonmann process, in which all $2$-neighbour infections are instantaneous.\footnote{To be precise, in this process each vertex $v \in \Z^2$ is infected at rate $0$ if it has no infected neighbours, at rate $k/n$ if it has one infected neighbour, and at rate $\infty$ if it has two or more infected neighbours.} Let us denote by $T^*_m(A)$ the random time at which the set of infected vertices in this process first has total semi-perimeter at least $m$, if $A$ is the set of infected vertices at time zero.

\begin{lemma}\label{lem:acc:lower}
Fix $\delta > 0$ sufficiently small, and let $m \in \N$ be sufficiently large. For every $1 \le \ell \le e^{-1/\delta^2} m$, and every set $A \subset \Z^2$ of size at most $\ell$, 
\[
\P\bigg( T^*_m(A) \leq (1-\delta)\frac{n}{2k}\log\frac{m}{\ell} \bigg) \, \leq \, \exp\bigg(- \delta^2 \max\big\{ m^{\delta/2}, \ell \big\} \bigg).
\]
\end{lemma}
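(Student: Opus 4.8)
I would prove Lemma~\ref{lem:acc:lower} by coupling the modified Kesten--Schonmann process started from $A$ with a sum of exponential random variables, exactly in the spirit of the heuristic in Section~\ref{sec:growth}, and then applying Freedman's inequality (Lemma~\ref{lem:Freedman}) to obtain the stated large-deviation bound. The key structural observation is that $2$-neighbour infections never increase the perimeter of the infected region: if the current infected set is a union of rectangles (which it is, after we take closures), then the only way the \emph{total semi-perimeter} can grow is via a $1$-neighbour infection on the boundary of one of the droplets. Since we start from $|A| \le \ell$ points and each point has semi-perimeter $2$, the total semi-perimeter starts at most $2\ell$ and, to reach $m$, must increase roughly $m/2 - \ell$ times (up to constants and the bookkeeping of merges, which only \emph{decrease} total semi-perimeter). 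Crucially, when the total semi-perimeter is $i$, the total length of the boundary available for $1$-neighbour infections is $O(i)$, so the rate at which new $1$-neighbour infections appear anywhere on the configuration is at most $O(ki/n)$; hence the waiting time for the next increment of the total semi-perimeter stochastically dominates an $\Exp(n/Cki)$ variable, and these can be taken independent across increments by the memoryless property.

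**Key steps, in order.** First I would set up the coupling: let $i_0 \le 2\ell$ be the initial total semi-perimeter; for $i \ge i_0$ let $Y_i \sim \Exp(n/(2ki))$ be independent (here the constant $2$ in $n/2ki$ comes from the fact that an increment of the semi-perimeter by one requires a $1$-neighbour infection on one of the two ``active'' sides, cf.~\eqref{eq:sketch:Tapprox}), and argue that $T^*_m(A) \ge \sum_{i=i_0}^{m} Y_i$ in the coupled space, or rather $\ge \sum$ over the increments actually needed. Second, I would compute the mean: $\Ex\big[\sum_{i=2\ell}^{m} Y_i\big] = \frac{n}{2k}\big(\log m - \log(2\ell) + o(1)\big) \ge (1 - \delta/2)\frac{n}{2k}\log(m/\ell)$ using $\ell \le e^{-1/\delta} m$ to absorb the constant and the $o(1)$, provided $\delta$ is small and $m$ large. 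Third, I would apply Freedman's inequality to the centered variables $X_i = \Ex[Y_i] - Y_i$ — but there is an immediate issue, namely the $Y_i$ are unbounded, so I would first truncate each $Y_i$ at a level like $L = \frac{n}{2k}\cdot m^{-\delta}$ (or some threshold chosen so that the truncation changes the sum by at most $\delta^2 \cdot \frac{n}{2k}\log(m/\ell)$ with the required probability, which holds because $\Pr(Y_i > L)$ is exponentially small in $m^{\delta}$ for the relevant range of $i$, and there are only $m$ indices). After truncation, $\sum_i \Var(X_i) \le \sum_i \Ex[Y_i^2] = O\big((n/k)^2\big)$, the cutoff is $C = L = (n/k)m^{-\delta}$, and Freedman's bound with $a = \Theta\big(\frac{n}{k}\log(m/\ell)\big)$ gives a deviation probability of order $\exp\big(-\Theta(\log^2(m/\ell)) / (1 + m^{-\delta}\log(m/\ell))\big)$. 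Since $\log(m/\ell) \ge 1/\delta$, this is at most $\exp(-m^{\delta/2})$ for $m$ large, say. Finally, I would handle the $\ell$ in the $\max\{m^{\delta/2},\ell\}$: when $\ell$ is large (comparable to $m^{\delta/2}$ or bigger) the relevant bound is just $\ell$, and this is actually the \emph{easier} case because the initial semi-perimeter $\approx 2\ell$ is already a constant fraction of what's needed when $\ell$ is close to $e^{-1/\delta}m$, so the time to grow is genuinely the sum of about $m - 2\ell$ exponentials; one can get $e^{-\delta^2 \ell}$ either from a union bound over the $\ell$ starting points each needing to ``not fire too fast'' or directly from the Freedman estimate since $\log(m/\ell) \gtrsim 1$ forces $a \gtrsim n/k$ while $\sum\Var \lesssim (n/k)^2 \cdot$ (number of terms), but I would likely split into $\ell \le m^{\delta/2}$ and $\ell > m^{\delta/2}$ and treat each with the appropriate bound.

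**The main obstacle.** The delicate point is the lower coupling $T^*_m(A) \ge \sum Y_i$ together with the bound on the boundary length. One must be careful that after a $1$-neighbour infection appears and the induced $2$-neighbour cascade runs instantaneously, the resulting configuration is still a disjoint union of rectangles whose total semi-perimeter has increased by \emph{at most} a controlled amount — in fact, a single $1$-neighbour infection adjacent to a rectangle $R$ extends it by one row/column, increasing $\phi(R)$ by exactly $1$ before any merges, and merges only decrease the total; so the total semi-perimeter is nondecreasing and increases by exactly $1$ per ``productive'' $1$-neighbour event, while unproductive $1$-neighbour events (isolated nucleations away from existing droplets) also only add $2$ each and can only help. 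But nucleations are not part of this process (we are in the Kesten--Schonmann process started from $A$, no new nucleations), so really only $1$-neighbour infections on existing boundaries matter, the total boundary length when the semi-perimeter is $i$ is at most $2i + O(\ell)$ (the $O(\ell)$ accounting for corners/separation, absorbable into the constant), and the rate of the next productive event is at most $k/n$ times that — giving the $\Exp(n/2ki)$ lower bound on the waiting time after suitably choosing constants. Getting all these constants to line up so that the mean is genuinely $\ge (1-\delta/2)\frac{n}{2k}\log(m/\ell)$ rather than $(1-\delta/2)\frac{n}{Ck}\log(m/\ell)$ is the one place where the argument needs real care, and is presumably why the authors isolate the per-increment rate as $n/2ki$ rather than $n/ki$: the two active sides of a growing rectangle each contribute, so the effective rate is $2ki/n$, not $ki/n$. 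I would verify this carefully against the $2$-neighbour-infections-preserve-perimeter principle. Everything else — the truncation, the Freedman application, the $\max\{m^{\delta/2},\ell\}$ bookkeeping — is routine.
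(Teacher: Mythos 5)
Your setup is essentially correct and identical to the paper's: the coupling $T^*_m(A)\ge \sum Y_i$ with $Y_i\sim\Exp(n/2ki)$ (this is exactly the paper's Observation~\ref{obs:basiccoupling:lower}), the mean computation exploiting $\ell\le e^{-1/\delta}m$, and the appeal to Freedman's inequality after a one-sided truncation. But there is a genuine gap in how you apply Freedman, and it stems from where you start the sum.

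You keep the full range of indices, starting from $i_0\le 2\ell$, and so your variance estimate is $\sum_i\Var\lesssim (n/k)^2$, with the dominant contribution coming from small $i$ (where $\Var(Y_i)=(n/2ki)^2$ is largest). Plugging $a\asymp(n/k)\log(m/\ell)$, $\sigma^2\asymp(n/k)^2$, $C\asymp(n/k)m^{-\delta}$ into Freedman gives, as you correctly compute, $\exp(-\Theta(\log^2(m/\ell)))$. You then assert this ``is at most $\exp(-m^{\delta/2})$ for $m$ large.'' That is false: $\log^2(m/\ell)\ll m^{\delta/2}$, so $\exp(-\Theta(\log^2 m))$ is strictly \emph{weaker} than the stated bound $\exp(-\delta^2 m^{\delta/2})$, by a factor that is super-polynomial in $m$. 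This matters downstream --- the super-polynomial strength of the bound is exactly what is needed in Proposition~\ref{prop:b:lower} to beat the union bound over rectangles.

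The fix, which is the one idea your proposal is missing, is to observe that since $T^*_m(A)\ge\sum_{i=2\ell}^m Y_i\ge\sum_{i=2\ell'}^m Y_i$ for any $\ell'\ge\ell$, one may freely \emph{discard} the troublesome low-index terms before applying Freedman. Setting $\ell'=\max\{\ell,m^{\delta/2}\}$, the discarded mean is negligible ($\log(m/\ell')\ge(1-\delta/2)\log(m/\ell)$), while the variance drops to $\sum_{i\ge 2\ell'}(n/2ki)^2\le n^2/(k^2\ell')$ --- a factor $1/\ell'$ smaller than yours --- and the truncation threshold $\frac{n}{2k\ell'}\log(m/\ell)$ now genuinely dominates $\E[Y_i]=\frac{n}{2ki}\le\frac{n}{4k\ell'}$ for every retained index, so no term is distorted by the truncation. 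Freedman then yields $\exp(-\Theta(\delta\ell'))$ directly, which is the claimed bound. (A secondary issue with your plan: with your cutoff $L=(n/2k)m^{-\delta}$, the probability $\Pr(Y_i>L)=e^{-im^{-\delta}}$ is close to $1$ for $i\ll m^\delta$, not ``exponentially small in $m^\delta$,'' so the truncation materially changes the mean of the low-index terms and one cannot cheaply argue the truncation error is negligible; discarding those indices outright sidesteps this as well.) Your discussion of the coupling and the perimeter-preservation of $2$-neighbour infections is sound and matches the paper.
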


We will prove Lemmas~\ref{lem:acc:upper} and~\ref{lem:acc:lower} using Freedman's inequality, Lemma~\ref{lem:Freedman}. 


\subsection{Upper bounds on the growth of a droplet}\label{sec:AC:upper}

We begin the proof of Lemma~\ref{lem:acc:upper} by making a simple, but key, observation. Let $Y_1,Y_2,\dots$ and $Z_1,Z_2,\dots$ be independent random variables such that
\[
Y_i \sim \Exp\left(\frac{n}{2ki}\right) \qquad \text{and} \qquad Z_i \sim \Exp(1)
\]
for each $i \in \N$.

\begin{obs}\label{obs:basiccoupling}
There exists a coupling such that
\[
T_m \leq \sum_{i=2}^m Y_i + \sum_{i=1}^{m^2} Z_i
\]
for every $m \in \N$.
\end{obs}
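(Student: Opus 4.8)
The plan is to build the coupling by following the growth of the droplet from a single nucleation at the origin, bounding the time to increase the semi-perimeter by one at each step, and then summing the bounds. Concretely, I would run the Kesten--Schonmann process and track the smallest rectangle $R_i$ containing the infected set at the moment its semi-perimeter first reaches $i$; the droplet starts as the single site $\mathbf{0}$, which is a (degenerate) rectangle of semi-perimeter $2$, so the process begins at $i=2$ and I want to control the time to pass from semi-perimeter $i$ to semi-perimeter $i+1$, for $i = 2, \ldots, m-1$.

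The key point is the two-phase description of one such step, exactly as in the heuristic around~\eqref{eq:sketch:Tapprox}. When the current droplet is a rectangle of semi-perimeter $i$, a new infection on one of its (at most four, total length at most $i$) sides occurs at rate at least $k/n$ per boundary site adjacent to the droplet --- here I use that such a site has exactly one infected neighbour --- so the waiting time for the \emph{first} $1$-neighbour infection on the boundary is stochastically dominated by an $\Exp(n/2ki)$ variable (the constant $2$ comes from bounding the number of relevant boundary sites below by $i/2$, which suffices and matches the definition of $Y_i$). Once this first new infection appears, to complete a full new row or column --- and hence reach semi-perimeter $i+1$ --- it suffices to fill in the remaining sites of that row/column one at a time; each such site then has two infected neighbours (the newly infected site propagating along the side, together with the old droplet), hence is infected at rate $1$, so each fill-in step is dominated by an $\Exp(1)$ variable, and there are at most $i \le m$ of them. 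Thus the time for step $i$ is dominated by $Y_i + (Z_{j} + \cdots)$ using at most $m$ fresh copies of $\Exp(1)$; summing over $i$ from $2$ to $m-1$ uses the $Y_i$'s once each and at most $m \cdot m = m^2$ of the $Z$'s in total. By relabelling and using independence, one obtains a coupling in which $T_m \le \sum_{i=2}^m Y_i + \sum_{i=1}^{m^2} Z_i$.

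To make this rigorous I would invoke the standard fact that a continuous-time Markov jump chain can be coupled with a sum of exponentials by revealing its jumps one at a time: at each jump, the holding time is exponential with the current total rate, and conditioning on which transition occurs does not change the holding-time distribution (memorylessness). Running the chain and, at each relevant epoch, either "charging" a new $Y_i$ (when we are waiting for the first new boundary infection while at semi-perimeter $i$) or a new $Z_j$ (when we are filling in an already-started row/column), and using that the true rates are at least as large as $2ki/n$ resp. $1$ in these two situations, gives the stochastic domination step by step; a routine coupling lemma (monotone coupling of exponentials with ordered rates) then upgrades "stochastically dominated" to "almost surely dominated on a common probability space". The main --- though still minor --- obstacle is purely bookkeeping: one must be careful that the four sides are handled simultaneously and that a single step of the semi-perimeter may require first-infection events on more than one side, so the count of $\Exp(1)$ fill-in variables per unit increase of $\phi$ is genuinely at most $m$ (not just "at most $i$ on one side"); once that is set up, the bound $\sum_{i\le m^2} Z_i$ absorbs everything comfortably. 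No deeper idea is needed: the observation is a clean translation of~\eqref{eq:sketch:Tapprox} into a pathwise coupling.
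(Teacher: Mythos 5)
Your argument is essentially the paper's own: track a growing fully-infected rectangle, charge an exponential $Y_i$ for the first $1$-neighbour infection at semi-perimeter $i$, charge at most $i\le m$ copies of $\Exp(1)$ to fill in the new row/column, and sum. The only defect is the parenthetical justification of the constant $2$: you say it ``comes from bounding the number of relevant boundary sites below by $i/2$,'' but $i/2$ sites at rate $k/n$ would give total rate only $ki/2n$, hence a waiting time dominated by $\Exp(2n/ki)$, which is a factor of $4$ too slow to be bounded by $Y_i\sim\Exp(n/2ki)$. The correct count is that a fully-infected $a\times b$ rectangle with $a+b=i$ has exactly $2i$ sites with precisely one infected neighbour (two sides of length $a$ and two of length $b$), so the total rate is exactly $2ki/n$ and the waiting time is exactly $\Exp(n/2ki)=Y_i$. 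With that correction the proof matches the paper's.
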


\begin{proof}
Let $R$ be a rectangle with semi-perimeter $i$. The time it takes for a new 1-neighbour infection to arrive on the side of $R$ is an exponential random variable with mean $n / 2ki$, and the time for the infection to spread along the side of $R$ is bounded above by the sum of $i$ independent exponentially distributed random variables with mean $1$. Since the new rectangle $R' \supset R$ thus formed has semi-perimeter $i+1$, the observation follows.
\end{proof}

We need two more standard bounds, which we will use several times.

\begin{lemma}\label{lem:poisson}
For every $\eps > 0$ there exists $\delta > 0$ such that the following holds for every $\lambda > 0$ and all sufficiently large $s \in \N$. Let $X_1,\dots,X_s$ be independent $\Exp(\lambda)$ random variables.
\begin{itemize}
\item[$(a)$] If $\lambda s \leq (1-\eps) t$, then
$$\P\bigg[ \sum_{i=1}^s X_i \geq t \bigg] \leq e^{-\delta s}.$$
\item[$(b)$] If $\lambda s \geq e^2 t$, then
$$\P\bigg[ \sum_{i=1}^s X_i \leq t \bigg] \leq \bigg( \frac{et}{\lambda s} \bigg)^{s}.$$
\end{itemize}
\end{lemma}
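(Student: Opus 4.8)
The plan is to prove both parts by a standard Chernoff (exponential moment) argument, after rescaling to the standard exponential. Since $X_i \sim \Exp(\lambda)$ means $X_i/\lambda \sim \Exp(1)$, I would write $Y_i = X_i/\lambda$ and $u = t/\lambda$, so that $\sum_{i=1}^s X_i \ge t$ if and only if $\sum_{i=1}^s Y_i \ge u$, and likewise for the lower tail; hence it suffices to treat the case $\lambda = 1$. Recall $\E[e^{\theta Y_i}] = (1-\theta)^{-1}$ for $\theta < 1$ and $\E[e^{-\theta Y_i}] = (1+\theta)^{-1}$ for $\theta > 0$.

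For part $(a)$ the hypothesis becomes $s \le (1-\eps)u$, i.e. $u = \alpha s$ with $\alpha \ge (1-\eps)^{-1} > 1$. Applying Markov's inequality to $e^{\theta \sum_i Y_i}$ with the optimal choice $\theta = 1 - s/u \in (0,1)$ gives
\[
\P\bigg[ \sum_{i=1}^s Y_i \ge u \bigg] \le e^{-\theta u}(1-\theta)^{-s} = \exp\!\big( s\log(u/s) - (u-s) \big) = \exp\!\big( -s(\alpha - 1 - \log\alpha) \big).
\]
Since $g(\alpha) := \alpha - 1 - \log\alpha$ is increasing on $[1,\infty)$ with $g(1) = 0$, one takes $\delta := g\big((1-\eps)^{-1}\big) > 0$, which depends only on $\eps$, and the bound $e^{-\delta s}$ follows.

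For part $(b)$ the hypothesis becomes $u \le s/e^2 < s$. Applying Markov's inequality to $e^{-\theta \sum_i Y_i}$ with $\theta = s/u - 1 > 0$ gives
\[
\P\bigg[ \sum_{i=1}^s Y_i \le u \bigg] \le e^{\theta u}(1+\theta)^{-s} = \exp\!\big( s\log(u/s) + (s-u) \big) \le e^s (u/s)^s = (eu/s)^s,
\]
as required, where the last inequality just drops the term $-u \le 0$. (Alternatively one could note that $\{\sum_{i=1}^s Y_i \le u\}$ is exactly the event that a rate-$1$ Poisson process has at least $s$ points in $[0,u]$ and bound the Poisson tail via Stirling, but the computation above is cleaner.)

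I do not expect a real obstacle here: both inequalities are routine exponential-moment estimates. The only mild points to check are that the optimizing $\theta$ lies in the admissible range — guaranteed by $u > s$ in $(a)$ and $u < s$ in $(b)$, both forced by the hypotheses with room to spare — and that $\delta$ in part $(a)$ can be taken as a function of $\eps$ alone, which the explicit formula $\delta = g\big((1-\eps)^{-1}\big)$ makes transparent. In fact the argument yields the conclusions for every $s \ge 1$, so the hypothesis that $s$ be sufficiently large is not actually needed.
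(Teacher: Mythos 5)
Your proof is correct. The paper takes a slightly different route: it reduces both tail bounds to the identity $\P\bigl(\sum_{i=1}^s \Exp(\lambda) \le t\bigr) = \P\bigl(\textup{Po}(t/\lambda) \ge s\bigr)$ and then invokes standard Poisson tail estimates, whereas you apply the Chernoff/exponential-moment method directly to the sum of exponentials. The two are essentially dual formulations of the same large-deviations calculation, so neither buys much over the other in this context; your version has the small advantage of being fully self-contained (no Poisson tail lemma needed), and you are right that it in fact works for all $s \ge 1$, so the ``sufficiently large $s$'' clause in the statement is not actually required. One cosmetic nit: in part $(b)$ the constant $e^2$ in the hypothesis is never used beyond forcing $u < s$ (so that $\theta = s/u - 1 > 0$); your derivation gives the bound $(eu/s)^s$ for all $u < s$, and the $e^2$ merely guarantees this bound is nontrivially small. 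It would be worth a half-sentence to note this, since a reader might otherwise wonder where the $e^2$ went.
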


\begin{proof}
Writing $\textup{Po}(\mu)$ for a Poisson random variable with mean $\mu$, both inequalities follow easily from the fact that $\Pr\big( \sum_{i = 1}^s \Exp(\lambda) \le t \big) = \Pr\big( \textup{Po}(t/\lambda) \ge s \big)$.
\end{proof}

We can now prove Lemma~\ref{lem:acc:upper}.

\begin{proof}[Proof of Lemma~\ref{lem:acc:upper}]
By Observation~\ref{obs:basiccoupling}, it suffices to prove that
\[
\P\bigg( \sum_{i=2}^m Y_i + \sum_{i=1}^{m^2} Z_i \geq (1+\delta)\frac{n}{2k}\log m \bigg) \leq \exp\Big( -\delta^2\sqrt{\log m}\Big).
\]
Moreover, by Lemma~\ref{lem:poisson}, and since $m^2 \leq (\delta n / 5k) \log m$, we have
\[
\P\bigg( \sum_{i=1}^{m^2} Z_i > \frac{\delta n}{4k} \log m \bigg) \leq e^{-\Omega(m^2)},
\]
so in fact it will suffice to prove that
\begin{equation}\label{eq:acc:upper:suff}
\P\bigg( \sum_{i=2}^m Y_i \geq \bigg( 1 + \frac{\delta}{2} \bigg) \frac{n}{2k}\log m \bigg) \leq \exp\Big( -2\delta^2 \sqrt{\log m} \Big).
\end{equation}
To prove~\eqref{eq:acc:upper:suff}, we will use Freedman's inequality. In order to do so, we need to define a sequence of independent random variables $X_1,\ldots,X_m$ with $\E[X_i] \le 0$ and $|X_i|$ bounded above for each $i \in [m]$, so set
\begin{equation}\label{def:Xi:accupper}
X_i = \min\left\{ Y_i - \E [Y_i], \, \frac{n}{k}\sqrt{\log m} \right\}
\end{equation}
for each $i \in [m]$. Since $\sum_{i=2}^m \E[Y_i] \le (n/2k) \log m$, we have
\begin{multline}\label{eq:Yismallupper}
\P\bigg( \sum_{i=2}^m Y_i \geq \bigg( 1 + \frac{\delta}{2} \bigg) \frac{n}{2k}\log m \bigg) \\
\leq \P\bigg( \sum_{i=2}^m X_i \geq \frac{\delta n}{4k}\log m \bigg) + \P\bigg( Y_i \geq \frac{n}{k}\sqrt{\log m} \, \text{ for some } i \in [m] \bigg),
\end{multline}
so it suffices to bound the two terms on the right. For the first, Freedman's inequality with $C = (n/k)\sqrt{\log m}$ and $a=(\delta n/4k)\log m$ implies that
\begin{align}
\P\bigg( \sum_{i=2}^m X_i \geq \frac{\delta n}{4k}\log m \bigg) &\leq \exp\left(-\frac{(\delta n/4k)^2 (\log m)^2}{(n/k)^2 + (\delta/2) (n/k)^2 (\log m)^{3/2}} \right) \notag \\
& \leq \exp\left(-\frac{\delta \sqrt{\log m}}{9}\right), \label{eq:Yismallupper1}
\end{align}
since
\[
\sum_{i=2}^m \Var (X_i) \leq \sum_{i=2}^m \Var (Y_i) = \sum_{i=2}^m \left(\frac{n}{2ki}\right)^2 \leq \frac{n^2}{2k^2}.
\]
For the second, the union bound implies that 
\begin{equation}\label{eq:Yismallupper2}
\P\bigg( \bigcup_{i = 1}^m \Big\{ Y_i \geq \frac{n}{k}\sqrt{\log m} \Big\} \bigg) \leq \sum_{i=2}^m \exp\Big(-2i\sqrt{\log m}\Big) \leq \exp\big(-\sqrt{\log m}\big). 
\end{equation}
Combining \eqref{eq:Yismallupper1} and \eqref{eq:Yismallupper2} with \eqref{eq:Yismallupper}, and recalling that $\delta$ is sufficiently small, gives
\[
\P\bigg( \sum_{i=2}^m Y_i \geq \bigg( 1 + \frac{\delta}{2} \bigg) \frac{n}{2k}\log m \bigg) \leq \exp\left(-\frac{\delta \sqrt{\log m}}{10}\right)  \leq \exp\Big( -2\delta^2 \sqrt{\log m} \Big),
\]
which proves \eqref{eq:acc:upper:suff}, and hence the lemma.
\end{proof}

\pagebreak

\subsection{Lower bounds on the growth of a droplet}\label{sec:AC:lower}

Recall that $Y_1,Y_2,\dots$ are independent random variables with $Y_i \sim \Exp\big( n / 2ki \big)$ for each $i \in \N$. We begin with a simple but key observation, cf. Observation~\ref{obs:basiccoupling}.

\begin{obs}\label{obs:basiccoupling:lower}
For every $A \subset S(n)$ with $|A| \le \ell$, there exists a coupling such that
\[
T^*_m(A) \geq \sum_{i=2\ell}^m Y_i
\]
for every $m \in \N$.
\end{obs}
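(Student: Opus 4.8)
The plan is to follow the infected set in the modified Kesten--Schonmann process started from $A$ and to control the growth of its \emph{total semi-perimeter}. Since all $2$-neighbour infections are instantaneous, at every time $t\ge 0$ the infected set is closed under the $2$-neighbour bootstrap rule and hence (as $A$ is finite) is a disjoint union of rectangles; write $\Phi(t)$ for the sum of their semi-perimeters. Because merging two rectangles into their bounding box never increases the total semi-perimeter, the total semi-perimeter of $\<A\>$ is at most $\sum_{a\in A}\phi(\{a\}) = 2|A| \le 2\ell$. Here I am using the elementary estimate $\phi(\<R'\cup R''\>)\le \phi(R')+\phi(R'')$, valid whenever $R'$ and $R''$ merge under the $2$-neighbour rule, which one checks by treating the two coordinate directions separately: a gap of $1$ between the projections in one direction is forced to be matched by a genuine overlap of the projections in the other, since otherwise the two rectangles would not merge.

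Next I would examine how $\Phi$ evolves. The only transitions are $1$-neighbour infections. Conditional on the current configuration having total semi-perimeter $\Phi$, the number of sites with exactly one infected neighbour is exactly $2\Phi$: each component rectangle $R$ contributes $2\phi(R)$ such sites, and no site is adjacent to two distinct rectangles (such a site would already have been infected by the $2$-neighbour rule). Hence the next $1$-neighbour infection occurs at rate $2\Phi k/n$. When it occurs, the rectangle it touches gains exactly one full row or column --- so its semi-perimeter rises by exactly $1$ --- and the ensuing instantaneous $2$-neighbour closure only merges rectangles, which by the bounding-box estimate does not increase the total. Thus $\Phi$ increases by at most $1$ at each $1$-neighbour infection, and in particular $\Phi\le 2\ell+j$ once $j$ of them have occurred.

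It follows that $T^*_m(A)$ is at least the time of the $(m-2\ell)$-th $1$-neighbour infection. Since $\Phi\le 2\ell+j-1$ immediately before the $j$-th one, its rate there is at most $2(2\ell+j-1)k/n$, so the holding time preceding it stochastically dominates an $\Exp\big(n/2(2\ell+j-1)k\big)$ variable; carrying out this domination one step at a time, with independent variables, yields a single coupling in which
\[
T^*_m(A) \,\ge\, \sum_{j=1}^{m-2\ell}\Exp\Big(\frac{n}{2(2\ell+j-1)k}\Big) \,=\, \sum_{i=2\ell}^{m} Y_i
\]
for every $m\in\N$, the last equality being the substitution $i=2\ell+j-1$ (up to the harmless endpoint term), which is the assertion of the observation.

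The argument is essentially routine. The only place calling for any care is the bounding-box inequality $\phi(\<R'\cup R''\>)\le\phi(R')+\phi(R'')$ and, relatedly, the fact that a single $1$-neighbour infection cannot raise $\Phi$ by more than $1$ even when it triggers a chain of mergers. Everything else --- the disjoint-union-of-rectangles structure of a $2$-neighbour-closed set, the exact count $2\Phi$ of $1$-neighbour-active sites, and the standard ``dominate each inter-jump time by an exponential of the maximal rate, taken independently'' coupling --- is standard.
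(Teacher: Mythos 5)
Your proof is correct and follows essentially the same route as the paper: track the total semi-perimeter $\Phi$ of the (rectangular) components, observe that it starts at most $2\ell$ and rises by at most one per $1$-neighbour infection (because the instantaneous $2$-neighbour closure only merges rectangles, which cannot increase $\Phi$), note that $2\Phi$ is exactly the number of $1$-neighbour-active sites, and couple the inter-arrival times with the $Y_i$. The paper's own proof is a terser rendition of this exact argument, and you have also correctly flagged the harmless endpoint off-by-one, which is present in the statement as printed.
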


\begin{proof}
Observe that the total semi-perimeter of the infected squares is initially at most $2\ell$, and is not increased by 2-neighbour infections. Moreover, the number of sites that can be infected via a 1-neighbour infection is exactly the total perimeter of the currently infected sites (since 2-neighbour infections are instantaneous). Therefore, the time taken for the total semi-perimeter to increase from $i$ to $i+1$ can be coupled with an exponential random variable with mean $n / 2ki$, as required.
\end{proof}

The proof of Lemma~\ref{lem:acc:lower} is similar to that of Lemma~\ref{lem:acc:upper}. In particular, we will again use Freedman's inequality, but in order to obtain the required super-polynomial bound we will need to be somewhat more careful. 

\begin{proof}[Proof of Lemma~\ref{lem:acc:lower}]
Recall that $A\subset S(n)$ has size at most $\ell\leq e^{-1/\delta^2} m$. To begin, set $\ell' = \max\{ 2\ell, m^{\delta/2} \}$, and note that, by Observation~\ref{obs:basiccoupling:lower}, it suffices to prove that
\[
\P\left( \sum_{i = \ell'}^m Y_i \leq (1-\delta)\frac{n}{2k}\log\frac{m}{\ell} \right) \leq e^{-\delta^2 \ell'}.
\]
Now, with Freedman's inequality in mind, set
$$X_i = \min\Big\{ Y_i - \E [Y_i], \, \frac{n}{k\ell'} \log \frac{m}{\ell} \Big\}$$
for each $i \in \N$, and observe that 
$$\sum_{i=\ell'}^m X_i \, \le \sum_{i=\ell'}^m Y_i - \sum_{i=\ell'}^m \E[Y_i] \, \le \, \sum_{i=\ell'}^m Y_i - \bigg( 1 - \frac{\delta}{2} \bigg) \frac{n}{2k}\log\frac{m}{\ell},$$
where the final inequality follows from
\begin{equation}\label{eq:ellprime}
\sum_{i=\ell'}^m \E[Y_i] \geq \frac{n}{2k}\log\frac{m}{\ell'} \geq \left(1-\frac{\delta}{2}\right)\frac{n}{2k}\log\frac{m}{\ell}.
\end{equation}
It will therefore suffice to prove that
\begin{equation}\label{eq:acc:lower:suff}
\P\left( \sum_{i=2\ell'}^m X_i \leq - \frac{\delta n}{4k}\log\frac{m}{\ell} \right) \leq e^{-\delta^2 \ell'}.
\end{equation}
To prove~\eqref{eq:acc:lower:suff}, we will apply Freedman's inequality to the sequence $-X_{2\ell'}',\ldots,-X'_m$, where 
$$X_i' = X_i - \E [X_i].$$
Note that $\Ex[X'_i] = 0$, and that  
$$- \frac{n}{4k \ell'} \, \le \, X_i' \, \le \, \frac{n}{k\ell'} \log \frac{m}{\ell}$$
for every $i \ge 2\ell'$, since
\[
-\frac{n}{4k\ell'} \le -\frac{n}{2ki} = -\E[Y_i] \le X_i \le \frac{n}{2k\ell'} \log \frac{m}{\ell}
\]
and $-n/4k\ell' \le \Ex[X_i] \le 0$. Furthermore, $\Var(X_i') = \Var(X_i) \leq \Var(Y_i)$, which implies that
\[
\sum_{i=2\ell'}^m \Var(X_i') \leq \sum_{i=2\ell'}^m \left(\frac{n}{2ki}\right)^2 \leq \frac{n^2}{k^2 \ell'}.
\]
Therefore, applying Freedman's inequality, we obtain
\begin{align}
\P\left( \sum_{i=2\ell'}^m X'_i \leq - \frac{\delta n}{8k}\log\frac{m}{\ell} \right) & \leq \exp\left(-\frac{(\delta n / 8k)^2 (\log m/\ell)^2}{n^2 / k^2 \ell' + (2n / k \ell')(\delta n / 8k)(\log m/\ell)^2} \right) \nonumber\\
&\leq \exp\left(- \delta \ell' / 32 \right), \label{eq:Xprimebound}
\end{align}
since $\log m/\ell \geq 1/\delta$ and $m$ is sufficiently large. Finally, note that
\begin{align*}
\sum_{i = 2\ell'}^m \E[X_i] & \, \ge \, - \sum_{i = 2\ell'}^m \Pr\bigg( Y_i \ge \frac{n}{2k\ell'} \log \frac{m}{\ell} \bigg) \Ex[Y_i] \\
& \, \ge \, - \sum_{i = 2\ell'}^m \frac{\ell}{m} \cdot \frac{n}{2ki} \, \ge \, - \frac{\ell}{m} \cdot \frac{n}{2k} \log \frac{m}{\ell'} \, \ge \, - \frac{\delta n}{8k} \log\frac{m}{\ell},
\end{align*}
where the final inequality follows from the definition of $\ell'$ (cf.~\eqref{eq:ellprime}), and since $\delta$ was chosen sufficiently small, so $\ell \le e^{-1/\delta^2} m \le \delta m/8$. It follows easily that
$$\sum_{i=2\ell'}^m X'_i \, = \, \sum_{i=2\ell'}^m X_i - \sum_{i=2\ell'}^m \E[X_i] \, \le \, \sum_{i=2\ell'}^m X_i  + \frac{\delta n}{8k}\log\frac{m}{\ell},$$
and so~\eqref{eq:Xprimebound} implies~\eqref{eq:acc:lower:suff}, which completes the proof of the lemma.  
\end{proof}

Finally, let us note the following two easy lemmas, which bound the probability that a rectangle contains too many nucleations. Let $P_t(R,\ell)$ denote the probability that exactly $\ell$ nucleations occur in $R$ by time $t$. We will use the following bounds in conjunction with Lemma~\ref{lem:acc:lower}. 

\begin{lemma}\label{lem:countingnucleations}
Let $R \subset S(n)$ be a rectangle of semi-perimeter $m$, and suppose that $t \le \frac{n}{4k}\log \big( \frac{k}{\log n} \big)$. Then
$$P_t(R,\ell) \, \le \, \bigg( \frac{m^2 \log \big( \frac{k}{\log n} \big)}{4\ell k} \bigg)^\ell.$$
In particular, if $\ell \ge (m^2/k) \log \big( \frac{k}{\log n} \big)$ and $m \ge \sqrt{k \log n}$, then 
$$P_t(R,\ell) \le n^{-\log (\frac{k}{\log n})},$$
and if $m \le \sqrt{k / \log \big( \frac{k}{\log n} \big)}$ and $\ell \ge \log k$, then
$$P_t(R,\ell) \le k^{-\log\log k}.$$
\end{lemma}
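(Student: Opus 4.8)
The plan is to bound $P_t(R,\ell)$ by a straightforward first-moment computation and then feed in the numerical hypotheses. First I would observe that nucleations in $R$ arrive as a Poisson process of rate $|R|/n$, where $|R| \le m^2$ is the number of sites of $R$ (using $\phi(R) = m$ and the isoperimetric bound $|R| \le (m/2)^2 \cdot \text{const}$; in fact $|R|\le m^2/4$ suffices since a rectangle of semi-perimeter $m$ has area at most $m^2/4$). Hence the number $N$ of nucleations in $R$ by time $t$ is $\Po(\mu)$ with $\mu = |R| t / n \le m^2 t / n$. Then
\[
P_t(R,\ell) \, = \, \Pr(N = \ell) \, = \, e^{-\mu}\frac{\mu^\ell}{\ell!} \, \le \, \frac{\mu^\ell}{\ell!} \, \le \, \Big( \frac{e\mu}{\ell} \Big)^\ell \, \le \, \Big( \frac{e m^2 t}{n \ell} \Big)^\ell,
\]
using $\ell! \ge (\ell/e)^\ell$. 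Substituting the hypothesis $t \le \frac{n}{4k}\log\big(\frac{k}{\log n}\big)$ gives $\frac{em^2 t}{n\ell} \le \frac{e m^2 \log(k/\log n)}{4\ell k}$; absorbing the factor $e$ (this is where a small amount of care is needed — the clean bound $\frac{m^2\log(k/\log n)}{4\ell k}$ as stated drops the $e$, so I would either note that $|R|\le m^2/4$ already, or more honestly that $|R|\le m^2/2$ together with $e/2 < \nobreak$ the constant works, or simply tighten the area bound to $m^2/(2e)$ via $|R|\le \lfloor m/2\rfloor^2$). In any case the first displayed bound of the lemma follows.

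For the two ``in particular'' statements I would just substitute. If $\ell \ge (m^2/k)\log(k/\log n)$, then the base $\frac{m^2\log(k/\log n)}{4\ell k} \le \frac14$, so $P_t(R,\ell) \le 4^{-\ell} \le 4^{-(m^2/k)\log(k/\log n)}$. Now using $m \ge \sqrt{k\log n}$ we get $m^2/k \ge \log n$, hence $4^{-\ell} \le 4^{-\log n \cdot \log(k/\log n)} \le n^{-\log(k/\log n)}$ (taking logs base $e$: $4^{-\log n \cdot L} = e^{-(\log 4)(\log n) L} \le e^{-(\log n) L} = n^{-L}$ with $L = \log(k/\log n)$, since $\log 4 > 1$). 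For the second, if $m \le \sqrt{k/\log(k/\log n)}$ then $m^2 \log(k/\log n)/k \le 1$, so the base is at most $1/(4\ell) \le 1/(4\log k)$ once $\ell \ge \log k$; then $P_t(R,\ell) \le (4\log k)^{-\ell} \le (4\log k)^{-\log k} \le k^{-\log\log k}$, the last step because $(4\log k)^{\log k} = e^{(\log k)\log(4\log k)} \ge e^{(\log k)\log\log k} = k^{\log\log k}$ for $k$ large.

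There is essentially no obstacle here: the only thing to watch is the precise area-versus-semi-perimeter constant so that the factor $e$ from $\ell! \ge (\ell/e)^\ell$ is swallowed and the stated bound (with the clean constant $4$) holds exactly rather than up to a constant; I would pin this down by using $|R| \le \lceil m/2 \rceil^2 \le m^2/2$ for $m$ large together with $e/2 \cdot \frac{1}{?}$ — or, cleanest, note a rectangle with $\phi(R)=m$ has area at most $\lfloor m/2\rfloor \lceil m/2 \rceil \le m^2/4$, and replace $e$ by bounding $\ell! \ge (\ell/e)^\ell$ more crudely is unnecessary if instead one keeps $\mu^\ell/\ell! \le (e\mu/\ell)^\ell$ and checks $e \cdot \frac14 < 1$ suffices for the ``in particular'' parts (it does, since $e/4 < 1$), while for the headline bound one simply writes it with the understanding that $m^2/4$ already beats $e\mu$. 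I would present the headline bound as stated and verify the two consequences as above.
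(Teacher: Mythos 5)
Your proposal is correct and matches the paper's argument essentially step for step: the paper also bounds $P_t(R,\ell)$ by $\binom{m^2/4}{\ell}(t/n)^\ell \le (em^2 t/(4\ell n))^\ell$ (using $|R|\le m^2/4$ and $\binom{N}{\ell}\le (eN/\ell)^\ell$, so the factor $e/4<1$ absorbs the $e$ exactly as you noted), and then derives the two consequences by the same substitutions. Your Poisson-versus-binomial variation and your worked-out verifications of the numerical inequalities are fine and require no changes.
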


\begin{proof}
Note first that 
\begin{equation}\label{eq:PtRell}
P_t(R,\ell) \, \le \, {m^2/4 \choose \ell} \bigg( \frac{t}{n} \bigg)^\ell \, \le \, \bigg( \frac{em^2 t}{4\ell n} \bigg)^\ell \, \le \, \bigg( \frac{m^2 \log \big( \frac{k}{\log n} \big)}{4\ell k} \bigg)^\ell,
\end{equation}
since $t \le \frac{n}{4k}\log k$. It follows that, if $\ell \ge (m^2/k) \log \big( \frac{k}{\log n} \big)$ and $m \ge \sqrt{k \log n}$, then
$$P_t(R,\ell) \, \le \, \bigg( \frac{m^2 \log \big( \frac{k}{\log n} \big)}{4\ell k} \bigg)^\ell \, \le \, \exp\bigg( - \frac{m^2}{k} \log \left( \frac{k}{\log n} \right) \bigg) \, \le \, n^{-\log (\frac{k}{\log n})},$$
and if 
$m \le \sqrt{k / \log \big( \frac{k}{\log n} \big)}$ and $\ell \ge \log k$, then
$$P_t(R,\ell) \, \le \, \bigg( \frac{m^2 \log \big( \frac{k}{\log n} \big)}{4\ell k} \bigg)^\ell  \, \le \, \bigg( \frac{1}{\log k} \bigg)^{\log k} \, \le \, k^{-\log\log k},$$
as required.
\end{proof}

We will use the following variant in Section~\ref{sec:lowerTV}.

\begin{lemma}\label{lem:TVnucleations}
There exists $c > 0$ such that the following holds. Set 
$$t = c \cdot\bigg(\frac{n^2}{k\log (n/k)} \bigg)^{1/3}, \quad m = \big( kn \log (n/k) \big)^{1/6} \quad \text{and} \quad M = m \sqrt{\log (n/k)}.$$
If $k \ll n$, then
$$P_t\big( S(M),\log(n/k) \big) \ll \left(\frac{k}{n}\right)^{4} \quad \text{and} \quad P_t\big(S(m), \log(n/k)^{1/3} \big) \ll \left( \frac{1}{\log (n/k)} \right)^{4}.$$
\end{lemma}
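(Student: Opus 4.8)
The plan is to repeat the elementary first-moment computation used in Lemma~\ref{lem:countingnucleations}, now specialised to the terminal-velocity scaling. Recall that in a box $S(N)$ with $|S(N)| \le (2N+1)^2 \le 9N^2$ sites, the probability that a given collection of $\ell$ specified sites all nucleate by time $t$ is at most $(t/n)^\ell$, so $P_t(S(N),\ell) \le \binom{9N^2}{\ell}(t/n)^\ell \le \big( 9eN^2 t / (\ell n) \big)^\ell$. I would plug in the stated values $t = c (n^2 / (k\log(n/k)))^{1/3}$, $m = (kn\log(n/k))^{1/6}$ and $M = m\sqrt{\log(n/k)}$, and simply check that each of the two resulting bases is bounded away from $1$ (indeed is $o(1)$ once $c$ is small enough), and that the exponent $\ell$ is large enough to beat the target powers of $k/n$ and $1/\log(n/k)$.

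For the first bound, take $N = M$ and $\ell = \log(n/k)$. Then $M^2 = m^2 \log(n/k) = (kn\log(n/k))^{1/3}\log(n/k)$, and $M^2 t / n = c\,(kn\log(n/k))^{1/3}\log(n/k)\cdot(n^2/(k\log(n/k)))^{1/3}/n = c\,n^{1/3}k^{1/3}(\log(n/k))^{1/3}\cdot n^{2/3}k^{-1/3}(\log(n/k))^{-1/3}\cdot\log(n/k)/n = c\log(n/k)$. Hence $M^2 t/(\ell n) = O(c)$, so the base $9e\cdot M^2 t/(\ell n)$ is at most, say, $1/2$ for $c$ small, giving $P_t(S(M),\log(n/k)) \le 2^{-\log(n/k)} = (n/k)^{-\log 2}$. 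Since $\log 2 < 1 < 4$ this is not immediately strong enough, so here I would instead keep the base as $o(1)$: note $M^2 t /(\ell n) = c$, a constant, but we are free to replace the crude bound and observe the base is actually $\le 9ec$, which we can make as small as we like; more to the point, raising a base $\beta<1$ to the power $\log(n/k) \to \infty$ gives a quantity that is $o((k/n)^4)$ precisely when $\beta < e^{-4}\cdot(\text{something} \to \infty)$ — so the clean route is: choose $c$ so that $9ec \le e^{-5}$, whence $P_t(S(M),\log(n/k)) \le e^{-5\log(n/k)} = (k/n)^5 \ll (k/n)^4$. The analogous computation for $N = m$, $\ell = (\log(n/k))^{1/3}$ gives $m^2 t/n = c\log(n/k)$ as well (since $m^2 = M^2/\log(n/k)$ and the previous line gave $M^2t/n = c\log(n/k)^{?}$ — recompute: $m^2 t/n = c\,(kn\log(n/k))^{1/3}(n^2/(k\log(n/k)))^{1/3}/n = c(\log(n/k))^{1/3}(\log(n/k))^{-1/3} = c$), so $m^2 t/(\ell n) = c/(\log(n/k))^{1/3} \le c$, and $P_t(S(m),(\log(n/k))^{1/3}) \le (9ec)^{(\log(n/k))^{1/3}} \le e^{-5(\log(n/k))^{1/3}} \ll (1/\log(n/k))^4$, since $(\log(n/k))^{1/3}\to\infty$ faster than $\tfrac{4}{5}\log\log(n/k)$.

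There is essentially no obstacle here: the lemma is a pure union-bound estimate, and the only mild subtlety — the one place to be careful — is bookkeeping the exponents so that in each case the base of the power is a constant strictly less than $e^{-5}$ (achieved by shrinking the absolute constant $c$), while the exponent $\ell$ tends to infinity; one then reads off $(9ec)^\ell \le e^{-5\ell}$ and compares $e^{-5\ell}$ against $(k/n)^4$ and $(\log(n/k))^{-4}$ respectively using $\ell = \log(n/k)$ and $\ell = (\log(n/k))^{1/3}$. I would present it as a two-line corollary of the chain $P_t(S(N),\ell)\le\big(9eN^2t/(\ell n)\big)^\ell$ followed by the substitution of the three scaling relations and the choice of $c$.
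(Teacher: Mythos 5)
Your proposal is correct and takes essentially the same route as the paper: bound $P_t(S(N),\ell)$ by $\binom{O(N^2)}{\ell}(t/n)^\ell \le (O(1)\,N^2 t/(\ell n))^\ell$, substitute the scaling relations (which make $N^2 t/(\ell n)$ equal to $c$ in the first case and $c/(\log(n/k))^{1/3}$ in the second), and then shrink $c$ so the base is at most $e^{-5}$, after which the exponents $\ell = \log(n/k)$ and $\ell = (\log(n/k))^{1/3}$ comfortably beat $(k/n)^4$ and $(\log(n/k))^{-4}$. The paper presents the same computation slightly more compactly (writing the intermediate step in terms of $M^6/(kn\log(n/k))$) and with the constant $5$ in place of your $9$, but these differences are cosmetic.
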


\begin{proof}
Note first that, as in~\eqref{eq:PtRell}, we have
$$P_t(S(M),\ell) \, \le \, {5M^2 \choose \ell} \bigg( \frac{t}{n} \bigg)^\ell \le \, \bigg( \frac{(5ec)^3}{\ell^3} \cdot \frac{M^6}{k n \log (n/k) } \bigg)^{\ell/3} \le \, \bigg( \frac{5ec \log (n/k)}{\ell} \bigg)^{\ell},$$
since $t = c \cdot \big(\frac{n^2}{k\log (n/k)} \big)^{1/3}$ and $M = (kn)^{1/6} \big( \log (n/k) \big)^{2/3}$. It follows that, if $\ell = \log (n/k)$ and $c > 0$ is sufficiently small, then
$$P_t(S(M),\ell) \, \ll \, e^{-4\ell} = \, \left(\frac{k}{n}\right)^{4},$$
as claimed. Similarly, if $\ell = \log (n/k)^{1/3}$ and $c \le 1$ (say), then
$$P_t(S(m),\ell) \, \le \, \bigg( \frac{(5ec)^3}{\ell^3} \cdot \frac{m^6}{k n \log (n/k) } \bigg)^{\ell/3} \, \ll \, 2^{-\ell} \ll \, \left( \frac{1}{\log (n/k)} \right)^{4},$$
as required.
\end{proof}

\section{Terminal velocity regime}\label{sec:TV}

In this section we will show how to control the growth of a droplet once it has reached its terminal velocity. We begin with the following lemma, which we will use to prove the upper bounds in Theorem~\ref{thm:NG2d} when $k \gg \log n$. 

\begin{lemma}\label{lem:tv:upper}
There exists a constant $C > 0$ such that the following holds. Let $m = m(n) \in \N$ be such that $1 \ll \log m \ll \sqrt{n/k}$ and let $R$ be a rectangle of semi-perimeter $2\min\big\{ m, \sqrt{n/k} \big\}$ such that $R \cap S(m)\neq\emptyset$. Then
\[
S(m) \subset [R]_{Ct}
\]
with high probability, where $t = m \sqrt{n / k}$.
\end{lemma}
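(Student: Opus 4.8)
The plan is to grow the rectangle $R$ in stages, alternately extending it vertically and horizontally by a bounded multiple of $\sqrt{n/k}$ at each stage, and to show that each such stage takes time $O(\sqrt{n/k})$ with high probability, so that after $O(m/\sqrt{n/k})$ stages we have covered all of $S(m)$ within time $O(m\sqrt{n/k}) = O(t)$. Since $R$ has semi-perimeter $2\min\{m,\sqrt{n/k}\}$ and meets $S(m)$, at least one side of $R$ has length $\ell_0 := \min\{m,\sqrt{n/k}\}$, and without loss of generality it is a horizontal side; we take a sub-rectangle $R_0 \subset R$ of width $\ell_0$ that still meets $S(m)$ as our starting point. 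The key one-step estimate is the following: if $D$ is a rectangle with horizontal side of length $w \ge \sqrt{n/k}$ (say), then within time $C'\sqrt{n/k}$ the droplet grows vertically by at least $\sqrt{n/k}$ rows, with high probability (and symmetrically for horizontal growth when the vertical side is long). To see this, note that along a side of length $w \ge \sqrt{n/k}$, the number of $1$-neighbour infections that land on that side in time $s$ stochastically dominates a Poisson random variable of mean $s \cdot w \cdot k/n \ge s\sqrt{k/n}$; taking $s = \Theta(\sqrt{n/k})$ this mean is $\Theta(w\sqrt{k/n}) \ge \Theta(1)$, and in fact by choosing the constant large we get that with very high probability at least $\sqrt{n/k}$ such infections appear and are spaced so that, after a further time $\Theta(\sqrt{n/k})$ of $2$-neighbour filling (which spreads each seed sideways at rate $1$, so it travels distance $\Theta(\sqrt{n/k})$ in that time), an entire new row is added. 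Iterating this inside a window of $\Theta(\sqrt{n/k})$ consecutive rows handles the whole window.

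Concretely I would mimic the (commented-out) proof of Lemma \ref{lem:ar:fill}: define $R_1$ to be $R_0$ extended vertically by $\pm 2\ell_0$ (or by $\pm 2\sqrt{n/k}$, whichever description is cleaner given the two regimes $m \le \sqrt{n/k}$ and $m > \sqrt{n/k}$), show $R_1 \subset [R_0]_{C_1 t}$ w.h.p.\ by coupling the successive row-additions with sums of $O(\sqrt{n/k})$-many $\Exp(n/kw)$ waiting times plus $O(\sqrt{n/k})$-many $\Exp(1)$ spreading times and applying Lemma \ref{lem:poisson}; then define $R_2$ to be $R_1$ extended horizontally by $\pm 2\ell_0$ and show $R_2 \subset [R_1]_{C_1 t}$ in the same way; finally fill in the corners using $2$-neighbour infections, for which the diagonal-by-diagonal argument of Claim 3 in Lemma \ref{lem:ar:fill} shows $S(m)' \subset [R_1 \cup R_2]_{C_1 t}$ whenever $\max$ of $O(m^2)$ independent $\Exp(1)$'s is at most $t/(\text{something}\cdot m)$, which holds w.h.p.\ since $t = m\sqrt{n/k} \gg m$. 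When $m \le \sqrt{n/k}$ the rectangle $R$ already has semi-perimeter $2m$, so a single vertical and a single horizontal extension of size $2m$ suffice and $t = m\sqrt{n/k}$ is comfortably large; when $m > \sqrt{n/k}$ we need to repeat the vertical/horizontal extensions $O(m/\sqrt{n/k})$ times, taking a union bound over the $O(m/\sqrt{n/k}) = O(t/\sqrt{n/k}) \le O(m)$ stages, each failing with probability $o(1)$ — here one should be slightly careful to get the $o(1)$ uniform in the stage, e.g.\ by using the exponential tail $e^{-\Omega(\sqrt{n/k})}$ from Lemma \ref{lem:poisson}(a), which beats the polynomial number of stages since $\log m \ll \sqrt{n/k}$.

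The main obstacle is the bookkeeping at the ``$1$-neighbour seeds then $2$-neighbour spreading'' step: one has to argue that $\Theta(\sqrt{n/k})$ seeds appearing on a side of length $\ge \sqrt{n/k}$, within a time window of length $\Theta(\sqrt{n/k})$, together with sideways spreading at rate $1$ for a further $\Theta(\sqrt{n/k})$, genuinely cover an entire new row — the seeds are not evenly spaced, and near the ends of the side the geometry is slightly different. The clean way around this, exactly as sketched in Section \ref{sec:growth}, is to not insist that a full row is added in one shot but rather to give away another constant factor: wait time $\Theta(\sqrt{n/k})$, during which $1$-neighbour infections appear on the side with linear density $\Theta(\sqrt{k/n})$ so that the typical gap between consecutive seeds is $O(\sqrt{n/k})$, and then wait a further $\Theta(\sqrt{n/k})$ for the $2$-neighbour infections to bridge those gaps; a gap of length $g$ is bridged in time at most the sum of $g$ independent $\Exp(1)$'s, and $\max$ over the $O(\sqrt{n/k})$ gaps of such sums is $O(\sqrt{n/k})$ w.h.p.\ by Lemma \ref{lem:poisson}. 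Absorbing all these constants into a single $C$ gives the stated bound $S(m) \subset [R]_{Ct}$, and the whole argument uses only $\log m \ll \sqrt{n/k}$ (for the union bound over stages) and $\log m \gg 1$ (so that the Poisson concentration kicks in).
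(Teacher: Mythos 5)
Your overall approach is on the right track (terminal-velocity growth of a single droplet, built from $1$-neighbour seeds followed by $2$-neighbour filling), and it is essentially the same idea that drives the paper's proof, but there are two concrete errors in the way you have set it up.

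First, your ``key one-step estimate'' is off by a factor of $\sqrt{n/k}$. On a side of width $w = \Theta(\sqrt{n/k})$, a $1$-neighbour infection arrives at rate $\Theta(wk/n) = \Theta(\sqrt{k/n})$, so in time $\Theta(\sqrt{n/k})$ you see only $\Theta(1)$ new seeds, and — crucially — those seeds all land on the \emph{first} new row (seeds on row $i+1$ can only appear once part of row $i$ is infected). So in time $\Theta(\sqrt{n/k})$ the droplet grows by $\Theta(1)$ rows, not $\Theta(\sqrt{n/k})$; adding $\Theta(\sqrt{n/k})$ rows takes time $\Theta(n/k)$. This also makes the arithmetic in your opening paragraph inconsistent ($O(m/\sqrt{n/k})$ stages at $O(\sqrt{n/k})$ each gives $O(m)$, not $O(m\sqrt{n/k})$). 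The correct bookkeeping is the one the paper uses: one row per time $\Theta(\sqrt{n/k})$, hence $\Theta(m)$ rows in time $\Theta(m\sqrt{n/k}) = \Theta(t)$, and the paper does this in a \emph{single} application of Lemma~\ref{lem:poisson} to a sum of $4m$ i.i.d.\ $\Exp(\sqrt{n/k})$ waiting times plus $4sm$ i.i.d.\ $\Exp(1)$ filling times, rather than a union bound over $\Theta(m/\sqrt{n/k})$ stages.

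Second, and more seriously, you take $R_2$ to be ``$R_1$ extended horizontally'', i.e.\ $R_2 \supset R_1$. In that case $R_1 \cup R_2 = R_2$ is a rectangle, so there are no ``corners'' to fill, and when you iterate for $m \gg \sqrt{n/k}$ the droplet becomes an approximately square box of side $\Theta(m)$. Filling such a box row-by-row from single seeds costs $\Theta(m)$ $\Exp(1)$'s per row and $\Theta(m)$ rows, i.e.\ $\Theta(m^2)$ filling time, which dwarfs $t = m\sqrt{n/k}$ when $m \gg \sqrt{n/k}$ — exactly the regime in which the lemma is actually applied. The paper's $R_2$ is \emph{not} an enlargement of $R_1$: it is a \emph{thin horizontal strip of width $s = \min\{m,\sqrt{n/k}\}$} that crosses the thin vertical strip $R_1$. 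Because both arms of the cross have width $\leq \sqrt{n/k}$, the $\Exp(1)$ cost of adding each row or column is only $O(\sqrt{n/k})$, for a total of $O(m\sqrt{n/k}) = O(t)$. The four corner regions of $S(m) \setminus (R_1 \cup R_2)$ are then filled purely by $2$-neighbour infections, diagonal by diagonal, in time at most $4m \cdot \max\{Z_1,\dots,Z_{16m^2}\}$; it is precisely here that the hypothesis $\log m \ll \sqrt{n/k}$ is used, to make the maximum of $16m^2$ i.i.d.\ $\Exp(1)$'s smaller than $t/4m = \sqrt{n/k}/4$ with high probability. Your final paragraph (seeds at density $\sqrt{k/n}$, then $2$-neighbour bridging of the gaps) gives a genuinely different route that would let you add long rows in time $O(\sqrt{n/k})$ rather than $O(m)$, but the uniformity you need is delicate — the maximum gap among $\Theta(w\sqrt{k/n})$ Poisson spacings is $\Theta(\sqrt{n/k}\log(w\sqrt{k/n}))$, not $O(\sqrt{n/k})$, so this route picks up logarithmic factors that need to be handled carefully and are simply avoided by the cross-plus-corner construction.
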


We remark that the condition $\log m \ll \sqrt{n/k}$ is not necessary for the lemma to hold, but it simplifies the proof, and will always hold in our applications.

\begin{proof}
Set $s = \min\big\{ m, \sqrt{n/k} \big\}$, and note that at least one side of $R$ has length at least $s$: without loss of generality it is the horizontal side. We will show that, with high probability, $R$ grows vertically by at least $2m$ steps, and then horizontally by at least $2m$ steps, within time $C t$.

Let $R_0 = \{ (x,b) : a \le x < a + s \}$ be such that $R_0 \subset R$ and $R_0 \cap S(m) \ne \emptyset$, and define a new rectangle $R_1$ as follows:
$$R_1 = \big\{ (x,y) \,:\, a \le x < a + s \text{ and } b - 2m \le y \le b + 2m \big\}.$$
We first show that $R$ quickly grows to fill $R_1$.

\medskip
\noindent \textbf{Claim 1:} $R_1\subset [R_0]_{10t}$ with high probability.

\begin{proof}[Proof of claim]
Let $Y_1,Y_2,\dots$ be a sequence of independent $\Exp\big(\sqrt{n/k}\big)$ random variables, and let $Z_1,Z_2,\dots$ be an independent sequence of $\Exp(1)$ random variables. As in Observation~\ref{obs:basiccoupling}, there exists a coupling such that
\[
\P\Big( R_1\subset [R_0]_{10t} \Big) \geq \P\bigg( \sum_{i=1}^{4m} Y_i + \sum_{i=1}^{4sm} Z_i \leq 10t \bigg),
\]
where here we have used the fact that $s \ge \sqrt{n/k}$. But, noting that $sm \le t$ (and since $m\gg 1$), we have 
\[
\P\bigg( \sum_{i=1}^{4m} Y_i + \sum_{i=1}^{4sm} Z_i \geq 10t \bigg) \leq \P\bigg( \sum_{i=1}^{4m} Y_i \geq 5t \bigg) + \P\bigg( \sum_{i=1}^{4sm} Z_i \geq 5t \bigg) = o(1)
\]
by Lemma~\ref{lem:poisson}, so this proves the claim.
\end{proof}

Next, we define
$$R_2 = \big\{ (x,y) \,:\, a - 2m \le x < a + 2m + s \text{ and } b \le y < b + s \big\},$$
and show that $R_1$ quickly grows to fill $R_2$.

\pagebreak
\noindent \textbf{Claim 2:} $R_2 \subset [R_1]_{10t}$ with high probability.

\begin{proof}[Proof of claim]
Since $m \ge s$, this follows exactly as in the proof of Claim~1.
\end{proof}

Finally, we fill out the corners using 2-neighbour infections.

\medskip
\noindent \textbf{Claim 3:} $S(m)\subset [R_1 \cup R_2]_t$ with high probability.

\begin{proof}[Proof of claim]
Growing diagonal-by-diagonal, there exists a coupling such that
$$\max\big\{Z_1,\dots,Z_{16m^2}\big\} \leq \frac{t}{4m} \quad \Rightarrow \quad  S(m)\subset [R_1 \cup R_2]_t.$$
Since $\log m \ll \sqrt{n/k}$, we have
\[
\P\bigg( \max\big\{Z_1,\dots,Z_{16m^2}\big\} \leq \frac{t}{4m} \bigg) = \bigg(1-\exp\bigg(-\frac{t}{4m}\bigg)\bigg)^{16m^2} = 1-o(1),
\]
and so the claim follows.
\end{proof}

The lemma now follows from Claims~1,~2 and~3.
\end{proof}

For convenience, let us note the following immediate consequence of Lemmas~\ref{lem:acc:upper} and~\ref{lem:tv:upper}.

\begin{lemma}\label{lem:acc:upper:fill}
If $m = m(n) \in \N$ satisfies 
$$1 \, \ll \, m \, \ll \, \sqrt{\frac{n}{k} \log m},$$ 
then a single nucleation at the origin grows to contain the square $S(m)$ within time
$$\big( 1 + o(1) \big) \frac{n}{2k} \log m$$
with high probability as $n \to \infty$.
\end{lemma}

\begin{proof}
By Lemma~\ref{lem:acc:upper}, a single nucleation grows to completely infect a rectangle of semi-perimeter $2m$ within time $\big( 1 + o(1) \big) \frac{n}{2k} \log m$ with high probability. By Lemma~\ref{lem:tv:upper} this rectangle then grows to completely infect $S(m)$ within a further period of size $O\big( m\sqrt{n/k} \big)$. Since $m \ll \sqrt{n/k} \log m$, the lemma follows. 
\end{proof}


We now turn to the lower bound of Theorem~\ref{thm:NG2d} in regime $(c)$, which requires us to prove an upper bound on the terminal velocity of a droplet. We will use the following lemma in Section~\ref{sec:lowerTV}. 

\begin{lemma}\label{lem:tv:lower}
Let $R \subset S(n)$ be a rectangle, and let $R'$ be the rectangle obtained from $R$ by enlarging each of the sides\footnote{That is, the length of each side of $R'$ is $200n^{1/4}$ larger than that of the corresponding side of $R$, and $R'$ has the same centre as $R$.} by $200n^{1/4}$. If $t = n^{3/4} / k^{1/2}$, then
$$\Pr\big( [R]_t \not\subset R' \big) \, \le \, \frac{1}{n^3}$$
for all sufficiently large $n \in \N$.  
\end{lemma}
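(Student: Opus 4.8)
The plan is to control the growth of $[R]_t$ beyond each of the four sides of $R$ separately and then union bound; by the symmetry of $\Z^2$ it suffices to treat the right-hand side, that is, to show that the probability that $[R]_t$ reaches more than $100\,n^{1/4}$ columns to the right of $R$ is at most $\tfrac14 n^{-3}$. Write $x_0$ for the rightmost column of $R$, let $c_j$ denote the column $x_0+j$, let $X(s)$ be the largest $j$ for which $c_j$ contains an infected site at time $s$ (so $X(0)=0$), and let $t_j=\inf\{s:X(s)\ge j\}$. The observation that drives the argument — exactly as in Observation~\ref{obs:basiccoupling:lower} — is that for $j\ge2$ the site of $c_j$ infected at time $t_j$ has no infected neighbour in $c_j$ (empty just before $t_j$) or in $c_{j+1}$ (empty, since $t_{j+1}>t_j$), so this is necessarily a $1$-neighbour infection, supported by an infected site of $c_{j-1}$. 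Hence $X$ increases at time $s$ at rate at most $\tfrac{k}{n}\,w(s)$, where $w(s)$ is the number of infected sites in the current rightmost column $c_{X(s)}$, and therefore $X(t)$ is stochastically dominated by the value at time $t$ of a counting process with predictable intensity $\tfrac{k}{n}w(s)$.

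The heart of the proof is to show that, with probability at least $1-\tfrac18 n^{-3}$, the rightmost column stays narrow — of width $O\big(\sqrt{n/k}\,\big)$ except while it lies within the first $O(n^{1/4})$ columns to the right of $R$ — so that consecutive increases of $X$ beyond that initial band are separated by gaps of order at least $\sqrt{n/k}$. I would establish this after first cutting the process off at the first time $\sigma$ that the protrusion in any direction reaches $100\,n^{1/4}$, so that everything relevant lies in $S(2n)$, and would control a column's width using the following facts about how a column $c_j$ acquires infected sites: a $1$-neighbour infection into $c_j$ can occur only at a site supported by an infected site of $c_{j-1}$ (total rate $\le\tfrac{k}{n}w_{c_{j-1}}$) or at one of the $O(1)$ sites bordering an infected interval of $c_j$; a $2$-neighbour infection into $c_j$ merely extends an existing interval of $c_j$, each end extending at rate $\le1$; and the number of intervals of $c_j$ exceeds the number of $1$-neighbour infections it has received by at most one. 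Together these say that $c_j$ widens at rate $O\big(\tfrac{k}{n}w_{c_{j-1}}+(\text{number of intervals of }c_j)\big)$; feeding in that a column near the tip has width $O\big(\sqrt{n/k}\,\big)$, hence contributes only $\tfrac{k}{n}\cdot O\big(\sqrt{n/k}\,\big)=O\big(\sqrt{k/n}\,\big)$ to the rate of the next column, closes up to a self-consistent bound $w_{c_j}(s)=O\big(s-t_j\big)$ while $c_j$ is the rightmost column. The columns nearest $R$ can genuinely fill to width $\asymp n$ when $R$ has a long side, but they lie behind the "full front", which advances at rate $O(k/n)$ and so has reached only column $O\big(tk/n\big)=O\big(k^{1/2}/n^{1/4}\big)=O(n^{1/4})$ by time $t$; thus $X$ runs through this band using only $O(n^{1/4})$ of its budget of $100\,n^{1/4}$.

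Granting the width control, each gap $t_{j+1}-t_j$ for a column $c_j$ beyond the full front stochastically dominates (up to lower-order terms) a variable $c_0\sqrt{n/k}\,\sqrt{E_j}$ with the $E_j\sim\Exp(1)$ independent and $c_0$ an absolute constant, so $t\ge\sum_{j=1}^{X(t)}(t_j-t_{j-1})$ together with a Chernoff bound for $\sum_j\sqrt{E_j}$ gives $\Pr\big(X(t)\ge100\,n^{1/4}\big)\le e^{-\Omega(n^{1/4})}\le\tfrac18 n^{-3}$ for all large $n$; adding the probability of the bad width event gives $\tfrac14 n^{-3}$ for the right-hand side, and the union bound over the four directions proves the lemma. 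I expect the main obstacle to be exactly this width estimate: the widths of consecutive columns, and the numbers of $1$-neighbour infections they receive, are intertwined, so the induction must be run along a moving time-window on a carefully chosen good event, and the transition between the few wide columns near $R$ and the many narrow columns near the tip needs to be handled with care.
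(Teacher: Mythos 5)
Your approach is a genuinely different route from the paper's, and the gap lies where you suspect. The paper's proof dominates the Kesten--Schonmann process beyond each side of $R$ by a `generous' process started from the entire half-plane, which infects upward at rate $k/n$, sideways at rate $1$, and downward instantly, and then applies a first-passage-percolation-style union bound over infection \emph{paths} (Lemma~\ref{lem:Xstar:countingpaths}): each path to height $m$ has at least $m$ upward steps of rate $k/n$, the probability that a fixed path finishes in time $(m/100)\sqrt{n/k}$ is beaten by the combinatorial count of paths, and this gives $\Pr\big(X_m^* < (m/100)\sqrt{n/k}\big) \le n^3 2^{-m}$, which at $m = 100n^{1/4}$ is comfortably below $n^{-3}/4$. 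You instead track the rightmost column directly and bound its rate of advance by the width of the tip column. Your opening observation --- that the first site of a fresh column must be a one-neighbour infection supported by the previous column, so that $X$ advances with predictable intensity $\tfrac{k}{n}w(s)$ --- is correct, and is a nice spatial analogue of Observation~\ref{obs:basiccoupling:lower}.

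However, the width estimate is a genuine gap, not merely a deferred technicality, and there are two separate problems. First, you need $w_{c_j}(s)=O(s-t_j)$ to hold \emph{simultaneously} for all columns visited by the tip before the cutoff $\sigma$, with total failure probability $\lesssim n^{-3}$; but the per-column event that $c_j$ receives an atypical burst of one-neighbour infections while it is the tip has only $O(1)$-exponentially small probability (the count is Poisson of mean $\Theta(1)$ on the good event), and these events are positively reinforcing, since a wider column feeds a larger intensity to its successor, so errors can cascade. Extracting genuinely independent lower bounds $c_0\sqrt{n/k}\,\sqrt{E_j}$ on the gaps therefore requires a much more careful good-event construction than the sketch provides. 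Second, the `full front' does not in general advance at rate $O(k/n)$: for a base of height $a$ the rate at which a column fills completely is of order $1/\big(n/(ka)+a\big)$, which is $\asymp\sqrt{k/n}$ at $a\asymp\sqrt{n/k}$; plugging in $t$ then gives $\Theta(n^{1/4})$ wide columns rather than $o(n^{1/4})$, so the wide-base band consumes a constant fraction of your budget and the handoff to the narrow-tip regime is tight, not comfortable. The paper's path-counting argument avoids both difficulties because it never needs to control the shape of the infected set at all --- the $2^{-m}$ comes purely from balancing path entropy against the speed cost of $m$ upward steps. If you want to push your front-tracking picture through, I would isolate the width control as a free-standing lemma and run the conditional tail bounds explicitly, column by column, on a good event that tolerates occasional wide columns rather than forbidding them outright.
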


To prove Lemma~\ref{lem:tv:lower}, we will consider the following (more generous) process $X(t)$. Set $X(0) = \{ (x,y) \in \Z^2 : y \le 0 \}$, and thereafter let each infected site infect its neighbour directly above at rate $k/n$, its two horizontal neighbours at rate $1$, and the site below it instantly. Note that there is a subtle difference (in addition to the obvious differences) between the `generous' process $X(t)$ and the Kesten--Schonmann process. In the Kesten--Schonmann process, each uninfected site has an associated rate of infection, which depends on the number of its infected neighbours. In the generous process, each infected site has \emph{three associated Poisson processes} (one each for left, right and upwards infections), so the viewpoint is switched from sites becoming infected to sites causing infections. Nevertheless, it is straightforward to couple the generous process with the Kesten--Schonmann process with initially infected set $X(0)$. 
Next, define
$$X^*_m \, = \, \inf\big\{ t > 0 :  \exists \, (x,y) \in X(t) \text{ with } x \in \{-n,\dots,n\} \text { and } y = m \big\}$$
for each $m \in \N$. We will prove the following bound on $X^*_m$. 

\begin{lemma}\label{lem:Xstar:countingpaths}
If $k \ll n$, then
$$\Pr\bigg( X^*_m < \frac{m}{100} \sqrt{ \frac{n}{k} } \bigg) \, \le\, n^3 \cdot 2^{-m}$$
for every $m \in \N$.
\end{lemma}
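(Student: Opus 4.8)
The plan is to run a path‑counting argument directly on the generous process $X(t)$, built around one structural feature: since downward infections are instantaneous, the infected set of every column is always a downward‑closed interval, so the configuration at time $t$ is encoded by a height function $h_x(t)\in\Z_{\ge 0}$ with $h_x(0)=0$. This $h_x$ increases by exactly one at a time, namely when $(x,h_x+1)$ gets infected, and that can only happen via an upward infection from $(x,h_x)$ (rate $k/n$) or a horizontal infection from an already‑infected $(x\pm1,h_x+1)$ (rate $1$) — a downward infection from $(x,h_x+2)$ is impossible, since that site is not yet infected. In particular $X^*_m<T$, where $T:=\frac{m}{100}\sqrt{n/k}$, exactly when some site $(x,m)$ with $|x|\le n$ is infected by time $T$.

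Next I would assign to each infected site $v$ at height at least $1$ its \emph{canonical predecessor}: the infected neighbour whose clock actually caused $v$'s infection. By the observation above this predecessor is always the site directly below $v$ or a horizontal neighbour, and it was infected strictly before $v$. Following canonical predecessors from $(x,m)$ thus gives a self‑avoiding path down to row $0$ along which infection times strictly decrease; moreover a horizontal step can only go to the neighbour infected \emph{earlier} (never back the way it came), so the path is a ``staircase'': a monotone horizontal excursion at height $m$, a one‑step drop, a monotone excursion at height $m-1$, a drop, and so on down to row $0$. Hence a canonical path ending at a fixed $(x,m)$ with total horizontal length $c$ is determined by a left/right choice at each of the $m$ levels together with a composition of $c$ into $m$ nonnegative parts, so there are at most $2^m\binom{c+m-1}{m-1}$ of them (times $2n+1$ for the choice of $x$) — note it is crucial here that the direction freedom contributes only $2^m$, not $2^c$. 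For a fixed such path $\gamma$, telescoping the identity ``infection time of a site $=$ infection time of its canonical predecessor $+$ that edge's clock'' shows that on the event that $\gamma$ is the canonical path of $(x,m)$ and $(x,m)$ is infected by time $T$, the sum of $\gamma$'s $m$ upward‑edge clocks (i.i.d.\ $\Exp(n/k)$) and $c$ horizontal‑edge clocks (i.i.d.\ $\Exp(1)$), all independent, is at most $T$.

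Combining these by a union bound,
\[
\Pr\big(X^*_m<T\big)\;\le\;(2n+1)\sum_{c\ge 0}2^m\binom{c+m-1}{m-1}\,\Pr\bigg(\sum_{i=1}^m U_i+\sum_{j=1}^c W_j\le T\bigg),
\]
with $U_i\sim\Exp(n/k)$, $W_j\sim\Exp(1)$ independent, I would finish with a Chernoff estimate: for any $\theta>0$ the inner probability is at most $e^{\theta T}(1+\theta n/k)^{-m}(1+\theta)^{-c}$, and $\sum_{c\ge 0}\binom{c+m-1}{m-1}(1+\theta)^{-c}=(1+1/\theta)^m$, whence
\[
\Pr\big(X^*_m<T\big)\;\le\;(2n+1)\,e^{\theta T}\bigg(\frac{2(1+\theta)}{\theta\,(1+\theta n/k)}\bigg)^{\!m}.
\]
Taking $\theta=200\sqrt{k/n}$ makes $\theta T=2m$ exactly, while $\theta n/k=200\sqrt{n/k}\to\infty$ (as $k\ll n$) pushes the bracketed quantity below $1/10^4$; so for $n$ large the bound is at most $(2n+1)(e^2/10^4)^m\le n^3\,2^{-m}$, as required. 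The main obstacle is the second paragraph — making the staircase description of canonical paths rigorous, i.e.\ verifying that downward‑closedness really does forbid down‑steps in the genealogy and that horizontal excursions cannot reverse, and setting up the telescoping and independence of edge clocks cleanly; the counting, union bound and final estimate are routine. (One could instead split the sum over $c$ at $c\approx T$ and $c\approx e^2T$ and apply Lemma~\ref{lem:poisson}, but the single Chernoff step is shorter and the constant $1/100$ in $T$ makes it come out cleanly.)
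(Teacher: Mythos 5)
Your staircase claim --- that the canonical genealogy from an infected site in row $m$ never takes a downward step --- is not merely the main technical obstacle: it is false, and the failure is precisely what the paper's parameter $m'(\gamma)$ is there to absorb. The flaw is in the sentence ``a downward infection from $(x,h_x+2)$ is impossible, since that site is not yet infected.'' Because downward infections are \emph{instantaneous}, a single horizontal infection landing at $(x,y)$ with $y>h_x+1$ triggers a cascade that infects $(x,y),(x,y-1),\dots,(x,h_x+1)$ all at the same instant. The site $(x,h_x+1)$ is then infected by the site \emph{above} it, and its canonical predecessor is $(x,h_x+2)$, so the genealogy does take an up-step. Concretely: grow column~$1$ to height~$2$ by two upward infections, then let $(1,2)$ infect $(0,2)$ horizontally so that $(0,1)$ is filled in by cascade, then let $(0,1)$ infect $(-1,1)$ horizontally, then let $(-1,1)$ infect $(-1,2)$ upward. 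The canonical path from $(-1,2)$ is $(-1,2)\to(-1,1)\to(0,1)\to(0,2)\to(1,2)\to(1,1)\to(1,0)$, whose heights $2,1,1,2,2,1,0$ rise in the middle. Since such a path is omitted from your union bound over staircases, the bound $\Pr(X^*_m<T)\le\sum_{\text{staircase}}\Pr(t(\gamma)<T)$ does not hold, and one cannot fix it simply by choosing a better endpoint: in such configurations \emph{no} staircase ending in row $m$ need satisfy $t(\gamma)\le T$.

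The paper's proof therefore takes a union bound over \emph{all} nearest-neighbour paths from $\Z\times\{0\}$ to $\{-n,\dots,n\}\times\{m\}$, parametrised by the number $m'$ of downward steps (equivalently, of cascade up-steps in the genealogy); such a path requires $m+m'$ independent $\Exp(n/k)$ clocks rather than $m$, and the extra decay $(\sqrt{k/n})^{m'}$ is what beats the extra path-counting factor. Your Chernoff computation and the observation that the horizontal-direction freedom contributes only $2^{\text{(number of levels)}}$ rather than $2^{\text{(number of horizontal steps)}}$ are correct in spirit and in fact tidier than what the paper does, but they are applied to the wrong class of paths. To repair the argument you would need to extend the counting to paths with $m'\ge 0$ extra up/down pairs (so $m+m'$ rate-$k/n$ edges, $\ell$ rate-$1$ edges, and $m'$ free instantaneous edges), keeping the endpoint in row $m$, and then redo the Chernoff/Lemma~\ref{lem:poisson} estimate with the sum running to $m+m'$; at that point you have essentially reconstructed the paper's proof.
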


\begin{proof}
Let $\Gamma_m$ denote the set of all paths $\gamma$ between the sets $\Z \times \{0\}$ and $\{-n,\ldots,n\}\times\{m\}$, and let us write $m'(\gamma)$ and $\ell(\gamma)$ for the number of downward vertical steps and the number of horizontal steps in $\gamma$, respectively. Given $\gamma \in \Gamma_m$, define $t(\gamma)$ to be the (random) total time associated with the sequence of infections encoded by $\gamma$.\footnote{Note that $t(\gamma)$ is well-defined, by the coupling with the Poisson processes described in the paragraph before Lemma~\ref{lem:Xstar:countingpaths}: for any $\gamma\in\Gamma_m$, there is a sequence of successive clock rings associated with (the directed edges along) that path, and $t(\gamma)$ is the time of the final clock ring. Thus it does not matter whether $\gamma$ encodes the \emph{actual} sequence of infections.} Since horizontal infections have rate $1$ and upward vertical infections have rate $k/n$, the distribution of $t(\gamma)$ is precisely
\begin{equation}\label{eq:tgamma:dist}
\sum_{i=1}^{m+m'(\gamma)} Y_i \, +\, \sum_{j=1}^{\ell(\gamma)} Z_j,
\end{equation}
where the random variables $Y_i \sim \Exp(n/k)$ and $Z_j \sim \Exp(1)$ are all independent. 

We claim that (for any $a > 0$) the event $\{ X^*_m < a \}$ is precisely the union over paths $\gamma \in \Gamma_m$ of the event that $t(\gamma) < a$. Indeed, suppose that $(x,m) \in \{-n,\dots,n\}\times \{m\}$ is infected by time $a$, and define an associated path $\gamma = (\gamma_0,\dots ,\gamma_{L})$ by setting $\gamma_0 = (x,m)$, and then defining $\gamma_{i+1}$ to be the site that infected $\gamma_i$ for each $0 \le i < L$, where $\gamma_L$ is the first site in the set $\Z\times \{0\}$. Note that $L=m+2m'+\ell$.
 
It remains to bound, for each triple $(m,m',\ell)$, the number of paths $\gamma\in\Gamma_m$ with $m'(\gamma) = m'$ and $\ell(\gamma) = \ell$, and the probability for each such $\gamma$ that $t(\gamma) < (m/100) \sqrt{ n/k }$. Recalling that the distribution of $t(\gamma)$ is given by~\eqref{eq:tgamma:dist}, observe first that 
$$\Pr\bigg( \sum_{i=1}^{m+m'} Y_i \le \frac{m}{100} \sqrt{ \frac{n}{k} } \bigg) \, \le \, \bigg( \frac{e}{100}\sqrt{ \frac{k}{n} } \bigg)^{m+m'},$$
by Lemma~\ref{lem:poisson}. Now, if $L \le (m+m')\sqrt{n/k}$, then the number of corresponding choices of $\gamma$ is at most
\[
(2n+1) \binom{L}{m+m'} 4^{m+m'}\, \le \, (2n+1) \big( 4e\sqrt{n/k} \big)^{m+m'},
\]
and hence the expected number of such paths with $t(\gamma) < (m/100) \sqrt{ n/k }$ is at most
$$(2n+1) \bigg( 4e\sqrt{ \frac{n}{k} } \bigg)^{m+m'} \bigg( \frac{e}{100}\sqrt{ \frac{k}{n} } \bigg)^{m+m'} \, \le \, n \cdot 3^{-(m+m')}.$$
Summing over $L \le (m+m')\sqrt{n/k}$ and then over $m' \ge 0$, it follows that the expected number of such `short' paths is at most $n^2 \cdot m \cdot 3^{-m}$.
 
To bound the expected number of paths with $L > (m+m')\sqrt{n/k}$, observe first that
$$\Pr\bigg( \sum_{j=1}^{L - (m+m')} Z_j \le \frac{m}{100} \sqrt{ \frac{n}{k} } \bigg) \, \le \, \bigg( \frac{e}{50} \bigg)^{-L/2} \, \le \, 4^{-L},$$
by Lemma~\ref{lem:poisson}, since $(m/100) \sqrt{ n/k } \le L/100$ and $L - (m+m') \ge L/2$. Thus, noting that there are at most $(2n+1) \cdot 3^L$ paths $\gamma \in \Gamma_m$ of length $L$, the expected number of such paths with $t(\gamma) < (m/100) \sqrt{ n/k }$, and with $m'$ and $L$ given, is at most $(2n+1)\cdot 3^L\cdot 4^{-L}$. Summing over $L>(m+m')\sqrt{n/k}$ and then over $m' \ge 0$, we have that the expected number of such `long' paths is at most $n \cdot 3^{-m}$. This completes the proof of the lemma.
\end{proof}

We can now easily deduce Lemma~\ref{lem:tv:lower}. 

\begin{proof}[Proof of Lemma~\ref{lem:tv:lower}]
Recall that $t = n^{3/4} / k^{1/2}$, and suppose that $[R]_t \not\subset R'$. Then in at least one of the four directions, an event corresponding to $X^*_m < t$ occurred, where $m = 100n^{1/4}$. Now, noting that $(m/100) \sqrt{ n/k } = n^{3/4} / k^{1/2}$, it follows by Lemma~\ref{lem:Xstar:countingpaths} that this has probability at most
$$\frac{4n^3}{2^m} \, \le \, \frac{4n^3}{2^{n^{1/4}}} \, \le \, \frac{1}{n^3},$$
as claimed.
\end{proof}

\section{The upper bounds}\label{upper:sec}

In this section we will deduce the various upper bounds in Theorem~\ref{thm:NG2d} from the results in Sections~\ref{sec:AR} and~\ref{sec:TV}. We begin with regime $(a)$, where the claimed bound is an easy consequence of Holroyd's theorem. 

\begin{proposition}\label{prop:a:upper}
For every $k \ge 1$, and every $\delta > 0$, we have
\begin{equation}\label{eq:prop:a:upper}
\tau \leq \left( \frac{\pi^2}{18} + \delta \right) \frac{n}{\log n}
\end{equation}
with high probability.
\end{proposition}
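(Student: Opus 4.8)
The plan is to reduce the statement directly to Holroyd's sharp threshold for $2$-neighbour bootstrap percolation, as quoted in \eqref{eq:Hol:thm}, by simply ignoring the $1$- and $2$-neighbour infection rates entirely and running pure bootstrap percolation on the set of nucleations. First I would fix $\delta>0$ and set $t = \big( \frac{\pi^2}{18} + \delta \big) \frac{n}{\log n}$. Let $A_t$ denote the (random) set of sites that have nucleated by time $t$, i.e. each site of $\Z^2$ is placed in $A_t$ independently with probability $p = 1 - e^{-t/n} = (1+o(1)) t/n = \big( \frac{\pi^2}{18} + \delta + o(1) \big) \frac{1}{\log n}$. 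The key monotonicity observation is that in the nucleation and growth process the infected set at time $t$ contains $\langle A_t \rangle$, the $2$-neighbour bootstrap closure of $A_t$, since every $2$-neighbour infection that the bootstrap process would perform is available (at rate $1 \ge 0$) in the full process and nucleations only help. Hence it suffices to show that $A_t$ percolates (in $S(n)$, say) with high probability, so that in particular the origin is infected by time $t$ — except we must be slightly careful, because bootstrap closure alone infects the origin at some unspecified later time, not necessarily by time $t$.

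To handle the timing I would restrict to a finite box. By Lemma \ref{lem:Sn} we may work inside $S(n)$, and in fact I would work inside a box $[N]^2$ with $N = n^{1/3}$, say, centred appropriately: since $p \ge \big( \frac{\pi^2}{18} + \delta/2 \big) \frac{1}{\log N}$ once $n$ is large (as $\log N = \frac{1}{3}\log n$, so we need the constant to absorb the factor $3$ — actually $\big( \frac{\pi^2}{18}+\delta\big)\frac{1}{\log n} = \big( \frac{\pi^2}{18}+\delta\big)\frac{1}{3\log N}$, which is \emph{below} threshold, so this choice of $N$ is wrong). Instead I would take $N = n^{1-\eps}$ for small $\eps$, so that $\log N = (1-\eps)\log n$ and $p \ge \big( \frac{\pi^2}{18} + \delta/2 \big)\frac{1}{\log N}$ provided $\eps$ is small enough relative to $\delta$; then \eqref{eq:Hol:thm} gives that $A_t \cap [N]^2$ percolates within $[N]^2$ with high probability. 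Finally, to get the \emph{time} bound, apply Lemma \ref{lem:boot:upper:weak} (with $m=1$, $M=N$), which says that once $p \ge C/\log N$ the bootstrap closure of $A_t$ fills $[N]^2$ within time $N(\log N)^4 = n^{1-\eps}(\log n)^4 = o(n/\log n)$; adding this to $t$ changes the constant by $o(1)$, and the origin (which we place at the centre of $[N]^2$) is infected by time $t + o(n/\log n) \le \big( \frac{\pi^2}{18} + 2\delta \big)\frac{n}{\log n}$. Since $\delta$ was arbitrary this gives \eqref{eq:prop:a:upper}.

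The main obstacle — really the only non-routine point — is making sure the \emph{time} at which bootstrap percolation finishes is $o(n/\log n)$ rather than just finite; this is exactly what Lemma \ref{lem:boot:upper:weak} is designed to supply, so the argument is short. A secondary technical point is choosing the auxiliary box size $N$ so that Holroyd's threshold constant $\pi^2/18$ is not eroded: one needs $\log N = (1-o(1))\log n$, which forces $N = n^{1-o(1)}$, and then one must check $n^{1-o(1)}(\log n)^4 = o(n/\log n)$, which holds comfortably. I would also remark that this upper bound uses nothing about $k$ and hence holds for all $1 \le k \le n$, as stated in the sketch.
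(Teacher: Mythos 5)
Your proposal is essentially identical to the paper's proof: fix a box $S(M)$ with $M=n^{1-\eps}$, note that by time $t$ the nucleation density lies above Holroyd's sharp threshold \eqref{eq:Hol:thm} for that box, and then invoke Lemma~\ref{lem:boot:upper:weak} to see that the ensuing bootstrap phase completes in time $M^{1+o(1)}=o(n/\log n)$. The only cosmetic difference is that the paper defines $t$ directly as $(\pi^2/18+\eps)\,n/\log M$ rather than in terms of $\log n$, which makes the threshold comparison immediate.
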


\begin{proof}
Let $\eps = \eps(\delta) > 0$ be sufficiently small, and set $M = n^{1-\eps}$. By Holroyd's theorem, the collection of nucleations which occur in the box $S(M)$ by time 
$$t \, = \, \left( \frac{\pi^2}{18} + \eps \right) \frac{n}{\log M}$$ 
percolate with high probability under the $2$-neighbour bootstrap rule. Moreover, by Lemma~\ref{lem:boot:upper:weak}, this `bootstrap phase' takes time at most $M^{1 + o(1)} \ll t$. Hence, with high probability the origin is infected by time $\big( 1 + o(1) \big) t$, as required. 
\end{proof}

We next move on to regime $(b)$, where we need to do a little more work. 

\begin{proposition}\label{prop:b:upper}
Let $\log n \ll k \ll \sqrt{n}(\log n)^2$, and let $\delta>0$. Then
\[
\tau \leq \left(\frac{1}{4} + \delta\right) \frac{n}{k} \log \bigg( \frac{k}{\log n} \bigg)
\]
with high probability.
\end{proposition}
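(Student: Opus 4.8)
The plan is to follow the strategy sketched in Section~\ref{sec:outline} for regime~$(b)$: produce a scale $m$ of droplets which individually grow to fill $S(m)$ by time roughly $\big(\tfrac14+\tfrac{\delta}{2}\big)\tfrac{n}{k}\log(k/\log n)$, and then check that these filled copies of $S(m)$ bootstrap percolate so fast that the extra time is negligible. Concretely, let $\eps=\eps(\delta)>0$ be small, and choose $m=m(n)$ so that
\[
\eps\, m^2 \log m \;=\; \frac{k}{\log n}.
\]
One should check that this is a legitimate choice in the whole range $\log n\ll k\ll\sqrt n(\log n)^2$: it gives $m\to\infty$, and $m\ll\sqrt{(n/k)\log(n/k)}$ (using $k\ll\sqrt n(\log n)^2$ for the upper end), so that Lemma~\ref{lem:acc:upper:fill} applies, and also $m$ stays polynomially small in $n$. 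Set $t_0=\big(\tfrac14+\tfrac{\delta}{2}\big)\tfrac{n}{k}\log(k/\log n)$; note $\log m=(1+o(1))\tfrac12\log(k/\log n)$, so $(1+o(1))\tfrac{n}{2k}\log m\le t_0$.

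The main steps, in order. (1) Restrict attention to $S(n)$, which is harmless by Lemma~\ref{lem:Sn} since $t_0=o(n)$; in fact we only need a box of polynomial size. (2) Partition $S(n^{1/3})$ (say) into translates of $S(m)$, and call a translate \emph{good} if it contains at least one nucleation by time $t_1:=\delta' t_0$ for a suitably small $\delta'=\delta'(\delta)>0$. The probability a given translate is good is $1-(1-t_1/n)^{\Theta(m^2)} = \Theta(m^2 t_1/n)=\Theta(1/\log n)$ by the choice of $m$ (after adjusting constants), and the translates are independent; so the set of good translates stochastically dominates a $p$-random subset with $p\ge C/\log n$, where $C$ can be taken as large as we like by shrinking $\delta'$ — wait, rather by noting $p=\Theta(m^2 t_1/n)$ and we get to tune the implicit constant via $\eps$ and $\delta'$ so that $p\ge C/\log\big(n^{1/3}/m\big)$, which holds since $m$ is polynomially small so $\log(n^{1/3}/m)=\Theta(\log n)$. (3) By Lemma~\ref{lem:acc:upper:fill}, conditional on a translate being good, with high probability it is entirely infected by time $(1+o(1))\tfrac{n}{2k}\log m\le t_0$, and a union bound over the (polynomially many) translates keeps this whp — here one uses the quantitative tail $\exp(-\delta^2\sqrt{\log m})$ from Lemma~\ref{lem:acc:upper}, but since $m$ is only polynomially small in $n$, $\sqrt{\log m}=\Theta(\sqrt{\log n})$ is not by itself enough for a union bound over $n^{\Theta(1)}$ boxes; instead one should take the box to be $S(M)$ with $M$ a slowly growing power or poly-log multiple of $m$, exactly as in the regime~$(c)$ sketch, so that the number of translates is subpolynomial and the union bound goes through, and then invoke Lemma~\ref{lem:boot:upper:weak} inside $S(M)$ and iterate / or simply note that a single good translate near the origin already suffices once combined with the bootstrap step. (4) Apply Lemma~\ref{lem:boot:upper:weak} with this $p\ge C/\log(M/m)$: the entirely-infected good translates bootstrap percolate in $S(M)$ within time $M(\log(M/m))^4 = o(n/k) = o(t_0)$, using that $M$ is polynomially small in $n$ and $k\ll\sqrt n(\log n)^2$. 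Hence the origin is infected by time $t_0+o(t_0)\le\big(\tfrac14+\delta\big)\tfrac{n}{k}\log(k/\log n)$ whp.

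The step I expect to be the main obstacle is the bookkeeping around the box size and the union bound in step~(3)--(4): one needs $m$ small enough that Lemma~\ref{lem:acc:upper} applies and that the bootstrap phase in Lemma~\ref{lem:boot:upper:weak} costs only $o(t_0)$, but the number of $S(m)$-translates in whatever box one works in must be controllable — either subpolynomial, so the weak tail $\exp(-\delta^2\sqrt{\log m})$ suffices directly, or one must feed the good translates into Lemma~\ref{lem:boot:upper:weak} over a genuinely macroscopic box and rely on the whp statement there rather than a naive union bound. A secondary subtlety, flagged in the outline, is that when $\sqrt n(\log n)^{3/2}\ll k\ll\sqrt n(\log n)^2$ the droplets may reach terminal velocity before filling $S(m)$; but Lemma~\ref{lem:tv:upper} handles exactly this, and since $m\sqrt{n/k}=o(t_0)$ in that range the terminal-velocity correction is again negligible, so the argument is unaffected up to constants. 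Everything else — the Poisson/exponential tail estimates, the independence of nucleations across disjoint translates, the estimate $\log(M/m)=\Theta(\log n)$ — is routine given the lemmas already proved.
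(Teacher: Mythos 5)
Your high-level plan is the same as the paper's: choose $m$ so that $m^2\log m=\Theta(k/\log n)$, tile a box $S(M)$ by translates of $S(m)$, observe that each translate receives a nucleation by time $\delta' t_0$ with probability $\Theta(1/\log n)$, argue that such a nucleation grows to fill its translate by time $\approx\frac{n}{2k}\log m$, and feed the filled translates into Lemma~\ref{lem:boot:upper:weak}. The paper also uses $M=n^{1/3}$ and couples the process in three stages (nucleation only, Kesten--Schonmann growth with droplet interactions ignored, then $2$-neighbour infections only) to make the translates evolve independently.

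Where you go astray is in step~(3), and the ``main obstacle'' you flag is a red herring that you then resolve incorrectly. There is no need for a union bound over all translates of $S(m)$, and the tail bound $\exp(-\delta^2\sqrt{\log m})$ from Lemma~\ref{lem:acc:upper} is never asked to beat a polynomial number of boxes. The paper instead defines, for each translate $S$, the single event $E_S$ that $S$ contains a nucleation in step~1 \emph{and} that nucleation fills $S$ by the end of step~2, shows $\Pr(E_S)\gg 1/\log n$, and notes that the events $E_S$ are independent across translates (by the coupling). The collection of translates for which $E_S$ holds therefore stochastically dominates a $p$-random subset with $p\ge C/\log(M/m)$ (using $M=n^{1/3}$, $m=n^{o(1)}$, so $\log(M/m)=\Theta(\log n)$), and Lemma~\ref{lem:boot:upper:weak} applies directly; monotonicity of bootstrap percolation in the initial set takes care of the fact that the good set dominates, rather than equals, a $p$-random set. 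One does not need every nucleation-containing translate to fill up, only a sufficiently dense independent subcollection.

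Your proposed fix --- shrinking $M$ to a slowly growing power or poly-log multiple of $m$ so that the number of translates becomes subpolynomial --- would actually \emph{break} the argument at the bottom of the range. When $k$ is only slightly larger than $\log n$ (say $k=(\log n)^2$), one has $\log m=\Theta(\log\log n)$, so if $M$ is a fixed power of $m$ then $\log(M/m)=\Theta(\log\log n)$ and Lemma~\ref{lem:boot:upper:weak} would require $p\ge C/\log\log n$, far larger than the $\Theta(1/\log n)$ nucleation probability you have available. Likewise, your parenthetical ``a single good translate near the origin already suffices'' is false: a single filled $S(m)$ is nowhere near a critical droplet for $2$-neighbour bootstrap percolation, so the bootstrap phase genuinely needs the $p$-random density over the whole of $S(n^{1/3})$. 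Your handling of the secondary subtlety for $\sqrt{n}(\log n)^{3/2}\ll k\ll\sqrt{n}(\log n)^2$ via Lemma~\ref{lem:tv:upper} is correct and matches the paper.
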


\begin{proof}
Set
\[
M = n^{1/3}, \qquad m = \frac{1}{\delta} \cdot \sqrt{\frac{k}{\log n \log m}} \qquad \text{and} \qquad t = \frac{n}{2k}\log m.
\]
(Note that the implicit definition of $m$ has a unique solution.) We will couple the nucleation and growth process up to time $(1 + 4\delta)t$ as follows. First, for the entire period, set the rates for all sites outside $S(M)$ equal to zero. Inside $S(M)$ we do the following:
\begin{enumerate}
\item[1.] Run for time $\delta t$ allowing only nucleations.\smallskip
\item[2.] Run for time $(1+2\delta)t$ allowing each nucleation to grow via 1- and 2-neighbour infections (i.e., as in the Kesten-Schonmann process), but ignoring the interaction between different droplets. \smallskip
\item[3.] Run for time $\delta t$ allowing only 2-neighbour infections.
\end{enumerate}
It is easy to see that the process just described is indeed a coupling, in the sense that if $\tau'$ is the time the origin is infected in the coupled process, then $\tau' \ge \tau$. 

Now, partition $S(M)$ into translates of $S(m)$, and for each such square $S$ consider the event $E_S$ that there was a nucleation in $S$ which grew to infect the entire square by the end of step 2. We will show that $E_S$ has probability $\gg 1 / \log n$. Since these events are independent and $m \cdot n^{\Omega(1)} < M < t \cdot n^{-\Omega(1)}$, it follows by Lemma~\ref{lem:boot:upper:weak} that, with high probability, the whole of $S(M)$ is infected by time $(1 + 4\delta) t$. 

To spell out the details, note first that the probability that a given translate of $S(m)$ has received at least one nucleation by time $\delta t$ is at least
\begin{equation}\label{eq:b:upper:nuc}
1 - \exp\left( -\frac{4m^2 \cdot \delta t}{n} \right) = 1 - \exp\left( -\frac{2}{\delta\log n} \right) \geq \frac{C}{\log n},
\end{equation}
where $C$ is the constant in Lemma~\ref{lem:boot:upper:weak}, since $\delta$ is arbitrarily small. We claim that if there is a nucleation in a given translate $S$ of $S(m)$ in step 1, then with high probability $S$ is entirely infected by the end of step 2. This follows immediately from Lemma~\ref{lem:acc:upper:fill} if $k \ll \sqrt{n} (\log n)^{3/2}$, since in this case $1 \ll m^2 \ll (n/k) \log m$. 

On the other hand, note that $m \ge 2 \sqrt{n/k}$ for all $k \ge \sqrt{n} \log n$, and thus
$$(1 + \delta ) \frac{n}{2k} \log \Big( 2\sqrt{n/k} \Big) \, \le \, (1+\delta)t.$$ 
By Lemma~\ref{lem:acc:upper}, it follows that, with high probability, there exists a rectangle $R$ of semi-perimeter at least $2\sqrt{n/k}$ that intersects $S$, and is entirely infected by time $(1+\delta)t$ after the start of step 2. Now, by Lemma~\ref{lem:tv:upper}, with high probability this rectangle grows so that $S$ is entirely infected after a further time 
$$O\bigg( m\sqrt{\frac{n}{k}} \bigg) \, = \, O\bigg( \sqrt{\frac{n}{\log n\log m}} \bigg) \, \ll \, \frac{\delta n}{2k} \log m \, = \, \delta t,$$
where the last inequality follows since $k \ll \sqrt{n}(\log n)^2$.

Finally, since (in our coupling) each translate of $S(m)$ evolves independently until time $(1+3\delta)t$, and since $m \cdot n^{\Omega(1)} < M < t \cdot n^{-\Omega(1)}$, it follows from Lemma~\ref{lem:boot:upper:weak} that, with high probability, $S(M)$ is completely infected by time $(1 + 4\delta) t$, as required.
\end{proof}

Finally, let us prove the upper bound in regime $(c)$. The proof is almost identical to that of Proposition~\ref{prop:b:upper} after some adjustment of the parameters.

\begin{proposition}\label{prop:c:upper}
There exists a constant $C > 0$ such that the following holds. If $\sqrt{n}(\log n)^2 \le k \ll n$, then
\[
\tau \le C \cdot \left( \frac{n^2}{k\log (n/k)} \right)^{1/3}.
\]
with high probability.
\end{proposition}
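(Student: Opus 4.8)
The plan is to mimic the proof of Proposition~\ref{prop:b:upper} almost verbatim, adjusting the parameters so that droplets are grown up to their terminal-velocity scale $m = \Theta(\sqrt{n/k})$ and then combined via a bootstrap phase. Concretely, set $t = \big( \frac{n^2}{k \log (n/k)} \big)^{1/3}$, let $m = t\sqrt{k/n} = \big( \frac{k}{n \log(n/k)} \big)^{1/3} n^{1/3} \cdot (\text{const})$ (so that $m \approx (kn/\log(n/k))^{1/6}$ up to constants, in particular $m \gg 1$ and $m^2 \log m = \Theta(k/\log(n/k)) \cdot \log m$, giving $m \asymp \sqrt{n/k}$ precisely when $k \asymp \sqrt{n}$, and $m \gg \sqrt{n/k}$ throughout regime~$(c)$), and let $M = m \cdot (n/k)^{1/4}$. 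As before, first set all rates outside $S(M)$ to zero (justified by Lemma~\ref{lem:Sn} since $M \ll n$), and couple the true process with a three-stage process on $S(M)$: run for time $\delta' Ct$ adding only nucleations; then for time $C't$ let each nucleation grow independently via the Kesten--Schonmann rules; then for time $\delta' Ct$ add only $2$-neighbour infections.

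First I would bound below the probability that a given translate $S$ of $S(m)$ receives a nucleation in stage~1: this probability is $1 - \exp(-\Theta(m^2 t / n)) = 1 - \exp(-\Theta(1/\log(n/k)))$, which for an appropriate choice of the constant $C$ is at least $C_0 / \log(n/k)$, where $C_0$ is the constant from Lemma~\ref{lem:boot:upper:weak} applied with $M/m = (n/k)^{1/4}$ so that $\log(M/m) = \Theta(\log(n/k))$. Next I would show that, conditioned on such a nucleation existing, $S$ is entirely infected by the end of stage~2 with high probability: here I use Theorem~\ref{thm:acc} (or rather Lemma~\ref{lem:acc:upper}) to grow the nucleation to a rectangle of semi-perimeter $2\min\{m,\sqrt{n/k}\} = 2\sqrt{n/k}$ in time $O\big(\frac{n}{k}\log\sqrt{n/k}\big) = O\big(\frac nk \log(n/k)\big) \ll t$ — note $\frac nk \log(n/k) \ll t$ exactly because $t^3 = n^2/(k\log(n/k))$ and $k \gg \sqrt n (\log n)^2$ — and then Lemma~\ref{lem:tv:upper} (applied with this value of $m$, noting $1 \ll \log m \ll \sqrt{n/k}$, which holds since $m$ is a small power of $n$ and $\sqrt{n/k}$ is polynomially large) to fill all of $S(m)$ from that rectangle in a further time $O(m\sqrt{n/k}) = O(t)$. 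So stage~2 needs only time $C't$ for a suitable $C'$.

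Finally I would invoke Lemma~\ref{lem:boot:upper:weak}: the events $E_S$ that $S$ is filled by the end of stage~2 are independent across the translates $S$ of $S(m)$ (since in the coupled process different translates evolve independently through stage~2), each occurs with probability $\ge C_0/\log(M/m)$, and $M/m = (n/k)^{1/4} \to \infty$; hence with high probability the filled squares percolate under the $2$-neighbour rule within $S(M)$, and by Lemma~\ref{lem:boot:upper:weak} this bootstrap phase completes in time $M \cdot (\log(M/m))^4 = O\big( m (n/k)^{1/4} (\log(n/k))^4 \big)$, which I must check is $\ll t$, and indeed $o(t)$, so it fits inside stage~3. (One needs $m (n/k)^{1/4} (\log(n/k))^4 \ll t = m\sqrt{n/k}$, i.e. $(\log(n/k))^4 \ll (n/k)^{1/4}$, which holds since $n/k \to \infty$.) Putting the three stages together gives $\tau \le (1 + O(\delta')) C t = O\big( \frac{n^2}{k\log(n/k)} \big)^{1/3}$ with high probability, as required.

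The main obstacle is purely bookkeeping: verifying the chain of inequalities $\frac nk \log(n/k) \ll t$, $m\sqrt{n/k} = \Theta(t)$, and $M(\log(M/m))^4 \ll t$ simultaneously, and in particular confirming that the choice $m \asymp \sqrt{n/k}$ forced by $m = t\sqrt{k/n}$ is compatible with the hypotheses $1 \ll \log m \ll \sqrt{n/k}$ of Lemma~\ref{lem:tv:upper} and with $\log(M/m) = \Theta(\log(n/k))$. There is no genuinely new idea beyond Proposition~\ref{prop:b:upper}; the only structural difference is that the single-nucleation growth now passes through the terminal-velocity regime, so Lemma~\ref{lem:tv:upper} replaces the direct appeal to Lemma~\ref{lem:acc:upper:fill} used in regime~$(b)$.
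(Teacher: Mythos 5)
Your proposal is essentially identical to the paper's own proof of Proposition~\ref{prop:c:upper}: same parameters $t$, $m = t\sqrt{k/n}$, $M = m(n/k)^{1/4}$, same three-stage coupling, and the same appeals to Lemmas~\ref{lem:acc:upper}, \ref{lem:tv:upper} and~\ref{lem:boot:upper:weak}. (One small bookkeeping note: your claims that $\frac nk\log(n/k)\ll t$ and that $\sqrt{n/k}$ is ``polynomially large'' are not quite right near the endpoints $k=\sqrt n(\log n)^2$ and $k$ close to $n/\mathrm{polylog}(n)$ respectively, but neither affects the conclusion --- the former only needs $O(t)$, and the paper itself remarks that the hypothesis $\log m\ll\sqrt{n/k}$ in Lemma~\ref{lem:tv:upper} is a convenience rather than a necessity.)
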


\begin{proof}
Let $C > 0$ be a sufficiently large constant, and set 
\[
t = \left(\frac{n^2}{ k \log (n/k)}\right)^{1/3}, \qquad m = t \cdot \sqrt{k/n} \qquad \text{and} \qquad M = m \cdot (n/k)^{1/4}.
\]
Define a coupling (and the events $E_S$) as in the proof of Proposition~\ref{prop:b:upper}, except with each period of the coupling lasting time $C t$, and observe that a translate $S$ of $S(m)$ contains at least one nucleation by time $C t$ with probability at least
$$1 - \exp\left( - \frac{4m^2 \cdot C t}{n}\right) = 1 - \exp\left( - \frac{4C t^3k}{n^2} \right) \ge \frac{C}{\log (n/k)}.$$
The proof now follows exactly as before: by Lemmas~\ref{lem:acc:upper} and~\ref{lem:tv:upper}, a nucleation in $S$ grows, with high probability, to fill the whole of $S$ within time
$$\big(1 + o(1) \big) \frac{n}{k} \log \Big( 2\sqrt{n/k} \Big) + O\Big( m\sqrt{n/k} \Big) \, \le \, C t$$
since $k \ge \sqrt{n}(\log n)^2$ and $C$ was chosen sufficiently large. It follows that
$$\Pr(E_S) \, \ge \, \frac{C}{2 \log (n/k)} \, = \, \frac{C}{8 \log (M/m)}$$ 
for each translate $S$ of $S(m)$. Noting that 
$$M \bigg( \log \frac{M}{m} \bigg)^4 \, \le \, t \cdot \frac{\big( \log (n/k) \big)^4}{(n/k)^{1/4}} \, \ll \, t,$$ 
since $k \ll n$, it follows from Lemma~\ref{lem:boot:upper:weak} that, with high probability, $S(M)$ is completely infected by time $3Ct$, as required.
\end{proof}

\section{The lower bound in the pure bootstrap regime}\label{sec:lower:boot}

Having proved the upper bounds in Theorem~\ref{thm:NG2d}, the (significantly harder) task of proving the lower bounds remains. We begin with regime~$(a)$, where we will show that the process is well-approximated by bootstrap percolation. 

\begin{proposition}\label{prop:a:lower}
Let $\delta > 0$, and suppose that $k \ll \log n$. Then
\begin{equation}\label{eq:a:lower}
\tau \geq \left( \frac{\pi^2}{18} - \delta \right) \frac{n}{\log n}
\end{equation}
with high probability as $n \to \infty$. 
\end{proposition}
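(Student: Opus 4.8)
The plan is to adapt Holroyd's proof of the lower bound for $p_c([n]^2,2)$, accounting for the fact that here the ``infection'' does not happen instantly but is driven by a nucleation-and-growth dynamics. First I would apply Lemma~\ref{lem:Sn} to restrict attention to the square $S(n)$, so that all nucleations relevant to the infection of the origin occur inside $S(n)$ and there are at most $O(n^2)$ of them. The key point is that in the regime $k \ll \log n$, the time it takes the growing droplet to cross a ``double gap'' (a pair of consecutive empty rows or columns) using only 1-neighbour infections is typically of order $n/(k\log n)$: a 1-neighbour infection on a side of semi-perimeter $\Theta(\log n)$ arrives at rate $\Theta(k\log n / n)$, and to cross the double gap one needs (at least) two such infections in the right places. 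Since $k \ll \log n$, this crossing time $n/(k\log n) \gg n/(\log n)^2$ is not negligible compared to the target time $(\pi^2/18 - \delta)n/\log n$; rather, each of a linearly-in-$\log n$ many double-gap crossings eats up a $\Theta(1/\log n)$ fraction of the budget.

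The core of the argument is the ``hierarchy'' machinery from Holroyd~\cite{Hol}. I would set up hierarchies of rectangles exactly as there: if the origin is infected by time $t = (\pi^2/18-\delta)n/\log n$, then (running the rectangles process of Aizenman--Lebowitz, but now tracking time) there is an internally-filled rectangle containing the origin whose semi-perimeter is $\Theta(\log n)$, and such a rectangle is internally filled only if a corresponding hierarchy exists. The combinatorial count of hierarchies is as in~\cite{Hol}; what changes is the probabilistic estimate for each ``seed'' and each ``growth step'' of the hierarchy. For the growth steps, I would show that growing a rectangle of semi-perimeter $\approx \ell$ across its next pair of bounding double gaps requires waiting for 1-neighbour infections, and couple the total waiting time over the whole hierarchy with a sum of independent exponential random variables with means $\Theta(n/(k\ell))$; summing $\sum_{\ell} \Theta(n/k\ell) \sim \Theta((n/k)\log(\log n))$ is too crude, so instead one must extract, as in~\cite{Hol}, a collection of $\Omega(\log n)$ double gaps that have to be crossed \emph{in a fixed order}, so that the relevant times genuinely add, and the resulting sum is concentrated around $(\pi^2/18)n/\log n \cdot (1+o(1))$ by the same variational computation ($\int_0^\infty$ of the relevant integrand giving $\pi^2/18$) that underlies Holroyd's constant, combined with a Freedman-type concentration bound (Lemma~\ref{lem:Freedman}) to get that the probability of beating time $t$ is $n^{-\Omega(1)}$ — or more precisely small enough to beat the $\binom{\log n}{o(\log n)} = n^{o(1)}$ choices for the hierarchy via a union bound. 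I would also need to check that 2-neighbour infections only help the droplet grow ``along'' a side and never let it jump a double gap, so the double-gap crossings really are the bottleneck, and that the contribution of the nucleations themselves (the Poisson process of seeds at rate $1/n$) is handled by the same counting as the density-$p$ seeds in~\cite{Hol} with $p \approx t/n = \Theta(1/\log n)$.

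The main obstacle I expect is the one flagged in the footnote to the regime-$(a)$ sketch: unlike in pure bootstrap percolation, where crossing a double gap is instantaneous given the seeds are present, here one must simultaneously control (i) which sites are seeds and (ii) how long the dynamics takes, and these interact. Concretely, the difficulty is to identify, within each hierarchy, a linear-in-$\log n$ number of double gaps whose crossing times are independent and must be incurred sequentially, \emph{despite} the droplet being free to grow in all four directions and to exploit whichever double gap happens to be crossed first; Holroyd's ``pods'' and the careful choice of which direction the hierarchy branches in will need to be redone with the timing in mind. Once that ordered sequence of crossings is extracted, the concentration via Lemma~\ref{lem:Freedman} and the union bound over the $n^{o(1)}$ hierarchy shapes is routine, and the variational constant $\pi^2/18$ comes out of the same calculation as in~\cite{Hol}.
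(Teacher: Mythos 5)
Your high-level framework is correct and matches the paper's: restrict to $S(n)$ via Lemma~\ref{lem:Sn}, run a (time-tracking) rectangles process, adapt Holroyd's hierarchy machinery, bound the probability of each growth step, and conclude with a union bound over the $n^{o(1)}$ hierarchies. You also correctly identify that 2-neighbour infections cannot increase the semi-perimeter (the paper's Observation~\ref{obs:basiccoupling:lower}), so the 1-neighbour crossings are the only mechanism by which the modified process can outperform pure bootstrap, and that Freedman's inequality is the right tool for concentration.

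However, there is a genuine conceptual gap in how you propose to get the constant $\pi^2/18$. You suggest extracting ``a collection of $\Omega(\log n)$ double gaps that have to be crossed in a fixed order'' and showing that ``the resulting sum is concentrated around $(\pi^2/18)n/\log n$.'' This cannot be right: if a critical rectangle required $\Omega(\log n)$ double-gap crossings, each taking time $\approx n/(k\log n)$, the total would be $\Omega(n/k) \gg n/\log n$ (since $k \ll \log n$), which would prove a lower bound strictly larger than $\pi^2 n/(18\log n)$ and contradict Proposition~\ref{prop:a:upper}. In fact, the typical internally spanned critical rectangle has essentially \emph{no} double gaps to cross, exactly as in pure bootstrap. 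The actual argument inverts your logic: the crossing-time calculation is used to show that \emph{few} ($o(\log n)$) rows/columns of the annulus can have been reached by a 1-neighbour infection within time $t$ (this is where the sum of exponentials with means $n/(2ki)$ and Lemma~\ref{lem:poisson} come in, giving probability $\le n^{-3}$ of $\ge\eps\log n$ such infections), and one then takes a union bound over the $\binom{\log n}{o(\log n)} = n^{o(1)}$ possible locations of the affected rows/columns. Conditional on those locations, the remaining rows/columns must contain no double gap, and \emph{that} event has probability $\approx P_p(R,R')$; the constant $\pi^2/18$ enters through these pure bootstrap probabilities multiplied over the hierarchy, exactly as in Holroyd, not through any sum of crossing times. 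Concretely, the paper's key estimate is $\Pr(\Delta^*_t(R,R')) = n^{o(1)}\,P_p(R,R')$, and once this is established the remainder of the proof is a line-by-line transcription of~\cite{Hol}. Your plan as stated would need to be reorganised along these lines; as written, the step from ``crossing times add'' to ``the sum is $\approx \pi^2 n/(18\log n)$'' has no valid derivation.
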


The proposition generalises (the lower bound of) Holroyd's theorem (which corresponds to the case $k = 1$), and follows by adapting his proof. The bound on $k$ is best possible, since for any $c > 0$ there exists $\delta = \delta(c) > 0$ such that the following holds: if $k = c \log n$ then $\tau < \big( \frac{\pi^2}{18} - \delta \big) \frac{n}{\log n}$ with high probability. (This is simply because the probability of each `double gap' in the growth of a critical droplet, see~\cite[Section~3]{Hol}, decreases by a factor bounded away from 1.)

Rather than give the full (and rather lengthy) details of the proof of Proposition~\ref{prop:a:lower}, we will assume that the reader is familiar with~\cite{Hol} and sketch the key modifications required in our more general setting. We will follow the notation of~\cite{Hol} as much as possible; in particular, we set $\lambda=\pi^2/18$, and choose positive constants $B = B(\delta)$ (large), $Z = Z(B,\delta)$ (small), and $T = T(B,Z,\delta)$ (smaller).  

One of the key notions introduced in~\cite{Hol} is that of a \emph{hierarchy}, which is a way of recording just enough information about the growth of a critical droplet so that a sufficiently strong bound can be given for its probability. Our definition of a hierarchy will be similar to that of Holroyd, but the event that the hierarchy is `satisfied' will have to take 1-neighbour infections into account. The first step is to define the following `random' rectangles process. We will also use this process (though with a different value of $t$) in the next section. 

\begin{defn}[The random rectangles process]\label{def:rrp}
Given $t > 0$, let $A \subset S(n)$ be the set of nucleations that occur in $S(n)$ by time $t$. Run the standard rectangles process until it stops, and then run the following process for time $t$: 
\begin{enumerate}
\item[$(a)$] Wait for a 1-neighbour infection, add this site to the current collection of rectangles, and freeze time. \smallskip
\item[$(b)$] Run the standard rectangles process until it stops.\smallskip 
\item[$(c)$] Unfreeze time, and return to step $(a)$. 
\end{enumerate}
If $R$ is a rectangle, then we will write $I_t(R)$ for the event that $R$ appears in the collection of rectangles at some stage during the random rectangles process.
\end{defn}

The random rectangles process will be a useful tool, allowing us to easily obtain an Aizenman--Lebowitz-type lemma (Lemma~\ref{le:AL:RRP}, below) and to show that good and satisfied hierarchies exist (see below). However, it has the significant disadvantage that $I_t(R)$ is not an increasing event. 
For this reason we will need to define the following event (cf. the random variable $T^*_m(A)$, defined in Section~\ref{sec:AR}), which is implied by $I_t(R)$.

\begin{defn}
Given a rectangle $R$ and $t > 0$, let $I^*_t(R)$ denote the event that $R$ is completely infected by time $t$ in the modified Kesten--Schonmann process, with initially infected set $R \cap A$, in which 2-neighbour infections occur instantaneously. If $I^*_t(R)$ holds then we say that $R$ is \emph{internally filled} by time $t$. 
\end{defn}

It is easy to see that $I_t(R) \Rightarrow I^*_t(R)$. Moreover, by Lemma~\ref{lem:Sn}, with high probability no infection which occurs outside the square $S(n)$ can cause the origin to be infected in time $o(n)$. It will therefore suffice to prove that, with high probability, the origin is not infected in the random rectangles process with $t = ( \lambda - \delta) \frac{n}{\log n}$.

Let us fix $\delta > 0$ and
$$p := \frac{\lambda - \delta}{\log n}$$
for the rest of this section, and note that (if $t = pn$, as above) the elements of $A$ are chosen independently at random with probability $1-e^{-t/n}\leq p$. The main step in the proof of Proposition~\ref{prop:a:lower} 
is the following bound on the probability that a rectangle of `critical' size appears in the random rectangles process. 

\begin{lemma}\label{lem:Bpbound}
Let $R$ be a rectangle with $B/p \le \phi(R) \le 2B/p$. Then 
$$\Pr\big( I_t(R) \big) \, \le \, \exp\bigg( - \frac{2\lambda - \delta}{p} \bigg).$$
\end{lemma}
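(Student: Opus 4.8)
The strategy is to adapt Holroyd's argument bounding the probability that a critical rectangle is internally filled in $2$-neighbour bootstrap percolation, upgrading it to account for the $1$-neighbour infections that the nucleation and growth process permits. Since $I_t(R)\Rightarrow I^*_t(R)$, it suffices to bound $\Pr(I^*_t(R))$. I would first recall the bootstrap skeleton: by the Aizenman--Lebowitz lemma applied inside $R$ (or rather the version for the random rectangles process, Lemma~\ref{le:AL:RRP}), if $R$ is internally filled then there is a \emph{hierarchy} for $R$ of the kind defined in~\cite{Hol}, recording a branching sequence of sub-rectangles whose seeds are disjoint. The probability estimate then factorizes as a product over the ``edges'' of the hierarchy of (i) the probability that each seed rectangle is internally filled by the nucleations it contains, and (ii) the probability of the long ``growth steps'' in which one rectangle grows out to a larger one. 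The key point, already flagged in the paper's sketch of regime~$(a)$, is that in our setting a growth step of vertical extent $\Delta$ costs not just the usual bootstrap factor but additionally requires crossing roughly $\Delta$ \emph{double gaps}, each of which costs a $1$-neighbour infection taking expected time $\asymp n/(k\,\phi)$, where $\phi$ is the current semi-perimeter — and since $\phi\le 2B/p \asymp B\log n$, this is $\gtrsim n/(Bk\log n)\gg$ any fixed constant once $k\ll\log n$. Compared to pure bootstrap, where the analogous probability of having no double gap over a distance $\Delta$ is $(1-q)^\Delta$ with $q\asymp p^2$ (giving the $\lambda/p$ in the exponent via $\int$-type sums), here we must also pay for the \emph{time} to cross those double gaps.

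Concretely, I would run the following steps. \textbf{Step 1:} fix a hierarchy structure (there are only $n^{o(1)}$ of them, as in~\cite{Hol}, because the number of vertices in a good hierarchy is bounded in terms of $B$ and $Z$), so it suffices to bound the probability that a fixed hierarchy is ``satisfied'' and multiply by $n^{o(1)}$ at the end — this $n^{o(1)}$ is harmless against the $\exp(-(2\lambda-\delta)/p)=n^{-(2\lambda-\delta)/(\lambda-\delta)}$ we are aiming for, provided $\delta$ is small. \textbf{Step 2:} for each seed of the hierarchy, bound the probability it is internally filled using Lemma~\ref{lem:acc:lower} together with the nucleation-counting bound Lemma~\ref{lem:countingnucleations}: a seed of semi-perimeter $m$ needs $\gtrsim m$ nucleations or else takes time $\gtrsim (n/2k)\log(m/\ell)$ to fill, and summing over the allowed number $\ell$ of nucleations gives the bootstrap-type contribution $\exp(-c\,m\log(1/p))$ matching Holroyd's per-seed cost $p^{\Omega(m)}$ up to the value of the constant. \textbf{Step 3:} for each long growth step from a rectangle of semi-perimeter $\phi$ growing by total amount $\Delta$ in the ``short'' direction, show the probability is at most $\exp(-(1-o(1))\,q\,\Delta)$ where $q$ is Holroyd's double-gap probability for a $p$-random set — this is exactly the estimate from~\cite[Section~3, Propositions~about crossing a rectangle]{Hol}, and the $1$-neighbour infections only help the lower bound (they cost extra time, which we can simply discard, keeping only the purely combinatorial double-gap obstruction). \textbf{Step 4:} assemble the product over the hierarchy. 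The sum of the $\Delta$'s over growth steps along any root-to-leaf branch is $(1-o(1))\phi(R)\ge (1-o(1))B/p$, and Holroyd's variational computation shows $\sum q\,\Delta \ge (2\lambda-o_B(1))/p$ when one optimizes over how to split the growth between the two coordinate directions and over the branching; choosing $B$ large and $Z$ small makes the $o_B(1)$ error smaller than $\delta/2$, and absorbing the $n^{o(1)}$ from Step~1 gives the claimed bound $\exp(-(2\lambda-\delta)/p)$.

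\textbf{Main obstacle.} The delicate point is Step~4: reproducing Holroyd's tight constant $2\lambda = \pi^2/9$ in the exponent via the ``$\int_0^\infty -\log\beta(u/z)$''-type variational problem, while simultaneously carrying the extra $1$-neighbour bookkeeping through the hierarchy without losing a constant factor. In particular one must be careful that the presence of $1$-neighbour infections does not let a droplet ``cheat'' past a double gap more cheaply than in pure bootstrap — but it cannot, because crossing a double gap still requires \emph{some} infection to appear in the gap, and whether that infection is a nucleation or a $1$-neighbour infection, it is at least as costly (in probability, over the time window $t$) as a nucleation; making this precise and uniform over the $n^{o(1)}$ hierarchies, and checking that the error terms are genuinely $o_B(1)$ rather than merely $o(1)$, is where the real work lies. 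This is presumably why the authors only sketch the argument and refer the reader to~\cite{Hol} for the bulk of the machinery.
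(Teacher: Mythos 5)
Your overall skeleton matches the paper's: hierarchies as in Holroyd, a per-leaf seed bound, a per-edge growth bound, combined via the van den Berg--Kesten inequality, followed by Holroyd's variational computation. But Step~3 (and the ``Main obstacle'' paragraph elaborating it) contains a decisive conceptual error, and it is precisely the error one must avoid for the lemma to be true at all.

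You write that during a growth step ``the $1$-neighbour infections only help the lower bound (they cost extra time, which we can simply discard, keeping only the purely combinatorial double-gap obstruction),'' and later that a $1$-neighbour crossing of a double gap ``is at least as costly (in probability, over the time window $t$) as a nucleation.'' Both claims have the direction of the inequality backwards. A $1$-neighbour infection at a boundary site of the droplet occurs at rate $k/n \ge 1/n$, i.e.\ at least as \emph{fast} as a nucleation, not at least as slowly. The $1$-neighbour mechanism is an additional way for the event $\Delta^*_t(R,R')$ to occur, so it only \emph{increases} $\Pr\big(\Delta^*_t(R,R')\big)$ relative to $P_p(R,R')$; it cannot be ``discarded'' when proving an upper bound on that probability. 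If it could, Lemma~\ref{lem:Bpbound} would hold for all $k$, contradicting the paper's own remark that the constant $\pi^2/18$ strictly decreases once $k = \Theta(\log n)$. The hypothesis $k\ll\log n$ must enter the growth-step estimate, and your Step~3 does not use it.

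What the paper actually does for the growth step is quite different from ``discard the $1$-neighbour infections.'' It shows $\Pr\big(\Delta^*_t(R,R')\big)=n^{o(1)}P_p(R,R')$ by bounding the \emph{total number} of sites in the annulus $R'\setminus R$ that get infected via $1$-neighbour infections. Each such infection is a step of the semi-perimeter-indexed waiting chain $Y_i\sim\Exp(n/2ki)$ with $i\le\phi(R')=O(B/p)=O(B\log n)$, so each takes time at least of order $n/(k\log n)$; since the budget is $t=pn=\Theta(n/\log n)$ and $k\ll\log n$, one can choose $\eps(n)\to0$ slowly (the paper takes $e^{-10/\eps}\gg k/\log n$) so that more than $\eps\log n$ $1$-neighbour infections in the annulus has probability $O(n^{-3})\ll P_p(R,R')$. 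Conditioned on that not happening, one takes a union bound over the at most $n^{O(\eps)}=n^{o(1)}$ choices of which rows and columns to exempt, and observes that all remaining rows and columns must genuinely contain no double gap, giving the factor $P_p(R,R')$. This is the step your proposal is missing, and it is where the assumption $k\ll\log n$ is used.

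Two smaller discrepancies: your Step~2 invokes Lemma~\ref{lem:acc:lower} and Lemma~\ref{lem:countingnucleations} for the seed bound, whereas the paper proves a bespoke estimate (Lemma~\ref{lem:seeds}) giving $\Pr(I^*_t(R))\le e^{-B\phi(R)}$ by splitting on whether $R$ has at least $\phi(R)/4$ nucleations; the two approaches are in the same spirit and either should work. And your Step~1 counts $n^{o(1)}$ hierarchies; that matches the paper, which bounds $\Pr(I_t(R))$ by a sum over good hierarchies via Lemma~\ref{le:boundoverH} and then runs Holroyd's machinery unchanged.
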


To prove Lemma~\ref{lem:Bpbound}, we will use the following modification of Holroyd's notion of a `hierarchy'. Given two rectangles $R \subset R'$, let $\Delta^*_t(R,R')$ denote the event that $R'$ is completely infected by time $t$ in the modified Kesten--Schonmann process with initially infected set $R \cup (R' \cap A)$. Given a directed graph $G$ and a vertex $v\in V(G)$, we write $N_G^\to(v)$ for the set of out-neighbours of $v$ in $G$. 


\begin{defn}\label{def:hierarchy}
A \emph{hierarchy} $\H$ for a rectangle $R$ is an ordered pair $\H=(G_\H,D_\H)$, where $G_\H$ is a directed rooted tree such that all of its edges are directed away from the root, and $D_\H \colon V(G_\H) \to 2^{\Z^2}$ is a function that assigns to each vertex of $G_\H$ a rectangle\footnote{We will usually write $D_u$ for the rectangle associated to $u$, instead of the more formal $D_\H(u)$.} such that the following conditions are satisfied:
\begin{enumerate}
\item[$(i)$] The root vertex corresponds to $R$.\smallskip
\item[$(ii)$] Each vertex has out-degree at most 2.\smallskip
\item[$(iii)$] If $v \in N_{G_\H}^\to(u)$ then $D_v \subset D_u$.\smallskip 
\item[$(iv)$] If $N_{G_\H}^\to(u) = \{v,w\}$ then $D_u = \< D_v \cup D_w \>_2$.
\end{enumerate}
We say that $\H$ is \emph{good} if moreover:
\begin{enumerate}
\item[$(v)$] $u \in V(G_\H)$ is a leaf if and only if $\sh(D_u) \leq Z/p$.\smallskip
\item[$(vi)$] If $N_{G_\H}^\to(u)=\{v\}$ and $|N_{G_\H}^\to(v)| = 1$ then $T / p \leq \phi(D_u) - \phi(D_v) \leq 2T / p$.\smallskip
\item[$(vii)$] If $N_{G_\H}^\to(u)=\{v\}$ and $|N_{G_\H}^\to(v)| \ne 1$ then $\phi(D_u) - \phi(D_v) \leq T/p$.\smallskip
\item[$(viii)$] If $N_{G_\H}^\to(u)=\{v,w\}$ then $\phi(D_u)-\phi(D_v) \geq T/p$. 
\end{enumerate}
We say that $\H$ is \emph{satisfied} if the following events all occur disjointly:
\begin{enumerate}
\item[$(a)$] $I^*_t( D_u )$ for every leaf $u \in V(G_\H)$.\smallskip
\item[$(b)$] $\Delta^*_t(D_v,D_u)$ for every pair $\{u,v\}$ such that $N_{G_\H}^\to(u)=\{v\}$.
\end{enumerate}
\end{defn}

The reader should think of the event that $\H$ is satisfied as being `essentially' equivalent to the event that the rectangle $D_u$ appears in the random rectangles process for every vertex $u \in V(G_\H)$. The proof of the following lemma is exactly the same as that of Propositions~31 and~33 of~\cite{Hol}.

\begin{lemma}\label{le:goodsat}
If $R$ is a rectangle that appears in the random rectangles process, then there exists a good and satisfied hierarchy for $R$.
\end{lemma}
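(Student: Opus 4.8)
The plan is to follow, essentially verbatim, the construction of Holroyd~\cite[Propositions~31 and~33]{Hol}, inserting the $1$-neighbour infections at the points where the $2$-neighbour bootstrap dynamics appear in his argument. Throughout, the ``history'' of a rectangle $D$ means the sequence of operations of the random rectangles process (Definition~\ref{def:rrp}) that produced $D$: first the merges of the standard rectangles process run on the nucleations of $A$ lying in $D$, and then the interleaved $1$-neighbour infections (inside $D$, occurring by time $t$) together with their subsequent standard-rectangles-process closures. Two structural facts about this history will be used repeatedly, both inherited directly from the standard rectangles process: (1) at every stage the current rectangles are \emph{disjointly internally filled}, i.e.\ each is the $2$-neighbour closure of the nucleations and $1$-neighbour infections assigned to it, and these assigned sets are pairwise disjoint; and (2) every rectangle produced from an assigned set $A'$ is contained in $\langle A' \rangle$, so the activity producing a rectangle $D$ takes place inside $D$.

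First I would build the rooted tree $G_\H$ and the labelling $D_\H$ by the same top-down recursion as in~\cite{Hol}: the root is labelled $R$; a rectangle $D$ with $\sh(D) \le Z/p$ is declared a leaf (condition~(v)); and for $\sh(D) > Z/p$ one inspects the history of $D$ and, exactly as in~\cite{Hol}, either splits off two children (at the last balanced merge in that history, after discarding any small ``seed'' rectangles absorbed along the way) or passes to a single child $D' \subset D$ for which $\phi(D) - \phi(D')$ lies in the range prescribed by conditions~(vi)--(viii). Because away from merges each step of the random rectangles process changes $\phi$ by at most $1$, this greedy choice always lands in the required window, and conditions~(i)--(iv) and~(viii) are immediate from the construction; the only new feature is that the running semi-perimeter is now that of a process which also performs $1$-neighbour steps, and this does not affect the combinatorics. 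One checks that the recursion terminates (each child has strictly smaller short side), so that $G_\H$ is a finite rooted tree.

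Next I would verify that the hierarchy is satisfied. For a leaf $u$, the rectangle $D_u$ is a seed appearing in the history, so by fact~(2) it is filled by time $t$ starting from $D_u \cap A$ in the modified Kesten--Schonmann process; that is, $I^*_t(D_u)$ holds, which is clause~(a) (equivalently, invoke the implication $I_t(D_u) \Rightarrow I^*_t(D_u)$ recorded before Lemma~\ref{lem:Bpbound}). For a single-child edge $u \to v$, everything that turns $D_v$ into $D_u$ in the history happens inside $D_u$ and consists of $1$-neighbour infections and $2$-neighbour closures occurring by time $t$; hence in the modified Kesten--Schonmann process started from $D_v \cup (D_u \cap A)$ the set $D_u$ is completely infected by time $t$, i.e.\ $\Delta^*_t(D_v, D_u)$, which is clause~(b). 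Split vertices need nothing beyond condition~(iv). It then remains to see that the events in (a) and (b) occur \emph{disjointly}: by fact~(1) the nucleations assigned to distinct rectangles of the history are disjoint, and each $1$-neighbour infection, when it occurs, participates in exactly one growth step and so can be charged to a unique seed or single-child edge of the hierarchy; partitioning the nucleations and the $1$-neighbour-infection clock rings along the tree in this way exhibits pairwise disjoint witness sets for all the events in (a) and (b), so the BK-type disjoint occurrence holds. This is the content of~\cite[Proposition~33]{Hol}, transported essentially unchanged.

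The main obstacle is the non-locality of $I_t(\cdot)$ flagged after Definition~\ref{def:rrp}: the appearance of an intermediate rectangle $D_v$ in the random rectangles process can in principle be influenced by activity outside $D_v$, so one cannot simply attach the local events $I^*_t$ and $\Delta^*_t$ to the vertices of the hierarchy by fiat. The resolution, exactly as in~\cite{Hol}, is that the hierarchy is designed so that $I_t$ is never needed at an internal vertex: splits are governed purely by the combinatorial condition~(iv), single-child edges by the genuinely local event $\Delta^*_t$, and the only use of ``appears in the process'' is at the leaves, where it degrades harmlessly to $I^*_t$. Making the charging of $1$-neighbour infections to edges consistent with this disjointness bookkeeping is then the one place where a little care, rather than any genuinely new idea, is required.
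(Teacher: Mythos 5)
Your proposal matches the paper's treatment: the authors state only that the proof is exactly the same as Holroyd's Propositions~31 and~33, and you have correctly reconstructed that argument, identifying the two points that require care in the present setting — that the history of a rectangle in the random rectangles process now includes $1$-neighbour infections, which must be folded into the disjointness accounting for the van den Berg--Kesten step, and that the non-locality of $I_t$ is sidestepped by attaching only the local events $I^*_t$ and $\Delta^*_t$ to the vertices of the hierarchy. One small imprecision: the claim that ``away from merges each step of the random rectangles process changes $\phi$ by at most $1$'' is not the right justification (a single $1$-neighbour infection can trigger a cascade of merges in step~(b) of Definition~\ref{def:rrp}), but what Holroyd's construction actually relies on is that $\phi$ at most doubles at each step, which is exactly the Aizenman--Lebowitz property (Lemma~\ref{le:AL:RRP}) and does hold here, so this does not affect the argument.
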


Let us write $\H_R$ for the set of all good hierarchies for $R$, and $L(\H)$ for the set of leaves of $G_\H$. We write $\prod_{u \to v}$ for the product over all pairs $\{u,v\} \subset V(G_\H)$ such that $N_{G_\H}^\to(u) = \{v\}$. The following lemma bounds the probability of the event $I_t(R)$.

\begin{lemma}\label{le:boundoverH}
Let $R$ be a rectangle. Then
\begin{equation}\label{eq:boundoverH}
\Pr\big( I_t(R) \big) \leq \sum_{\H \in \H_R} \bigg( \prod_{u \in L(\H)} \Pr\big( I^*_t(D_u)\big) \bigg)\bigg( \prod_{u \to v} \Pr\big( \Delta^*_t(D_v,D_u) \big) \bigg).
\end{equation}
\end{lemma}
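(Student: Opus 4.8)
The plan is to mimic the corresponding step in Holroyd's argument~\cite{Hol}: first pass from the event $I_t(R)$ to a union over good hierarchies for $R$, and then control the probability that each individual hierarchy is satisfied by a product of probabilities, using a van den Berg--Kesten-type inequality.

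First I would invoke Lemma~\ref{le:goodsat}: whenever $R$ appears in the random rectangles process there is some $\H \in \H_R$ that is good and satisfied, so $I_t(R) \subseteq \bigcup_{\H \in \H_R}\{\H \text{ is satisfied}\}$, and the union bound gives $\Pr(I_t(R)) \le \sum_{\H \in \H_R} \Pr(\H \text{ is satisfied})$. It then remains to show, for each fixed $\H \in \H_R$, that
$$\Pr\big(\H \text{ is satisfied}\big) \le \bigg( \prod_{u \in L(\H)} \Pr\big( I^*_t(D_u) \big) \bigg) \bigg( \prod_{u \to v} \Pr\big( \Delta^*_t(D_v, D_u) \big) \bigg).$$
To do this I would realise the time-$t$ modified Kesten--Schonmann process via independent Poisson clocks attached to the sites of $S(n)$ --- one rate-$1/n$ nucleation clock and one rate-$k/n$ $1$-neighbour clock per site, with $2$-neighbour infections resolved instantaneously --- so that the driving randomness becomes a product measure. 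Under this coupling every event $I^*_t(D_u)$ (for a leaf $u$) and every event $\Delta^*_t(D_v, D_u)$ (for an edge with $N_{G_\H}^\to(u) = \{v\}$) is \emph{increasing}, since making any clock ring earlier only accelerates the spread of infection. By Definition~\ref{def:hierarchy} the event that $\H$ is satisfied is, by construction, the disjoint occurrence of precisely this family of increasing events, so the displayed inequality is exactly the van den Berg--Kesten inequality applied to them; summing over $\H \in \H_R$ then yields~\eqref{eq:boundoverH}.

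The point that needs care is that the relevant product space is not a finite product of two-point spaces, so the classical BK inequality does not apply verbatim. I would handle this either by citing the extension of the BK inequality to increasing events on arbitrary product probability spaces due to van den Berg and Fiebig, or, more elementarily, by discretisation: round every clock-ring time up to the nearest multiple of $\eps > 0$ (truncating at $t$), apply the classical BK inequality on the resulting finite product space to the corresponding discretised events, and let $\eps \to 0$, using that these events decrease to the original ones. I expect this limiting step --- together with the small amount of bookkeeping needed to confirm that the disjoint witnesses furnished by Lemma~\ref{le:goodsat} genuinely live in disjoint sets of clock coordinates, and not merely in disjoint subsets of the nucleation set --- to be the main obstacle; everything else is a direct combination of Lemma~\ref{le:goodsat}, the definition of a satisfied hierarchy, and BK.
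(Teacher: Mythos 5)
Your proposal is correct and follows essentially the same route as the paper: reduce to hierarchies via Lemma~\ref{le:goodsat} and a union bound, then apply the van den Berg--Kesten inequality to the disjointly occurring increasing events $I^*_t(D_u)$ and $\Delta^*_t(D_v,D_u)$ appearing in the definition of a satisfied hierarchy. The paper states this in a single line without dwelling on the fact that the underlying probability space is a continuous-time Poisson product rather than a finite product of two-point spaces; your attention to that point (via discretisation or the van den Berg--Fiebig extension) is a sensible precaution, but it is a standard technicality rather than a genuinely different argument.
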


\begin{proof}[Proof of Lemma~\ref{le:boundoverH}]
Since the events $I^*_t(D_u)$ for $u \in L(\H)$ and $\Delta^*_t(D_v,D_u)$ for $u \to v$ are increasing and occur disjointly, this follows from Lemma~\ref{le:goodsat} and the van den Berg--Kesten inequality.
\end{proof}

\pagebreak

The probability that a seed is internally filled is easily bounded by the following lemma.

\begin{lemma}\label{lem:seeds}
If $\phi(R) \le Z/p$ then $\Pr\big( I^*_t(R) \big) \le e^{-B\phi(R)}$. 
\end{lemma}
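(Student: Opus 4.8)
The plan is to split according to the number $\ell := |R \cap A|$ of nucleations inside $R$, treating separately the case of many nucleations (where a crude union bound over $A$ suffices) and the case of few nucleations (where $R$ simply cannot be filled fast enough, even with the help of $1$-neighbour infections). Throughout write $\phi := \phi(R)$, and recall that $\phi \le Z/p$, that $|R| \le \phi^2/4$, that each site of $S(n)$ lies in $A$ independently with probability $1 - e^{-t/n} \le p$ (so $t/n = p$), and that $k \ll \log n$ forces $kp = k(\lambda-\delta)/\log n \to 0$.

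\emph{Many nucleations.} If $\ell \ge \phi/4$ then, since $|R|p \le \phi^2 p/4$ and $\phi p \le Z$, the union bound gives
\[
\Pr\big( |R \cap A| \ge \phi/4 \big) \, \le \, \sum_{\ell \ge \phi/4} \binom{|R|}{\ell} p^\ell \, \le \, \sum_{\ell \ge \phi/4} \bigg( \frac{e|R|p}{\ell} \bigg)^\ell \, \le \, \sum_{\ell \ge \phi/4} (e\phi p)^\ell \, \le \, 2 (eZ)^{\phi/4},
\]
and choosing $Z = Z(B)$ small enough that $(eZ)^{1/4} \le e^{-(B+1)}$ bounds this by $2 e^{-(B+1)\phi} \le \tfrac12 e^{-B\phi}$, using $\phi \ge 2$.

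\emph{Few nucleations.} If $\ell < \phi/4$, the key observation is that $I^*_t(R)$ forces $T^*_\phi(R \cap A) \le t$: in the coupled rectangles representation of the modified process, when the (always disjoint, rectangular) infected region first covers $R$, its pieces $Q_1,\dots,Q_j$ satisfy $\sum_i \phi(Q_i) \ge \phi(R)$ — one sees this by noting that the pieces meeting a fixed side of $R$ have corresponding side-lengths summing to at least that of $R$ — so the total semi-perimeter, which is exactly the quantity tracked in Observation~\ref{obs:basiccoupling:lower}, has reached $\phi$. Conditioning on $|R \cap A| = \ell$ and applying Observation~\ref{obs:basiccoupling:lower} gives $T^*_\phi(R \cap A) \ge \sum_{i=2\ell}^{\phi} Y_i$; since each $Y_i$ ($i \le \phi$) dominates an $\Exp(n/2k\phi)$ variable, this sum dominates a sum of $s := \phi - 2\ell + 1 \ge \phi/2$ i.i.d.\ $\Exp(n/2k\phi)$ variables. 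As $\frac{n}{2k\phi}\cdot s \ge \frac{n}{4k} \ge e^2 t$ for large $n$ — this is the one place the hypothesis $k \ll \log n$ is used, via $\frac{1}{4k} \ge \frac{e^2(\lambda-\delta)}{\log n}$ — Lemma~\ref{lem:poisson}(b) yields
\[
\Pr\big( T^*_\phi(R\cap A) \le t \,\big|\, |R\cap A| = \ell \big) \, \le \, \bigg( \frac{et}{n/4k} \bigg)^{s} \, = \, (4ekp)^{s} \, \le \, (4ekp)^{\phi/2} \, \le \, \tfrac12 e^{-B\phi}
\]
for $n$ large, since $4ekp \to 0$. This bound is uniform in $\ell < \phi/4$, so summing it against $\Pr(|R\cap A| = \ell)$ costs a factor at most $1$. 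Adding the two cases gives $\Pr\big( I^*_t(R) \big) \le e^{-B\phi(R)}$.

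I expect the only real delicacy to be the constant-chasing that makes $(4ekp)^{\phi/2}$ beat $e^{-B\phi}$: this works because $B$ is a fixed constant while $kp \to 0$, and it is precisely where $k \ll \log n$ enters — consistent with the remark that this hypothesis is best possible. The supporting geometric fact (disjoint rectangles covering a rectangle $R$ have total semi-perimeter $\ge \phi(R)$) and the fact that the modified Kesten--Schonmann process keeps its infected set a disjoint union of rectangles are routine.
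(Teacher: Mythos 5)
Your proof is correct and takes essentially the same route as the paper's: split on whether $R$ contains at least $\phi(R)/4$ nucleations, bound the first case by a binomial tail using $Z$ small, and bound the second by coupling the growth of the total semi-perimeter with a sum of independent exponentials (Observation~\ref{obs:basiccoupling:lower}) which is too large to be $\le t$ when $k \ll \log n$. You have simply filled in the Poisson/Chernoff computation (Lemma~\ref{lem:poisson}(b)) and the disjoint-rectangles geometric fact that the paper leaves implicit.
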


\begin{proof}
Note first that $R$ contains at least $\phi(R) / 4$ nucleations with probability at most
$${\phi(R)^2 \choose \phi(R)/4} \bigg( \frac{t}{n} \bigg)^{\phi(R)/4} \, \le \, \big( 4e p \cdot \phi(R)\big)^{\phi(R)/4}  \, \le \, (4eZ)^{\phi(R)/4} \, \le \, e^{-B\phi(R)}$$
where $t = pn$ and since $Z = Z(B)$ was chosen sufficiently small. On the other hand, the probability that $I^*_t(R)$ occurs and $R$ contains at most $\ell = \phi(R) / 4$ nucleations is at most
$$\Pr\bigg( \sum_{i = 2\ell}^{\phi(R)} Y_i \le t \bigg) \, \le \, e^{-B\phi(R)}$$
where $Y_i \sim \Exp(n/2ki)$ are independent. Note that the final inequality holds since $n/k \gg t$, and thus $\sum_{i = 2\ell}^{\phi(R)} Y_i \le t$ implies that $Y_i = o\big(\Ex[Y_i]\big)$ for at least half of the variables $Y_i$ (cf.~the proof of Lemma~\ref{lem:Sn}). 
\end{proof}

Bounding $\Pr\big( \Delta^*_t(D_v,D_u) \big)$ when $N_{G_\H}^\to(u) = \{v\}$ is not so straightforward. Before stating the bound we will prove, we need a couple of definitions. 

\begin{defn}
Let $R \subset R'$ be rectangles with $R = [a_1,a_2] \times [b_1,b_2]$ and $R' = [a'_1,a'_2] \times [b'_1,b'_2]$. A \emph{double gap} in the annulus $R' \setminus R$ is a set $X$ such that 
\begin{itemize}
\item[$(a)$] $X = [a_1,a_2] \times \{m,m+1\}$ or $X = \{m,m+1\} \times [b_1,b_2]$ for some $m \in \Z$;
\item[$(b)$] $X$ intersects $R'$;
\item[$(c)$] $X$ is disjoint from $A \cup R$. 
\end{itemize}
Let $Q_p(R,R')$ denote the probability that there is no double gap in $R' \setminus R$.
\end{defn}

Note that if there is no double gap in $R' \setminus R$, then the event $\Delta^*_t(R,R')$ occurs. We will require the following bound on the probability of the event $\Delta^*_t(R,R')$. 

\begin{lemma}\label{lem:approx:double:gaps}
Let $R \subset R'$ be rectangles, and suppose that $Z/p \le \phi(R) \le 2B/p$ and $\phi(R') - \phi(R) \leq T/p$. Then
\begin{equation}\label{eq:approx:double:gaps}
\Pr\big( \Delta^*_t(R,R') \big) \le n^{o(1)} Q_p(R,R')^{1-\delta^2}.
\end{equation}
\end{lemma}

\begin{proof}
The proof is similar to that of~\cite[Proposition~22]{Hol}, the key idea being to partition the space according to the rows and columns of $R' \setminus R$ that contain either a nucleation in the corner regions (as in~\cite{Hol}), or a site that is infected via a 1-neighbour infection. We will prove that (with sufficiently high probability) only a small proportion of the rows and columns satisfy either condition;\footnote{More precisely, a small proportion of the rows and columns satisfy the first condition, and $o(\log n)$ of them satisfy the second.} the lemma will then follow via a short (and standard) calculation.  

To be precise, recall that $p = \frac{\lambda - \delta}{\log n}$, and let $\eps = \eps(n) > 0$ satisfy
\begin{equation}\label{eq:eps}
\eps \ll T \qquad \text{and} \qquad e^{-10/\eps} \gg \frac{k}{\log n},
\end{equation}
(so, in particular, $\eps(n) \to 0$ as $n \to \infty$), and let $E$ denote the event that at least $\eps \log n$ sites of $R' \setminus R$ are infected via 1-neighbour infections. (Recall that our initial infected set is $R \cup (R' \cap A)$, and we are running the modified Kesten--Schonmann process, i.e., allowing 1-neighbour infections and performing 2-neighbour infections instantaneously, for time $t$.) To bound the probability of this event, observe that
$$\Pr(E) \le \, \sum_{\ell = 0}^{\infty} \Pr\Big( \big| (R' \setminus R) \cap A \big| = \ell \Big) \Pr\bigg( \sum_{i = \phi(R) + 2\ell}^{\phi(R) + 2\ell + \eps \log n} Y_i \le t \bigg),$$
where the $Y_i \sim \Exp(n/2ki)$ are independent random variables, as usual. Now, since the area of $R' \setminus R$ is at most $(\log n)^2$, by our choice of $T$, we have 
$$\Pr\big( |(R' \setminus R) \cap A| = \ell \big) \, \le \, {(\log n)^2 \choose \ell} p^\ell \, \le \, \bigg( \frac{e(\log n)^2 p}{\ell} \bigg)^\ell \, \le \, e^{-\ell}$$
for every $\ell \ge 10\log n$, and $\ds\sum_{\ell = 10\log n}^\infty e^{-\ell} \le \frac{1}{n^3}$. On the other hand, if $\ell < 10\log n$ then 
$$\Pr\bigg( \sum_{i = \phi(R) + 2\ell}^{\phi(R) + 2\ell + \eps \log n} Y_i \le t \bigg) \, \le \, \Pr\bigg( \sum_{i = 5B\log n}^{(5B+ \eps) \log n} Y_i \le t \bigg).$$ 
To bound this expression, note that the second condition in~\eqref{eq:eps} implies that $n/k \gg te^{10/\eps}$, so the sum may be coupled from below by a sum of $\eps \log n$ independent $\Exp(t e^{10/\eps} / \log n)$ random variables. Thus,
\[
\Pr\bigg( \sum_{i = 5B\log n}^{(5B+ \eps) \log n} Y_i \le t \bigg) \, \le \, \bigg(\frac{e}{\eps \cdot e^{10/\eps}}\bigg)^{\eps \log n} \, \le \, \frac{1}{n^4},
\]
by Lemma~\ref{lem:poisson}.
Summing over the choices of $\ell < 10\log n$ and combining with the calculation for $\ell \geq 10\log n$, it follows that 
\begin{equation}\label{eq:marameo1}
\Pr(E) \le \frac{2}{n^3} \ll \bigg( \frac{Z}{2} \bigg)^{T/p} \le \bigg( \frac{Z}{2} \bigg)^{\phi(R') - \phi(R)} \le Q_p(R,R'),
\end{equation}
since $T = T(Z)$ is sufficiently small, $p = \frac{\lambda - \delta}{\log n}$, each row (or column) of length at least $Z/p$ contains an element of $A$ with probability at least $Z/2$, and recalling that $Q_p(R,R')$ is the probability that there is no double gap in $R' \setminus R$. 

We will now take a union bound over pairs $(L_1,L_2)$, where $L_1$ and $L_2$ denote (respectively) the set of rows and columns of $R' \setminus R$ that contain a 1-neighbour infection. 
Hence, writing $\LL$ for the collection of pairs $(L_1,L_2)$ of sets of rows and columns of $R' \setminus R$ such that $|L_1| + |L_2| \le 2\eps \log n$, we have 
\begin{equation}\label{eq:marameo2}
\Pr\big( \Delta^*_t(R,R') \big) \, \le \, \Pr(E) + \sum_{(S_1,S_2) \in \LL} \Pr\big( \Delta_t^*(R,R') \cap \{ L_1 = S_1 \} \cap \{L_2 = S_2 \} \big).
\end{equation}
Now, to bound the terms in the sum on the right-hand side, we simply repeat the calculation from~\cite{Hol}. To be more precise, we partition according to the pair $(M_1,M_2)$, where $M_1$ and $M_2$ denote (respectively) the set of rows and columns of $R' \setminus R$ whose intersection with the corner regions of $R' \setminus R$ contains a nucleation. We claim that, for each pair $(S_1,S_2) \in \LL$, we have
\begin{equation}\label{eq:marameo3}
\Pr\big( \Delta_t^*(R,R') \cap \{ L_1 = S_1 \} \cap \{L_2 = S_2 \} \big) \le n^{o(1)} \sum_{(M_1,M_2)} \bigg( \frac{2\sqrt{T}}{Z} \bigg)^{|M_1| + |M_2|} Q_p(R,R').
\end{equation}
Indeed, observe first that if $\Delta_t^*(R,R')$ holds, then the rows not in $S_1 \cup M_1$ and columns not in $S_2 \cup M_2$ \emph{cannot} contain a double gap. This event has probability at most 
$$\bigg( \frac{2}{Z} \bigg)^{|S_1| + |M_1| + |S_2| + |M_2|} Q_p(R,R') = n^{o(1)} \bigg( \frac{2}{Z} \bigg)^{|M_1| + |M_2|} Q_p(R,R'),$$
since $|S_1| + |S_2| = o(\log n)$, each row (or column) of length at least $Z/p$ contains an element of $A$ with probability at least $Z/2$, and recalling that $\phi(R) \ge Z/p$. Moreover, since $\phi(R') - \phi(R) \leq T/p$, the probability that the rows and columns of $(M_1,M_2)$ contain a nucleation is at most $T^{\max\{ |M_1|, |M_2| \}}$, and hence we obtain~\eqref{eq:marameo3}. 

Finally, observe that $Q_p(R,R') \le \big( 1 - e^{-3B} \big)^{(\phi(R') - \phi(R))/2}$, since $\phi(R) \le 2B/p$. Since $T = T(B,Z,\delta)$ is sufficiently small, it follows that 
$$\sum_{(M_1,M_2)} \bigg( \frac{2\sqrt{T}}{Z} \bigg)^{|M_1| + |M_2|} \le \big( 1 - e^{-3B} \big)^{-\delta^2(\phi(R') - \phi(R))/2} \le Q_p(R,R')^{-\delta^2}.$$
Hence, combining~\eqref{eq:marameo1},~\eqref{eq:marameo2} and~\eqref{eq:marameo3}, and recalling that $|\LL| = n^{o(1)}$, we obtain
$$\Pr\big( \Delta^*_t(R,R') \big) \, \le \, Q_p(R,R') + n^{o(1)} \sum_{(S_1,S_2) \in \LL} Q_p(R,R')^{1-\delta^2} \le n^{o(1)} \cdot Q_p(R,R')^{1-\delta^2},$$
as required.
\end{proof}

The proof of Lemma~\ref{lem:Bpbound} now follows from the calculation in~\cite{Hol}.

\begin{proof}[Sketch proof of Lemma~\ref{lem:Bpbound}]
Combining Lemmas~\ref{le:boundoverH},~\ref{lem:seeds} and~\ref{lem:approx:double:gaps}, and noting that the number of vertices in a hierarchy is bounded, it follows that
\begin{equation}\label{eq:Bpbound:proof}
\Pr\big( I_t(R) \big) \le n^{o(1)} \sum_{\H \in \H_R} \bigg( \prod_{u \in L(\H)} e^{-B\phi(R)} \bigg)\bigg( \prod_{u \to v} Q_p(R,R') \bigg)^{1-\delta^2}.
\end{equation}
Holroyd~\cite{Hol} proved that (roughly speaking) the sum on the right-hand side of~\eqref{eq:Bpbound:proof} is dominated by hierarchies which correspond to a single droplet that is always approximately square. More precisely (and more importantly for us), he proved (see~\cite[Section~10]{Hol}) that it is at most $\exp\big( - (2\lambda - \delta) / p \big)$, as required.
\end{proof}


Finally, let us note the following Aizenman--Lebowitz-type lemma, which follows immediately from the definition.

\begin{lemma}\label{le:AL:RRP}
If $R$ appears in the random rectangles process, then the following holds for every $1 \leq \ell \leq \phi(R)$. There exists a rectangle $R' \subset R$ which appears in the random rectangles process with $\ell \leq \phi(R') \leq 2\ell$.
\end{lemma}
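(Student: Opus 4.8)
The plan is to run, almost verbatim, the argument behind the classical Aizenman--Lebowitz lemma stated above, the only new feature being that the random rectangles process interleaves merges with the births of new singleton rectangles: the nucleations in the initial stage, and the sites added in step~$(a)$ of Definition~\ref{def:rrp} by $1$-neighbour infections. The two elementary inputs are that a newly born singleton is a $1\times 1$ rectangle, hence has semi-perimeter $2$, and that a merge which removes $R_i,R_j$ and adds $R''=\<A_i\cup A_j\>$ satisfies $R_i,R_j\subset R''$ (immediate from $A_i,A_j\subset A_i\cup A_j$ and monotonicity of $\<\,\cdot\,\>$) and $\phi(R'')\le\phi(R_i)+\phi(R_j)$. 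The latter is exactly the estimate underlying the one-line proof of the original lemma (``the maximum semi-perimeter at most doubles''): since $R''$ is the bounding box of $R_i\cup R_j$ and the two are close enough to merge under the $2$-neighbour rule, a short case check on the two coordinates (an overlap of at least one line in one coordinate absorbs a possible one-line gap in the other) gives the bound with no additive error.

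Given the rectangle $R$ that appears in the process, I would then pass to the \emph{lineage} of $R$: because each rectangle, once used in a merge, is deleted, the rectangles that were (transitively) merged to produce $R$ form a rooted binary tree $\mathcal{B}$ with root $R$, whose internal vertices are the merges producing $R$ and whose leaves are the singletons absorbed into $R$. Every vertex of $\mathcal{B}$ carries a rectangle that appeared in the collection at some stage of the process, and iterating the containment observation along root-to-vertex paths shows every such rectangle is contained in $R$. Moreover, along each edge of $\mathcal{B}$ the semi-perimeter cannot increase, and if a vertex $u$ has two children then the larger of their semi-perimeters is at least $\phi(u)/2$, where $\phi(u)$ denotes the semi-perimeter of the rectangle at $u$.

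Finally I would perform the usual descent down $\mathcal{B}$. Fix $1\le\ell\le\phi(R)$ and start at the root, where $\phi\ge\ell$. As long as the current vertex $u$ has $\phi(u)>2\ell$ it is not a leaf (leaves have semi-perimeter $2\le 2\ell$), so it has two children, and I move to whichever child $v$ has $\phi(v)\ge\phi(u)/2>\ell$; this preserves $\phi>\ell$ and strictly descends in the finite tree, so it must halt, at a vertex whose rectangle $R'\subset R$ satisfies $\ell\le\phi(R')\le 2\ell$ (if $\ell=1$ this vertex could be a leaf, but then $\phi(R')=2\in[1,2]$, still in range; if we never descend at all then $R'=R$ already works). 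Since $R'$ appeared in the random rectangles process and $R'\subset R$, this is exactly the assertion. I do not expect a genuine obstacle here — the author is right that it is essentially immediate — the only points worth stating carefully being the clean additive-constant-free form of $\phi(R'')\le\phi(R_i)+\phi(R_j)$ and the bookkeeping that a $1$-neighbour infection enters the lineage only as a singleton leaf; everything else is verbatim Aizenman--Lebowitz.
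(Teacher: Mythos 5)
Your proof is correct and takes essentially the same route as the paper, which states the lemma without proof on the grounds that it ``follows immediately from the definition''; your lineage-tree descent is precisely the standard Aizenman--Lebowitz argument, adapted to account for the new singleton rectangles born from $1$-neighbour infections. The two points you single out — that a new singleton has semi-perimeter $2$, and that $\phi(\langle R_i\cup R_j\rangle)\le\phi(R_i)+\phi(R_j)$ for a mergeable pair — are exactly the facts implicit in the one-line proof the paper gives for the classical version.
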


We can now complete the proof of Proposition~\ref{prop:a:lower}. 

\begin{proof}[Proof of Proposition~\ref{prop:a:lower}]
Let $R$ be the rectangle that infects the origin in the random rectangles process. If $\phi(R) \ge 2B/p$, then, by Lemma~\ref{le:AL:RRP}, there exists a rectangle $R'$ such that $I_t^*(R')$ holds and $B/p \le \phi(R') \le 2B/p$. By Lemma~\ref{lem:Bpbound}, and recalling that $p = \frac{\lambda - \delta}{\log n}$, it follows that  
$$\Pr\big( I_t(R') \big) \, \le \, \exp\bigg( - \frac{2\lambda - \delta}{p} \bigg) \, \le \, \frac{1}{n^{2+\delta}},$$
so with high probability there does not exist such a rectangle, and we are done. 

So assume that $\phi(R) = m \le 2B/p$. 
If $m \le \log\log n$ then there must be a nucleation within this distance of the origin by time $t$, and with high probability this does not occur. Next, if $\log\log n \le m \le Z/p$, then
\[
\Pr\big( I_t(R) \big) \le \Pr\big( I^*_t(R) \big) \le e^{-Bm} \ll p^2
\]
by Lemma~\ref{lem:seeds}, so we are done in this case also by taking the union bound over the $O(p^{-2})$ choices of $R$. Finally, if $Z/p \le m \le 2B/p$, then by Lemma~\ref{le:AL:RRP} there exists a rectangle $R'$ such that $I_t(R')$ holds and $Z/p \le \phi(R) \le 2Z/p$, in which case we are again done by Lemma~\ref{lem:seeds}. This completes the proof of Proposition~\ref{prop:a:lower}.
\end{proof}

\section{The lower bound in the accelerating regime}\label{sec:lowerAC}

In the section we will prove the following proposition, which provides the lower bound of Theorem~\ref{thm:NG2d} in regime~$(b)$; that is, the regime in which the acceleration phase of growth dominates. 

\begin{proposition}\label{prop:b:lower}
Let $\delta > 0$, and suppose that $k \gg \log n$. Then
\begin{equation}\label{eq:b:lower}
\tau \geq \left(\frac{1}{4} - \delta\right) \frac{n}{k} \log \bigg( \frac{k}{\log n} \bigg)
\end{equation}
with high probability.
\end{proposition}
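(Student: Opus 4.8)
The plan is to prove that, with high probability, the origin is not infected by time
$$t \, := \, \Big( \tfrac14 - \delta \Big) \frac nk \log\Big( \frac k{\log n} \Big)$$
in the random rectangles process of Definition~\ref{def:rrp} run for time $t$. Since that process dominates the Dehghanpour--Schonmann process, and since $t = o(n)$, this suffices by Lemma~\ref{lem:Sn}. As the event $I_t(R)$ is neither increasing nor local, I would pass at once to the weaker event $I^*_t(R)$ (recall $I_t(R) \Rightarrow I^*_t(R)$), and exploit the fact, used throughout Section~\ref{sec:AR}, that $2$-neighbour infections do not increase the semi-perimeter of a droplet, so that Observation~\ref{obs:basiccoupling:lower} and Lemma~\ref{lem:acc:lower} apply to the modified process underlying $I^*_t$.

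So suppose a rectangle $R_0 \ni \0$ appears in the random rectangles process by time $t$; set $m_1 := \big\lceil \sqrt{n/t} \big\rceil$ and $m_2 := \big\lceil \sqrt{k\log n} \big\rceil$, and fix a small constant $\eps = \eps(\delta) > 0$. Applying the Aizenman--Lebowitz-type Lemma~\ref{le:AL:RRP} inside $R_0$ (with $\ell = m_2$ when $\phi(R_0) \ge m_2$, with $\ell = m_1$ when $m_1 \le \phi(R_0) < m_2$, and — when $\phi(R_0) < m_1$ — reading off a nucleation of $R_0$ or simply taking $R_0$ itself) one obtains that at least one of the following holds: \emph{(i)} there is a nucleation within distance $(k/\log n)^{1/2-\eps}$ of the origin; \emph{(ii)} there is, within distance $m_2$ of the origin, a rectangle $R$ with $(k/\log n)^{1/2-\eps} \le \phi(R) \le 2m_1$ that is internally filled by time $t$; \emph{(iii)} there is, somewhere in $S(n)$, a rectangle $R$ with $m_2 \le \phi(R) \le 2m_2$ that is internally filled by time $t$.

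Each possibility is then shown to be unlikely by Markov's inequality. Possibility~\emph{(i)} is immediate: the expected number of nucleations within distance $(k/\log n)^{1/2-\eps}$ of the origin by time $t$ is $O\big( (k/\log n)^{1-2\eps}\,(t/n) \big) = o(1)$, since $t/n \asymp \tfrac1k\log(k/\log n)$. For~\emph{(ii)} and~\emph{(iii)}, for a fixed rectangle $R$ of semi-perimeter $m$ I would use the two-step bound
$$\Pr\big( I^*_t(R) \big) \, \le \, \sum_{\ell \ge \ell_0} P_t(R,\ell) \, + \, \max_{|A| \le \ell_0} \Pr\big( T^*_m(A) \le t \big),$$
with $\ell_0$ of order $(m^2/k)\log(k/\log n)$ (suitably inflated, and small enough that $2\ell_0 \le e^{-1/\delta}m$). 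The first term is controlled by the counting Lemma~\ref{lem:countingnucleations}. For the second, the point at which the constant $\tfrac14$ appears, and is sharp, is that growing $\ell_0$ nucleations to semi-perimeter $m$ takes time $(1-o(1))\tfrac{n}{2k}\log(m/\ell_0)$, and for $m$ of order $\sqrt{n/t}$ or $\sqrt{k\log n}$ — so that $\log m \approx \tfrac12\log(k/\log n)$ — this exceeds $t$ by a factor bounded away from $1$; Lemma~\ref{lem:acc:lower} then gives $\Pr\big( T^*_m(A) \le t \big) \le \exp\big( -\delta^2\max\{m^{\delta/2},\ell_0\} \big)$. In case~\emph{(ii)} this is super-polynomially small in $k$, which beats the $\mathrm{poly}(k)$ choices of a rectangle of this size within distance $m_2$ of the origin; in case~\emph{(iii)} it is super-polynomially small in $n$ (using $\log(k/\log n) \to \infty$), which beats the $\mathrm{poly}(n)$ choices of a rectangle of this size in $S(n)$. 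Summing over the three cases yields $\Pr(\0 \text{ infected by time } t) = o(1)$.

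I expect the main obstacle to be the quantitative accounting in~\emph{(ii)} and~\emph{(iii)}: the scales $m_1, m_2$, the exponent $\eps$, and the cut-off $\ell_0$ must be tuned so that, simultaneously, the growth-time lower bound of Lemma~\ref{lem:acc:lower} exceeds $t$ by a factor bounded away from $1$ \emph{and} the resulting tail bound dominates the number of eligible rectangles. This is genuinely delicate near the lower end of the regime: when $k$ only barely exceeds $\log n$ the relevant droplet scales are merely polylogarithmic, so the estimates of Lemmas~\ref{lem:countingnucleations} and~\ref{lem:acc:lower} have to be combined with care, and for the very smallest rectangles near the origin Lemma~\ref{lem:acc:lower} is too weak and must be supplemented by the cruder observation that an internally filled rectangle must contain at least a constant fraction of its semi-perimeter many nucleations.
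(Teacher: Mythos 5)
Your overall architecture is the paper's: pass from $I_t$ to $I^*_t$, use the random rectangles process and the Aizenman--Lebowitz Lemma~\ref{le:AL:RRP} to split into the three cases (no nucleation near the origin; a medium-size internally filled rectangle near the origin; a large internally filled rectangle somewhere in $S(n)$), and then control each rectangle by first bounding the number of nucleations it contains (Lemma~\ref{lem:countingnucleations}) and then applying the growth lower bound (Lemma~\ref{lem:acc:lower}). This is exactly the paper's strategy, and your case \emph{(iii)} with $\phi(R')\in[m_2,2m_2]$ and $\ell_0\sim(m^2/k)\log(k/\log n)$ matches the paper word for word.

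The gap is in the choice of the lower cutoff scale in cases \emph{(i)}--\emph{(ii)}. You take a \emph{constant} $\eps=\eps(\delta)$ and cutoff $(k/\log n)^{1/2-\eps}$, and in the middle range you apply Lemma~\ref{le:AL:RRP} with $\ell=m_1\asymp\sqrt{n/t}$. Both choices break down concretely. First, when $k$ is only slightly above $\log n$ (say $k=\log n\cdot\log\log n$) your cutoff is $(\log\log n)^{1/2-\eps}$, while the nucleation count one must insert into Lemma~\ref{lem:acc:lower} is $\ell=\log k\approx\log\log n$; thus $\ell\gg m$ and the hypothesis $\ell\le e^{-1/\delta}m$ of Lemma~\ref{lem:acc:lower} fails, so the lemma is simply inapplicable on a nontrivial subrange. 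Second, $\sqrt{n/t}=(1/4-\delta)^{-1/2}\sqrt{k/\log(k/\log n)}>2\sqrt{k/\log(k/\log n)}$, so applying AL with $\ell=m_1$ produces rectangles of semi-perimeter up to $2m_1>\sqrt{k/\log(k/\log n)}$, which falls outside the range $m\le\sqrt{k/\log(k/\log n)}$ required by the second clause of Lemma~\ref{lem:countingnucleations}. Third, your $\ell_0\sim(m^2/k)\log(k/\log n)$ is $O(1)$ at scale $m\approx m_1$, which is far too small to make the counting bound of Lemma~\ref{lem:countingnucleations} bite in case \emph{(ii)} — the correct choice there is $\ell=\log k$, as used in case \emph{(iii)} the first clause is what is used, but in case \emph{(ii)} the second. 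You do flag ``the very smallest rectangles near the origin'' as problematic and hint at a cruder nucleation-count argument, but you do not give a fix. The paper's resolution is to take $\eps=\eps(n)\to 0$ \emph{slowly} (not a fixed constant) and to apply AL in case \emph{(ii)} with $\ell=\eps\sqrt{k/\log(k/\log n)}$: then the AL rectangle has semi-perimeter in $[\eps\sqrt{k/\log(k/\log n)},\,2\eps\sqrt{k/\log(k/\log n)}]$, which simultaneously is $\ll\sqrt{k/\log(k/\log n)}$ (so Lemma~\ref{lem:countingnucleations} applies with $\ell=\log k$) and $\gg\log k$ (so Lemma~\ref{lem:acc:lower} applies with $\ell=\log k$), while $\eps\to 0$ is enough for the nucleation-free annulus in case \emph{(i)}. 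With that substitution your argument becomes the paper's.
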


The main tools in the proof of Proposition~\ref{prop:b:lower} will be Lemma~\ref{lem:acc:lower}, which gives an upper bound on the typical growth of a droplet in the (modified) Kesten--Schonmann process  and the random rectangles process (see Definition~\ref{def:rrp}), applied with $t = \big(\frac{1}{4} - \delta \big) \frac{n}{k} \log \big( \frac{k}{\log n} \big)$. Before giving the details, let us sketch the basic strategy. Let $R \subset S(n)$ be the first rectangle that occurs in the random rectangles process and contains the origin. We will divide into three cases depending on whether the semi-perimeter $m = \phi(R)$ is bigger than $\sqrt{k\log n}$, much smaller than $\sqrt{n/t}$, or somewhere in between. 

The easiest case is when $m \ll \sqrt{n/t}$, since with high probability there are no nucleations within distance $m$ of the origin by time $t$. The other two cases are harder, but similar to one another. First, if $m \ge \sqrt{k \log n}$ then we will apply Lemma~\ref{le:AL:RRP} to find a rectangle $R'$ with semi-perimeter $m \approx \sqrt{k\log n}$ that appears in the random rectangles process. If $R'$ contains more than $m^2 (\log k) / k$ nucleations, then we will bound the probability using Lemma~\ref{lem:countingnucleations}; if not then we will apply Lemma~\ref{lem:acc:lower}. In either case, the bounds obtained will be super-polynomial in $n$, so we can bound the probability that such a rectangle $R$ exists using Markov's inequality. 

On the other hand, if 
$$\sqrt{ \frac{n}{t} } \, \approx \, \sqrt{ \frac{k}{\log \big( \frac{k}{\log n} \big) } } \, \lesssim \, m \, \le \, \sqrt{k\log n}$$
then we will apply Lemma~\ref{le:AL:RRP} to find a $t$-spanned rectangle $R'$ \emph{within distance $\sqrt{k\log n}$ of the origin} with semi-perimeter roughly $\sqrt{k / \log \big( \frac{k}{\log n} \big)}$. If $R'$ contains more than $\log k$ nucleations, then we will bound the probability using Lemma~\ref{lem:countingnucleations}; if not then we will apply Lemma~\ref{lem:acc:lower}. Crucially, the number of such rectangles is only polynomial in $k$, so our bounds on the probability, which are polynomially small in $k$, are sufficient for an application of Markov's inequality.

\begin{proof}[Proof of Proposition~\ref{prop:b:lower}]
Let $R$ be the first rectangle that occurs in the random rectangles process (with $t = \big(\frac{1}{4} - \delta \big) \frac{n}{k} \log \big( \frac{k}{\log n} \big)$) and contains the origin, as in the sketch above, and let $\eps = \eps(n)$ be a function that tends to zero sufficiently slowly. Since with high probability there are no nucleations within distance $\eps \sqrt{n/t}$ of the origin by time $t$, we may assume that 
$$\phi(R) \,\ge\, \eps \sqrt{ \frac{n}{t} } \,\ge\, 2\eps \sqrt{\frac{k}{\log \big( \frac{k}{\log n} \big)}}.$$

Suppose first that $\phi(R) \ge 2\sqrt{k \log n}$. In this case we may apply Lemma~\ref{le:AL:RRP} to find a rectangle $R' \subset S(n)$ with semi-perimeter 
$$\sqrt{k \log n} \, \le \, m \, \le \, 2\sqrt{k \log n}$$
that appears in the random rectangles process. Now, setting $\ell = (m^2/k) \log \big( \frac{k}{\log n} \big)$, we have
$$P_t(R',j) \le n^{-\log (\frac{k}{\log n})}$$
for any $j\ge\ell$ by Lemma~\ref{lem:countingnucleations}, and so (since $k \gg \log n$ and the number of choices of $j\ge\ell$ and the number of rectangles in $S(n)$ are only polynomial in $n$) we may assume that $R'$ contains at most $\ell$ nucleations by time $t$. As in Definition~\ref{def:rrp}, let $A$ be the set of nucleations that occur in $R$ by time $t$, so $|A|\le\ell$. Note that
$$\ell \, = \, \frac{m^2 \log \big( \frac{k}{\log n} \big)}{k} \, \le \, 4\log\bigg( \frac{k}{\log n} \bigg) \log n \, \ll \, m,$$ 
where in the final step we used our assumption that $k \gg \log n$, and it follows by Lemma~\ref{lem:acc:lower} that 
\begin{equation}\label{eq:lem:acc:lower:app}
\P\bigg( T^*_m(R' \cap A) \leq (1-\delta)\frac{n}{2k}\log\frac{m}{\ell} \bigg) \, \leq \, \exp\Big(- \delta^2 \max\big\{ m^{\delta/2}, \ell \big\} \Big).
\end{equation}
Now, since $m/\ell = \big( k / \log n \big)^{1/2 + o(1)}$, it follows that
$$(1-\delta)\frac{n}{2k}\log\frac{m}{\ell} \, \ge \, (1-2\delta)\frac{n}{4k}\log \bigg( \frac{k}{\log n} \bigg) \, > \, t,$$ 
and hence the probability that $R'$ appears in the random rectangles process by time $t$, given that $A$ is the set of nucleations in $R'$ and $|A|\le\ell$, is at most
\[
\exp\Big(- \delta^2 \max\big\{ m^{\delta/2}, \ell \big\} \Big) \, \le \, n^{- \delta^2 \log (\frac{k}{\log n})},
\]
where the inequality follows since
\[
\max\big\{ m^{\delta/2}, \ell \big\} \, \ge \, \ell \, \ge \, \log n \cdot \log\left(\frac{k}{\log n}\right).
\]
This probability bound is again super-polynomial in $n$, and it is uniform in $A$, so by Markov's inequality this completes the proof in the case $\phi(R) \ge 2\sqrt{k \log n}$.

Finally, suppose that $\eps \sqrt{n/t} \le \phi(R) \le 2\sqrt{k \log n}$. By Lemma~\ref{le:AL:RRP} there exists a rectangle $R' \subset R$ within distance $2\sqrt{k \log n}$ of the origin, with semi-perimeter 
$$\eps \sqrt{\frac{k}{\log \big( \frac{k}{\log n} \big)}} \, \le \, m \, \le \, 2\eps \sqrt{\frac{k}{\log \big( \frac{k}{\log n} \big)}},$$
that appears in the random rectangles process. Setting $\ell = \log k$, we have
$$P_t(R',j) \le k^{-\log\log k}$$
for any $j \ge \ell$, by Lemma~\ref{lem:countingnucleations}. Thus, since the number of rectangles of semi-perimeter at most $k$ within distance $2\sqrt{k \log n}$ of the origin is only polynomial in $k$, and so is the number of choices of $j\ge\ell$, we may assume that $R'$ contains at most $\ell$ nucleations by time $t$. But $\ell = \log k \ll m$ and hence, by Lemma~\ref{lem:acc:lower}, we once again have~\eqref{eq:lem:acc:lower:app}. Since again $m/\ell \ge \big( k / \log n \big)^{1/2 + o(1)}$, it follows as before that the probability that $R'$ appears in the random rectangles process by time $t$, given that $A$ is the set of nucleations in $R'$ and $|A|\le\ell$, is at most 
$$\exp\Big(- \delta^2 \max\big\{ m^{\delta/2}, \ell \big\} \Big) \, = \, \exp\big( - k^{\Omega(\delta)} \big),$$
which is super-polynomial in $k$ (and uniform in $A$). This completes the proof of the proposition.
\end{proof}

\section{The lower bound at terminal velocity}\label{sec:lowerTV}

In this section we will complete the proof of Theorem~\ref{thm:NG2d} by proving the following lower bound in regime~$(c)$.

\begin{proposition}\label{prop:c:lower}
There exists $c > 0$ such that if $\sqrt{n}(\log n)^2 \ll k \ll n$, then
\begin{equation}\label{eq:prop:c:lower}
\tau \geq c \cdot \left(\frac{n^2}{k\log (n/k)}\right)^{1/3}
\end{equation}
with high probability.
\end{proposition}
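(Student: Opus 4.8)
The plan is to mirror the proof of Proposition~\ref{prop:b:lower}, replacing the accelerating‑regime machinery by its terminal‑velocity analogues from Section~\ref{sec:TV}. By Lemma~\ref{lem:Sn} it suffices to work inside $S(n)$. Fix a sufficiently small constant $c>0$ and set
\[
t = c\left(\frac{n^2}{k\log (n/k)}\right)^{1/3}, \qquad m = \big(kn\log(n/k)\big)^{1/6}, \qquad M = m\sqrt{\log(n/k)},
\]
the parameters of Lemma~\ref{lem:TVnucleations}; note that $m\asymp\sqrt{n/t}$ and $t\sqrt{k/n}\asymp M/\log(n/k)$. It is enough to show that the origin is infected by time $t$ with probability $o(1)$.

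First I would introduce a \emph{generous rectangles process}, run in rounds of length $t_0=n^{3/4}/k^{1/2}$: at each round every rectangle in the current collection is replaced by the rectangle obtained by enlarging each side by $200n^{1/4}$, newly occurring nucleations are inserted as singletons, and rectangles at distance at most $2$ are merged, exactly as in the standard rectangles process. By Lemma~\ref{lem:tv:lower} each enlargement dominates the true growth of that rectangle over one round except with probability $n^{-3}$, and since only $o(n^3)$ (round, rectangle) pairs occur before time $t$, a union bound shows that with high probability the generous process dominates the Kesten--Schonmann process on $S(n)$ up to time $t$ (Lemma~\ref{lem:TV:coupled}). Since growth in this process is deterministic, a rectangle built from $\ell$ nucleations in $r\le t/t_0$ rounds has semi‑perimeter $O\big(\ell r n^{1/4}\big)=O\big(\ell\,t\sqrt{k/n}\big)=O\big(\ell M/\log(n/k)\big)$; this ``budget'' bound is the terminal‑velocity substitute for Observation~\ref{obs:basiccoupling:lower} and Lemma~\ref{lem:acc:lower}. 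As the maximum semi‑perimeter at most triples at a merge and grows by $O(n^{1/4})$ at a growth step, the process obeys an Aizenman--Lebowitz‑type lemma (Lemma~\ref{le:AL:TV}): if $R$ appears in it, then for every $m\le\ell\le\phi(R)$ some $R'\subset R$ with $\ell\le\phi(R')\le 3\ell$ also appears.

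Now let $\eps=\eps(n)\to 0$ slowly and let $R$ be the first rectangle containing the origin to appear in the generous process; I would split on $\phi(R)$ as in Proposition~\ref{prop:b:lower}. If $\phi(R)\le\eps m$ then, since $0\in R$ and $R$ has diameter at most $\phi(R)$, some nucleation lies within distance $\eps m$ of the origin, an event of probability $O\big(\eps^2 m^2\cdot t/n\big)=O(\eps^2)\to 0$. If $\eps m\le\phi(R)\le M$ then $R\subset S(M)$, and Lemma~\ref{le:AL:TV} produces $R'\subset R$ with $\phi(R')$ of order $m$ (hence contained in one of the $O((M/m)^2)=O(\log(n/k))$ translates of $S(2m)$ from a fixed grid that meet $S(M)$) which, by the budget bound, contains at least $\eps\sqrt{\log(n/k)}\ge\log(n/k)^{1/3}$ nucleations, so Lemma~\ref{lem:TVnucleations} applies. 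If $\phi(R)\ge M$ then Lemma~\ref{le:AL:TV} produces $R'\subset R\subset S(n)$ with $\phi(R')$ of order $M$, which by the budget bound contains at least $\log(n/k)$ nucleations, again controlled by Lemma~\ref{lem:TVnucleations}. In each case one finishes with a union bound and Markov's inequality.

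The crux, and the source of the ``non‑trivial technical differences'', is that the bounds furnished by Lemma~\ref{lem:TVnucleations} are only of the form $(\log(n/k))^{-4}$ and $(k/n)^4$, which is \emph{not} super‑polynomially small in $n$; so when $k$ is close to $n$ (and $n/k$ grows slowly) one cannot afford to sum these over all translates of $S(M)$ in $S(n)$ — a scale‑by‑scale sum incurs a spurious factor of $m$. The remedy is to count \emph{nucleations} rather than scales of semi‑perimeter: the generous process can only carry a droplet to the origin if it contains a chain of nucleations whose consecutive members lie within $O(M/\log(n/k))$ of one another and which reaches the origin, and each additional link of such a chain costs a factor $O\big((M/\log(n/k))^2\cdot t/n\big)=O(1/\log(n/k))$. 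Summing the resulting geometric series in the number of links $\ell$ — which is always at least the distance travelled divided by $O(M/\log(n/k))$, hence at least $1$ when $\phi(R)\le\eps m$, at least $\Omega(\sqrt{\log(n/k)})$ when $\phi(R)\asymp m$, and at least $\Omega(\log(n/k))$ when $\phi(R)\ge M$ — gives a bound tending to $0$ uniformly in $k$. Threading this chain‑counting estimate through the sub‑rectangle hierarchy produced by Lemma~\ref{le:AL:TV}, while keeping every constant compatible with the choice $t=c(\cdot)^{1/3}$, is the part that requires real care.
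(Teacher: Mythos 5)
Your overall architecture matches the paper's: a generous rectangles process run in rounds of length $n^{3/4}/k^{1/2}$, dominated via Lemma~\ref{lem:tv:lower}, an Aizenman--Lebowitz-type lemma, the observation that each nucleation can contribute only $O(n^{1/4}+t\sqrt{k/n}) = O\bigl(M/\log(n/k)\bigr)$ to the semi-perimeter, and a three-way case split on $\phi(R)$ finished by Lemma~\ref{lem:TVnucleations}. The budget/contribution bound and the small and middle cases are essentially the paper's.

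The genuine gap is in your opening reduction and your handling of the large case. You invoke Lemma~\ref{lem:Sn} only to restrict to $S(n)$, and then correctly notice that summing the $(k/n)^4$ bound from Lemma~\ref{lem:TVnucleations} over $(n/M)^2$ translates of $S(M)$ in $S(n)$ does not give $o(1)$ when $k$ is close to $n$. But this problem is an artefact of an unnecessarily weak application of Lemma~\ref{lem:Sn}. The paper instead fixes a function $r=r(n)$ with $r/t\to\infty$ slowly and applies Lemma~\ref{lem:Sn} (with $r$ in place of $t$) to restrict the entire analysis to $S(r)$; since $r\approx t$, one gets $r/M\le\sqrt{n/k}$ and hence only $(r/M)^2\lesssim n/k$ translates to union over, so that $(n/k)\cdot(k/n)^4 = (k/n)^3 = o(1)$ and the bound closes directly. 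Your proposed remedy --- counting chains of nucleations link by link and summing a geometric series --- is not carried out (you yourself flag that ``threading this'' through the hierarchy ``requires real care''), and in any case it replaces a one-line observation with a substantially more delicate argument. With the restriction to $S(r)$ rather than $S(n)$, the chain-counting machinery is unnecessary and your Cases 1--3 go through essentially as you describe.
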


The broad structure of the proof here is the same as in the previous section, but because the time for two-neighbour infections is important in this (the terminal velocity) regime, we cannot take instantaneous bootstrap closures. As a result, the coupling we will use to obtain an Aizenman--Lebowitz-type lemma is somewhat more complicated. Fortunately, since we are only aiming to determine $\tau$ up to a constant factor, we can afford to be rather generous. 

Let us fix $t = c \cdot \big(\frac{n^2}{k\log(n/k)} \big)^{1/3}$ throughout this section, and also a function $r = r(n)$ such that $r/t \to \infty$ sufficiently slowly. Note that, by Lemma~\ref{lem:Sn}, we may assume that no infections occur outside the square $S(r)$. 

\begin{defn}[The generous rectangles process]
Let $A$ be the set of nucleations that occur in $S(r)$ by time $t$, and define an initial collection of rectangles 
$$\big\{ x + S\big( 100 n^{1/4} \big) : x \in A \big\}$$ 
by placing a copy of $S\big(100 n^{1/4} \big)$ on each nucleation. 

Now repeat the following steps $\lceil tk^{1/2}/n^{3/4} \rceil$ times:
\begin{enumerate}
\item If there exist two rectangles that are within $\ell_1$-distance $500n^{1/4}$ of each other, then choose such a pair and replace them by the smallest rectangle containing both. Continue iterating this step until all pairs of rectangles are $\ell_1$-distance at least $500n^{1/4}$ apart.
\item Enlarge each side of each rectangle\footnote{As before, the centre of each rectangle remains the same, and the length of each side of each rectangle increases by $200 n^{1/4}$.} by distance $200n^{1/4}$.
\end{enumerate}
We say that a rectangle is \emph{generously spanned} if it appeared at some point in this process. 
\end{defn}

The following Aizenman--Lebowitz-type lemma follows immediately from the definition above. 

\begin{lemma}\label{le:AL:TV}
If $R$ is a generously spanned rectangle, then for every $1\leq \ell \leq \phi(R)$, there exists a generously spanned rectangle $R' \subset R$ such that 
$$\ell \, \le \, \phi(R') \, \le \, 2\ell + 500n^{1/4}.$$
\end{lemma}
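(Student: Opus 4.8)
The plan is to prove Lemma~\ref{le:AL:TV} by a direct induction on the number of merge-and-enlarge rounds in the generous rectangles process, tracking how the maximum semi-perimeter of a rectangle in the current collection can grow from one round to the next. The key structural observation is that step~(1) of a single round replaces a pair of rectangles $R_1,R_2$ that are within $\ell_1$-distance $500n^{1/4}$ of one another by the smallest rectangle $R$ containing both; since the two rectangles are close, $\phi(R) \le \phi(R_1) + \phi(R_2) + 500n^{1/4}$, and iterating this merging within a single round, a rectangle's semi-perimeter can at most double plus pick up an additive $O(n^{1/4})$ term (relative to the largest semi-perimeter present at the start of the round). Step~(2) adds a further $+400n^{1/4}$ (two sides enlarged by $200n^{1/4}$ each), which is absorbed into the same additive term. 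So if $D_j$ denotes the maximum semi-perimeter among rectangles present just after round $j$, we have a recursion of the shape $D_{j+1} \le 2D_j + O(n^{1/4})$, while $D_0 = \phi(S(100n^{1/4})) = 200n^{1/4}$.

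First I would fix a generously spanned rectangle $R$ and a target $\ell$ with $1 \le \ell \le \phi(R)$, and consider the history of $R$: $R$ appeared at the end of some round, having been assembled from the initial seed squares via a sequence of merges and enlargements. I would look at the \emph{binary merge tree} recording how $R$ was built: leaves are the initial $S(100n^{1/4})$ squares, internal nodes are the intermediate rectangles formed at step~(1) of some round, and each node also has the step~(2) enlargements applied. Every node of this tree is a generously spanned rectangle contained in $R$. Walking down from $R$ toward the leaves, at each edge the semi-perimeter drops by at most a factor of $2$ plus an additive $O(n^{1/4})$ (by the merging bound above, together with the enlargement bound), and the leaves have semi-perimeter $200n^{1/4}$. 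Hence as we descend we pass through generously spanned rectangles whose semi-perimeters, going from $\phi(R)$ down to $200n^{1/4}$, cannot jump down by more than a factor $2$ with an additive slack of $O(n^{1/4})$ at each step. Choosing the first rectangle $R'$ along some root-to-leaf path with $\phi(R') \ge \ell$, its parent (or the previous rectangle on the path) has semi-perimeter $< \ell$, so $\phi(R') \le 2\ell + 500n^{1/4}$ after accounting for the merge-plus-enlarge slack; and if $\ell \le 200n^{1/4}$ we may just take $R'$ to be a leaf seed square, which has $\phi = 200n^{1/4} \le 2\ell + 500n^{1/4}$ trivially.

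The one point that needs a little care is bounding how much a semi-perimeter can shrink across a \emph{single} round rather than a single merge, since within one round step~(1) is iterated several times before step~(2) is applied. But this is fine: if $R$ is formed from $R_1$ and $R_2$ in one application of step~(1) and $\phi(R) \ge \ell$ while $\phi(R_1),\phi(R_2) < \ell$, then $\phi(R) \le \phi(R_1)+\phi(R_2)+500n^{1/4} < 2\ell + 500n^{1/4}$, and step~(2) only adds $400n^{1/4} \le 500n^{1/4}$; so it suffices to track the tree node-by-node and never needs the round structure at all. The main (and really only) obstacle is simply getting the constant in the additive term right — making sure that the $500n^{1/4}$ in the merging distance, the $200n^{1/4}$ per-side enlargement applied twice, and the $200n^{1/4}$ seed radius combine to no more than the claimed $500n^{1/4}$ slack in the conclusion; this is a bookkeeping check, and if one is slightly generous the stated bound $\ell \le \phi(R') \le 2\ell + 500n^{1/4}$ holds comfortably (indeed the ``$500$'' is chosen precisely so this works, and in any case the proposition only needs the bound up to constants). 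Since every rectangle appearing in the merge tree of $R$ is by construction generously spanned and contained in $R$, the chosen $R'$ has all the required properties, completing the proof.
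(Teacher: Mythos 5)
Your proof is correct and is the natural elaboration of the argument the paper leaves implicit (the paper merely asserts that the lemma ``follows immediately from the definition''): you take a minimal node of the merge tree with $\phi \ge \ell$ and observe it has $\phi \le 2\ell + 500n^{1/4}$, using the merge bound $\phi(R) \le \phi(R_1) + \phi(R_2) + 500n^{1/4}$, the enlargement bound $+400n^{1/4}$, and the seed bound. Two small slips that do not affect the conclusion: $\phi(S(100n^{1/4})) = 400n^{1/4} + 2$ (since $S(m)$ has side-length $2m+1$), not $200n^{1/4}$ --- harmless because $400 < 500$ --- and the ``first rectangle $R'$ along a root-to-leaf path'' should be read as ``a node all of whose children have $\phi < \ell$'' (otherwise at a merge node only one child is controlled), which is what your final paragraph actually uses.
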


The following key lemma was essentially proved in Section~\ref{sec:TV}.

\begin{lemma}\label{lem:TV:coupled}
Let $B$ denote the set of sites infected by time $t$ in the nucleation and growth process run on $S(r)$. Then, with high probability, $B$ is contained in the union of the final set of rectangles in the generous rectangles process.
\end{lemma}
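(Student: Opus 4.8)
The plan is to show that the generous rectangles process dominates the nucleation and growth process by maintaining, as an invariant throughout the process, the statement that the set $B$ of currently infected sites is contained in the union of the current collection of rectangles. Since each nucleation is born inside a copy of $S(100n^{1/4})$ from the very start, the only thing that can go wrong is that the genuine growth of the process (via $1$- and $2$-neighbour infections) escapes the rectangle that is supposed to contain it before that rectangle gets enlarged. So the first step is to run the generous rectangles process and the true process in parallel on a common time axis, associating the $j$-th iteration of the generous process (for $1 \le j \le \lceil tk^{1/2}/n^{3/4} \rceil$) with the time interval $[(j-1)n^{3/4}/k^{1/2}, jn^{3/4}/k^{1/2}]$, so that step~2 of the $j$-th iteration corresponds to allowing the true process to run for a further period of length $n^{3/4}/k^{1/2}$.

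The heart of the argument is a union bound over the (at most $|A| \le r^2$ many) rectangles in the collection at any given time, and over the $\lceil tk^{1/2}/n^{3/4}\rceil$ iterations, of the event that the true infection spreads out of a given rectangle $R$ by more than $200n^{1/4}$ on some side within a time interval of length $n^{3/4}/k^{1/2}$ --- this is exactly the event bounded by Lemma~\ref{lem:tv:lower}, which gives probability at most $n^{-3}$ per rectangle per step. Here one uses that, as long as the invariant holds, the true infected set restricted to the vicinity of $R$ is contained in $[R]_{n^{3/4}/k^{1/2}}$ after running for one time step (since infections outside $R$ are, inductively, confined to other rectangles that are $\ell_1$-distance at least $500n^{1/4}$ away, hence too far to help $R$ grow within one step, by the same path-counting estimate underlying Lemma~\ref{lem:Xstar:countingpaths}). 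Whenever two true droplets that lie in distinct rectangles $R_i, R_j$ merge, those rectangles are within $\ell_1$-distance $500n^{1/4}$, so step~1 of the generous process already replaced them by a single enclosing rectangle, and the invariant is preserved through the merge. Multiplying the per-step failure probability $n^{-3}$ by the number of rectangles $O(r^2)$ and the number of steps $O(tk^{1/2}/n^{3/4})$, and recalling that $r/t$ and hence $r$ grows only slowly (polylogarithmically times $t$), while $t = O(n)$, the total failure probability is $o(1)$; thus with high probability the invariant survives all $\lceil tk^{1/2}/n^{3/4}\rceil$ iterations, which is precisely the assertion that $B$ is contained in the union of the final collection of rectangles.

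The main obstacle I anticipate is bookkeeping the coupling cleanly at the moments when rectangles merge: one must check that the enlargement-by-$200n^{1/4}$ buffer is genuinely enough that a true droplet contained in $R$ at the start of an iteration cannot, within time $n^{3/4}/k^{1/2}$, either leave the enlarged $R'$ \emph{or} reach a droplet sitting in a rectangle that is $\ell_1$-distance $500n^{1/4}$ away. The first is Lemma~\ref{lem:tv:lower} directly; the second follows because crossing the gap of order $n^{1/4}$ between two such rectangles is itself an event of the type controlled by Lemma~\ref{lem:Xstar:countingpaths} (with $m = \Theta(n^{1/4})$), again of probability $\le n^{-3}$, so it can be folded into the same union bound. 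A secondary point to be careful about is that step~1 may have to be iterated several times before all rectangles are pairwise $\ell_1$-far, but this terminates (the number of rectangles strictly decreases at each merge) and does not affect the time axis, since step~1 is instantaneous in the coupling. Once these checks are in place, the proof is a routine Borel--Cantelli-style union bound.
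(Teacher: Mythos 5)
Your proposal follows the same approach as the paper's proof: discretize time into intervals of length $n^{3/4}/k^{1/2}$ matched to the iterations of the generous process, consider the first step at which containment fails, reduce this to a single rectangle growing too fast within one interval, bound that by Lemma~\ref{lem:tv:lower}, and union-bound over iterations and rectangles. The one place you go beyond the paper is your worry that two well-separated droplets might interact across the gap and your suggested fix via another application of Lemma~\ref{lem:Xstar:countingpaths}; this extra union bound is harmless but unnecessary, since once each $[\tilde R_j]_{n^{3/4}/k^{1/2}}$ is contained in its $200n^{1/4}$-enlargement $R_j'$ (the event controlled by Lemma~\ref{lem:tv:lower}), and the merged rectangles are at $\ell_1$-distance at least $500n^{1/4}$ so that the $R_j'$ are still at distance at least $300n^{1/4} > 2$, the Kesten--Schonmann process started from the union is deterministically equal to the union of the processes started from each $\tilde R_j$ (no site can have infected neighbours originating from two distinct $R_j'$), so the interaction is ruled out without any further probabilistic estimate.
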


\begin{proof}
Consider the minimum $1 \le i \le \lceil tk^{1/2}/n^{3/4} \rceil$ for which the following holds: there exists a site that is infected by time $i \cdot n^{3/4} / k^{1/2}$ in the nucleation and growth process, but is not contained in the union of the set of rectangles after $i$ steps of the generous rectangles process. This implies that one of the rectangles at the previous step grew in some direction by more than $100 n^{1/4}$ within time $n^{3/4} / k^{1/2}$, and by Lemma~\ref{lem:tv:lower} this event has probability at most $n^{-3}$ for a fixed rectangle. Since the number of choices for $i$ and the rectangle is at most $r^4 \cdot t k^{1/2}/n^{3/4} \ll n^3$ (by our choices of $t$, $k$ and $r$), the lemma follows by Markov's inequality. 
\end{proof}

We are now ready to prove the proposition. 

\begin{proof}[Proof of Proposition~\ref{prop:c:lower}]  
If $\tau \le t$ then, by Lemma~\ref{lem:TV:coupled}, we may assume that there exists a generously spanned rectangle $R$ containing the origin. Let $\eps = \eps(n)$ be a function which tends to zero sufficiently slowly, and observe that, with high probability, there are no nucleations within distance $\eps \sqrt{n/t}$ of the origin by time $t$. We may therefore moreover assume that 
$$\phi(R) \,\ge\, \eps \sqrt{ \frac{n}{t} } \, \ge \, \eps \big( k n \log (n/k) \big)^{1/6}.$$

Suppose first that $\phi(R) \ge M = (kn)^{1/6} \big( \log (n/k) \big)^{2/3}$. In this case we may apply Lemma~\ref{le:AL:TV} to find a generously spanned rectangle $R' \subset S(r)$ with semi-perimeter 
$$M/8 \, \le \, \phi(R') \, \le \, M/4  + 500n^{1/4} \, \le \, M/2.$$
We claim that $R'$ contains at least $\log (n/k)$ nucleations. To see this, observe that, since $k \gg \sqrt{n} (\log n)^2$ and $c$ is sufficiently small, each nucleation contributes at most 
\begin{equation}\label{eq:nuccontribution}
O\big( n^{1/4} + t \sqrt{ k/n } \big) \, \le \, \frac{(kn)^{1/6}}{8 (\log(n/k))^{1/3}} \, = \, \frac{M}{8\log(n/k)}
\end{equation}
to the semi-perimeter of a generously spanned rectangle. Thus, if $R'$ is generously spanned, then for some $(i,j) \in \Z^2$ with $-r/M \le i, j \le r/M$, the square $S(M) + (iM,jM)$ must contain at least $\log (n/k)$ nucleations. But $r/M \le \sqrt{n/k}$, so by Lemma~\ref{lem:TVnucleations} and the union bound, this event has probability $o(1)$, as required.

Finally suppose that 
$$\eps \big( k n \log (n/k) \big)^{1/6} \, \le \, \phi(R) \, \le \, M.$$
In this case we may apply Lemma~\ref{le:AL:TV} to find a generously spanned rectangle $R' \subset S(M)$ with semi-perimeter
\[
\eps m \le \phi(R') \le 2 \eps m + 500n^{1/4} \le 3\eps m,
\]
where $m = \big( k n \log (n/k) \big)^{1/6} \gg n^{1/4}$. Now recall from~\eqref{eq:nuccontribution} that each nucleation contributes at most
$$\frac{(kn)^{1/6}}{(\log(n/k))^{1/3}} \, = \, \frac{m}{(\log(n/k))^{1/2}} \, \ll \, \frac{\eps m}{(\log(n/k))^{1/3}}$$
to the semi-perimeter of a generously spanned rectangle, and hence $R'$ contains at least $\big( \log (n/k) \big)^{1/3}$ nucleations. Thus, if $R'$ is generously spanned, then for some $(i,j) \in \Z^2$ with $-M/m \le i, j \le M/m$, the square $S(m) + (im,jm)$ must contain at least $\big( \log (n/k) \big)^{1/3}$ nucleations. But $M/m = \big( \log (n/k) \big)^{1/2}$, so by Lemma~\ref{lem:TVnucleations} and the union bound, this event also has probability $o(1)$. Thus $\tau \ge t$ with high probability, which completes the proof of the proposition.
\end{proof} 

We are finally ready to put together the pieces and prove Theorem~\ref{thm:NG2d}. 

\begin{proof}[Proof of Theorem~\ref{thm:NG2d}]
In regime~$(a)$, the upper bound was proved in Proposition~\ref{prop:a:upper}, and the lower bound in Proposition~\ref{prop:a:lower}. In regime~$(b)$, the upper bound was proved in Proposition~\ref{prop:b:upper}, and the lower bound in Proposition~\ref{prop:b:lower}. Finally, in regime~$(c)$, the upper bound was proved in Proposition~\ref{prop:c:upper}, and the lower bound in Proposition~\ref{prop:c:lower}. 
\end{proof}

\section{Higher dimensions and more general update rules}\label{sec:problems}

\subsection{Higher dimensions}\label{sec:higher:dims}

In this paper we have shown how relatively simple techniques from bootstrap percolation can be used to prove surprisingly sharp bounds on the relaxation time of the nucleation and growth process in two dimensions. In higher dimensions (with more than two different rates) the problem is substantially more difficult, and a result corresponding to that of Dehghanpour and Schonmann~\cite{DS97a} was obtained only recently by Cerf and Manzo~\cite{CM13a}. They consider the nucleation and growth process on $\Z^d$ where vertices with $i$ already-infected neighbours are infected at rate $k_i/n$, where the functions $k_i = k_i(n)$ satisfy $1 = k_0 \le k_1 \le \cdots \le k_d = n$. The main result of~\cite{CM13a} states that, with high probability, 
$$\tau = n^{1 + o(1)} \bigg( \min\Big\{ k_1^{d+1}, \, k_2^d, \, k_1 k_3^{d-1}, \, k_1 k_2 k_4^{d-2}, \ldots, \, k_1 \cdots k_{d-2} k_{d}^2, \, k_1 \cdots k_d \Big\} \bigg)^{-1/(d+1)}.$$
We remark that the proof, which is highly non-trivial, also used techniques from bootstrap percolation (in particular, those introduced in~\cite{CC,CM}).  The techniques introduced in~\cite{CM13a} were extended by the same authors in~\cite{CM13b} to determine the relaxation time of the Ising model on $\Z^d$ at low temperatures. 

It is natural to ask whether or not the above bounds on $\tau$ can be strengthened along the lines of Theorem~\ref{thm:NG2d}. We expect, however, that the following problem will be significantly harder to resolve in general than it was in two dimensions. 

\begin{prob}\label{prob:ddim}
Determine the relaxation time of the $d$-dimensional nucleation and growth process up to a constant factor.
\end{prob}

It is possible that one could even prove a sharp threshold for the relaxation time for some ranges of $\k = (k_1,\ldots,k_{d-1})$, as in Theorem~\ref{thm:NG2d}. It would also be very interesting to do so in two dimensions in regime~$(c)$.

\begin{prob}
Determine the sharp threshold (if one exists) for the relaxation time in two dimensions when $\sqrt{n}(\log n)^{2} \ll k \ll n$. 
\end{prob}

\subsection{Monotone cellular automata}

The following very significant generalization of bootstrap percolation on $\Z_n^d$ was recently introduced by Bollob\'as, Smith and Uzzell~\cite{BSU}: given an arbitrary finite collection $\U = \{ X_1,\ldots,X_m \}$ of finite subsets of $\Z^d\setminus\{\0\}$, and a set $A \subset \Z_n^d$ of initially infected sites, set $A^{(0)} = A$ and
$$A^{(i+1)} \, = \, A^{(i)} \cup \big\{ v \in \Z_n^d \,:\, v + X \subset A^{(i)} \text{ for some } X \in \U \big\}$$
for each $i \ge 0$. Thus, a site $v$ becomes infected at step $i+1$ if the translate by $v$ of one of the sets of the update family $\U$ is already entirely infected at step $i$, and infected sites remain infected forever. It was shown in~\cite{BBPS,BSU} that in two dimensions one of the following three possibilities holds:\footnote{Here $p_c(\Z_n^2,\U)$ is defined to be the infimum over $p$ such that a $p$-random set percolates under the $\U$-bootstrap process on $\Z_n^2$ with probability at least $1/2$, cf.~\eqref{def:pc:r:boot}.}
\begin{itemize}
\item $\U$ is `supercritical' and $p_c(\Z_n^2,\U) = n^{-\Theta(1)}$.\medskip
\item $\U$ is `critical' and $p_c(\Z_n^2,\U) = (\log n)^{-\Theta(1)}$; \medskip 
\item $\U$ is `subcritical' and $\displaystyle \liminf_{n\to\infty} \, p_c(\Z_n^2,\U) > 0$;  
\end{itemize}
Furthermore, in~\cite{BDMS} the value of $p_c(\Z_n^2,\U)$ was determined up to a constant factor for all critical update families. 

Motivated by these results, it is natural to define the following nucleation and growth process for arbitrary update rules.

\begin{defn}[A random monotone cellular automaton]
Given an arbitrary family $\U_1,\ldots,\U_r$ of update rules in $\Z^d$ (i.e., each $\U_j$ is a finite collection of finite subsets of $\Z^d\setminus\{\0\}$), and functions $1 \le k_1(n) \le \ldots \le k_r(n) = n$, let a site $v \in \Z^d$ be infected at rate $k_\ell(n)/n$ at time $t$ if 
$$\ell \, = \, \max\big\{ j \in [r] : v + X \subset A_t \text{ for some } X \in \U_j \big\},$$ 
where $A_t$ is the set of infected sites at time $t$, and at rate $1/n$ otherwise. 
\end{defn}

We expect the following problem to already be extremely difficult.

\begin{prob}
Determine the relaxation time of an arbitrary random monotone cellular automaton up to a constant factor when $d = r = 2$. 
\end{prob}

As a first step, it would be interesting to understand the rough behaviour of the relaxation time when $\U_1$ is a supercritical update family and $\U_2$ is critical.


\end{document}